\newcommand{\PP}{\mathbb{P}}
\newcommand{\FF}{\mathcal{F}}
\newcommand{\BB}{\mathcal{B}}
\newcommand{\LL}{\mathcal{L}}
\newcommand{\EEE}{\mathcal{E}}
\newcommand{\SSS}{\mathcal{S}}
\newcommand{\MM}{\mathcal{M}}
\newcommand{\RR}{\mathbb{R}}
\newcommand{\EE}{\mathbb{E}}
\newcommand{\QQ}{\mathbb{Q}}
\newcommand{\NN}{\mathbb{N}}
\newcommand{\set}{\SSS^{\infty}\times \LL^2(B)\times \LL^2(\widetilde{\mu})}
\newcommand\abs[1]{\left\vert {#1} \right\vert}
\newcommand{\filt}{\mathbb{F}}
\newcommand{\essinf}{\mathop{\mbox{ess inf}}}
\newcommand{\esssup}{\mathop{\mbox{ess sup}}}
\newcommand{\mal}{\stackrel{\mbox{\hspace{-4pt}\tiny{\tiny$\bullet$}}\hspace{-4pt}}{}}
\newcommand{\footremember}[2]{
    \footnote{#2}
    \newcounter{#1}
    \setcounter{#1}{\value{footnote}}
}
\DeclareRobustCommand{\partialup}{\text{\rotatebox[origin=t]{20}{\scalebox{0.95}[1]{$\partial$}}}\hspace{-1pt}}
\newtheorem{theorem}{Theorem}[section]
\newtheorem{definition}[theorem]{Definition}
\newtheorem{proposition}[theorem]{Proposition}
\newtheorem{lemma}[theorem]{Lemma}
\newtheorem{example}[theorem]{Example}
\newtheorem{corollary}[theorem]{Corollary}
\newtheorem{remark}[theorem]{Remark}
\numberwithin{equation}{section}
\begin{document}

%\date{this version \today}

\title{On the monotone stability approach to BSDEs with jumps: Extensions, concrete criteria and examples
\footnote{For the final publication, please refer to \textit{Frontiers in Stochastic Analysis - BSDEs, SPDEs and their Applications}, Springer, 2019, 
\href{https://doi.org/10.1007/978-3-030-22285-7_1}{DOI {10.1007/978-3-030-22285-7\_1}}}
}
\author{
  Dirk Becherer\footremember{HU}{Institut f\"ur Mathematik, Humboldt-Universit\"at zu Berlin,  Unter den Linden 6, D-10099 Berlin, Germany.
  %\newline Email: becherer@\,math.hu-berlin.de. 
 } \and Martin B\"uttner
  \and Klebert Kentia\footremember{GU}{Institut f\"ur Mathematik, Goethe-Universit\"at Frankfurt, D-60054 Frankfurt am Main, Germany.
\newline 
Emails: kentia\,(at)\,aims.ac.za,  becherer\,(at)\,math.hu-berlin.de. 
% This version is from \today
}
}
\maketitle
\begin{abstract}
We show a concise extension of the monotone stability approach to backward stochastic differential equations (BSDEs)
that are jointly driven  by a Brownian motion and a random measure of jumps, which could be of infinite activity with a non-deterministic and time-inhomogeneous compensator.
The BSDE generator function can be non-convex  and needs not satisfy global Lipschitz conditions  in the jump integrand. 
We  contribute concrete sufficient criteria, 
that are easy to verify, for results on existence and uniqueness of bounded solutions to BSDEs with jumps, and on comparison and a-priori $L^\infty$-bounds.
Several examples and counter examples are discussed to shed light on the scope and applicability of different assumptions,
 and we provide an overview of major applications in finance and optimal control.

\end{abstract}

\noindent{\bf Keywords}: Backward stochastic differential equations, random measures,  monotone stability, L\'evy processes, step processes, utility maximization, entropic risk measure, good deal valuation bounds
      
\noindent{\bf MSC2010}: 60G57, 60H20, 93E20, 60G51, 91G80

\section{Introduction} \label{sec:intro}
We study bounded solutions $(Y,Z,U)$ to backward stochastic differential equations  with jumps
\[
Y_t = \xi +\int_t^T f_s(Y_{s-},Z_s,U_s)\, {\rm d}s-\int_t^T Z_s\, {\rm d}B_s-\int_t^T \mskip-10mu \int_E U_s(e)\, \widetilde{\mu}({\rm d}s,{\rm d}e)\,,
\]
which are jointly driven by a Brownian motion $B$ and a compensated random measure $\widetilde{\mu}=\mu-\nu^{\PP}$ of some integer-valued random measure $\mu$ 
on a probability space $(\Omega,\cal{F},\PP)$. This  is an extension of the classical BSDE theory on Wiener space towards BSDEs which involve jumps (JBSDEs), that are driven by the compensated random measure  $\widetilde{\mu}$, and do evolve
on non-Brownian filtrations. Such JBSDEs  do involve an additional stochastic integral with respect to the compensated jump measure $\widetilde{\mu}$
whose integrand $U$, differently from $Z$, typically  takes values in an infinite dimensional function space instead of an Euclidean space.

%%%% 
Comparison theorems for BSDEs with jumps require more delicate technical conditions than in the Brownian case, see \cite{BarlesBuckdahnPardoux97,Royer06,CohenElliott10}.
The starting point for our article will be a slight generalization of  
the seminal $({\rm\bf A}_{\mathbf\gamma})$-condition for comparison due to \cite{Royer06}. 
Our first contribution are extensions of comparison, existence and uniqueness results for bounded solutions of 
JBSDEs to the case of  infinite jump activity for a family (\ref{generator1}) of generators, that do not need to be Lipschitz in the $U$-argument.
This shows how the monotone stability 
approach to BSDEs with jumps, pioneered by \cite{Morlais09} for one particular generator, permits for a concise proof 
in a setting, that may be of particular appeal in a pure jump case without a Brownian motion, see Corollary~\ref{cor:JBSDEZzero}. 
While the strong approximation step for this ansatz is usually laborious, we present a compact proof with a $\mathcal{S}^1$-closedness argument and more generality of the generator in the $U$-argument for infinite activity of jumps. 
 To be useful towards applications, our second contribution are sufficient concrete criteria for comparison and wellposedness 
 that are comparably easy to verify in actual examples, 
 because they are formulated in terms of concrete properties for generator functions $f$ from a given family (\ref{generator1}) w.r.t.\ to basically Euclidean arguments,
  instead of assuming inequalities to hold for rather abstract random processes or fields. 
  This is the main thrust for the sufficient conditions of the comparison results 
  in Section~\ref{sec:comp} (see Theorem~\ref{comparetheo} and Proposition~\ref{estimatetheo}, 
  compared to Proposition~\ref{comparegeneral} or the result by \cite{Royer06} and respective enhancements \cite{QuenezSulem13,KrusePopier16,Yao17}) 
  and of the 
wellposedness Theorem~\ref{infinitetheo} (in comparison to Theorem~\ref{infinitetheogeneral}, whose conditions are more general but more abstract). 
A third contribution are the many examples and applications which illustrate the scope and applicability of our results and of the, often  
technical,  assumptions that are needed for JBSDE results in the literature.
Indeed, the range of the imposed combinations of several technical assumptions is often not immediately clear.
We believe that more discussion of examples and counter examples may help to shed light on the scope and the differences of some assumptions prevailing in the literature, 
and might also caution against possible pitfalls.

The approach in this paper can be described in more detail as follows: The comparison results will provide basic a-priori estimates on the $L^\infty$-norm for the $Y$-component of the JBSDE solution.
This step enables a quick intermediate result on existence and uniqueness for JBSDEs with finite jump activity.
To advance from here to infinite activity, we approximate the generator $f$ by a monotone sequence of generators for which solutions do exist, extending 
the monotone stability approach from \cite{Kobylanski00} and (for a particular JBSDE) \cite{Morlais09}. 
For the present paper, the compensator $\nu(\omega,{\rm d}t,{\rm d}e)$ of $\mu(\omega,{\rm d}t,{\rm d}e)$ can be stochastic 
and does not need to be a product measure like $\lambda({\rm d}e)\otimes {\rm d}t$, as it would be natural e.g.\ in a L\'evy-process 
setting,  but it is allowed to be inhomogeneous in that it can vary predictably with $(\omega, t)$. 
In this sense, $\nu$  is only assumed to be absolutely continuous to some reference product measure $\lambda\otimes {\rm d}t$ 
with $\lambda$ being $\sigma$-finite, see equation (\ref{nuzetadensity}). 
Such appears useful, but requires some care in the specification of generator properties in~Section~\ref{sec:prelim}.
For the filtration we assume that $\widetilde{\mu}$ jointly with $B$ (or alone) satisfies the  property of  weak predictable representation for martingales, see (\ref{WPRP}). 
As explained in Example~\ref{example_WPRP}, such setup permits for a range of stochastic dependencies between $B$ and $\widetilde{\mu}$,
which appear useful for modeling of applications, and encompasses many interesting driving noises for jumps in BSDEs; This includes L\'evy processes, Poisson random measures, marked point processes, (semi-)Markov chains or much more general step processes, 
connecting to a wide range of literature, e.g.\ \cite{CohenElliott10,CohenElliot10book,CFJ16,GeissLabart16,GeissS16,GeissS16ejp,BandiniC17}.

The literature on BSDE started with the classical study  \cite{PardouxPeng90} of square integrable solutions to BSDEs
driven solely by Brownian motion $B$ 
under global Lipschitz assumptions.
One important extension concerns generators $f$ which are non-Lipschitz but have quadratic growth in $Z$,  for which  \cite{Kobylanski00} derived bounded solutions by pioneering a monotone stability approach, and
\cite{Tevzadze08} by a fixed point approach. 
Square integrable solutions under global Lipschitz conditions for BSDEs with jumps from a Poisson random measures are first studied  by \cite{TangLi94,BarlesBuckdahnPardoux97}. 
%For square integrable solutions to JBSDEs with generators having a monotonicity in the $Y$-component, see \cite{Pardoux97,Royer06}.
%Bounded solutions to JBSDEs were obtained in \cite{Becherer06} for a (possibly non-homogeneous) random measure for jumps of finite activity and for generators $f$ that do not need to be uniformly Lipschitz in $U$. 
%
There is a lot of development in JBSDE theory recently. See for instance \cite{Bandini15, PapapantoleonEtal16, KrusePopier16,KrusePopier17,OuknineEF17} for 
results under global Lipschitz conditions
% (possibly stochastic)
 on the generator with respect to
on $(Z,U)$.
In the context of non-Lipschitz generators that are quadratic (also in $Z$, with exponential growth in $U$), JBSDEs have been studied to our knowledge at first by \cite{Morlais09} using a monotone stability approach for a specific generator that is related to
exponential utility, by \cite{ElKarouiMatoussiNgoupeyou16} using a quadratic-exponential semimartingale approach from \cite{BarrieuElKaroui13}, and by \cite{LaevenStadje14} or \cite{KPZ15} with again 
different approaches, relying on duality methods or, respectively, the fixed-point idea of \cite{Tevzadze08} for quadratic BSDEs.
For extensive surveys of the active literature with 
more references, let us refer to \cite{KrusePopier16,Yao17},  who contribute  results on $L^p$-solutions for generators, being monotone in the $Y$-component, that are very general in many aspects.
Their assumptions on the filtrations or generator's dependence on $(Y,Z)$ are for instance more general than ours. 
But the present paper still contributes on other aspects, noted above. For instance, \cite{Yao17} assumes finite activity of jumps and a Lipschitz continuity in $U$. More relations to some other related literature are being explained in many examples throughout the paper, see e.g.\ in Section~\ref{sec:appl}.
Moreover, it is fair to say that results in the JBSDE literature often  involve combinations of many technical assumptions; To understand the scope, applicability and differences of those assumptions, it appears helpful to discuss concrete examples and applications.

The paper is organized as follows. Section~\ref{sec:prelim} introduces the setting and mathematical background. In Sections~\ref{sec:comp}-\ref{sec:exandu}, we prove comparison results and show existence 
as well as uniqueness for bounded solutions to JBSDEs, both for finite and infinite activity of jumps. Last but not least, Section~\ref{sec:appl}  surveys key applications of JBSDEs in finance. We discuss several examples to shed  light on the scope of the results and of the underlying technical assumptions, and connections to the literature.

\section{Preliminaries}\label{sec:prelim}

This section presents the technical framework, sets  notations and discusses key conditions. 
First we recall essential facts on stochastic integration w.r.t.\  random measures and
on bounded solutions for Backward SDEs which are driven jointly by  Brownian motions and a compensated random measure.
 For notions from stochastic analysis not explained here we refer to \cite{JacodShiryaev03,HeWangYan92}.

Inequalities between measurable functions are understood almost everywhere w.r.t.\ 
an appropriate reference measure, typically $\PP$ or $\PP\otimes {\rm d}t$.
Let $T<\infty$ be a finite time horizon and $(\Omega, \mathcal{F}, (\mathcal{F}_t)_{0\leq t\leq T},\mathbb{P})$ a filtered probability space with a filtration $\filt= (\mathcal{F}_t)_{0\leq t\leq T}$ satisfying the usual conditions of right continuity and completeness, assuming $ \mathcal{F}_T= \mathcal{F}$ and $ \mathcal{F}_0$ being trivial (under $\mathbb{P}$);
Thus we can and do take all semimartingales to have right continuous paths with left limits, so-called c\`adl\`ag paths.
Expectations (under $\mathbb{P}$) are denoted by $\mathbb{E}=\mathbb{E}_{\mathbb{P}}$.
We will denote by ${\sf \bf A}^{T}$ the transpose of a matrix ${\sf \bf A}$ and simply write $\bm{xy}:={\bm x}^T{\bm y}$ for the scalar product for two vectors ${\bm x},{\bm y}$ of same dimensionality.
Let $H$ be a separable Hilbert space and denote by $\mathcal{B}(E)$ the Borel $\sigma$-field of $E:=H\backslash \{ 0\}$, e.g.\ $H=\RR^l$, $l\in \NN$ or $H=\ell^2\subset \RR^{\NN}$.
Then $(E,\mathcal{B}(E))$ is a standard Borel space.
In addition, let $B$ be a $d$-dimensional Brownian motion. Stochastic integrals of a vector valued predictable process $Z$ w.r.t.\  a semimartingale $X$, e.g.\  $X=B$, of the same dimensionality are scalar valued semimartingales starting at zero and denoted by
$\int_{(0,t]} Z {\rm d}X=\int_0^t Z {\rm d}X=Z\mal X_t$ for $t\in [0,T]$.
 The \textit{predictable} $\sigma$-field on $\Omega \times [0,T]$ (w.r.t.\ $ (\mathcal{F}_t)_{0\leq t\leq T}$)
is denoted
by $\mathcal{P}$ and $\widetilde{\mathcal{P}}:=\mathcal{P}\otimes \BB (E)$ is the respective $\sigma$-field on $\widetilde{\Omega}:=\Omega \times [0,T] \times E$.

Let $\mu$ be an integer-valued random measure with compensator $\nu =\nu^{\PP}$  (under $\PP$) 
which is taken to be absolutely continuous to $\lambda \otimes {\rm d}t$ for a $\sigma$-finite measure $\lambda$ 
on $(E,\mathcal{B}(E))$ satisfying $\int_E 1\wedge |e|^2 \lambda ({\rm d}e)<\infty$ with some $\widetilde{\mathcal{P}}$-measurable, bounded and non-negative density $\zeta$, such that
\begin{equation}
 \label{nuzetadensity} 
\nu ({\rm d}t,{\rm d}e)=\zeta (t,e)\, \lambda ({\rm d}e)\, {\rm d}t = \zeta_t\, {\rm d}\lambda \,{\rm d}t,
\end{equation} with $0\leq \zeta (t,e)\leq c_{\nu}$ $\PP\otimes \lambda\otimes {\rm d}t$-a.e.\  for some constant $c_{\nu}>0$.
 Note that $L^2(\lambda)$  and $L^2(\zeta_t{\rm d}\lambda)$ are separable Hilbert spaces since $\lambda$ (and  $\lambda_t:=\zeta_t\, {\rm d}\lambda$) is $\sigma$-finite and $\mathcal{B}(E)$ is finitely generated.
Since the density $\zeta$ can vary with $(\omega,t)$, the compensator $\nu$ can be time-inhomogeneous and stochastic. 
Such permits for a richer dependence structure for $(B,\widetilde{\mu})$;\ For instance, the intensity and distribution of jump heights could vary
according to some diffusion process. Yet,  it also brings a few technical complications, e.g.\  function-valued integrand processes $U$ from $\mathcal{L}^2(\widetilde{\mu})$ (as defined below) for the JBSDE need not take values in one given  $L^2$-space (for a.e.\  $(\omega,t)$), like e.g.\ $L^2(\lambda)$ if $\zeta\equiv1$, and the 
specifications of the domain and of the measurability for the generator functions should take account of such. 

For stochastic integration w.r.t.\   $\widetilde{\mu}$ and $B$  we define sets of $\mathbb{R}$-valued processes
\begin{align*}
\mathcal{S}^{p}&:=\mathcal{S}^{p}(\PP ):= \Big\{Y\,  \mbox{ c\`adl\`ag} \, :\, |Y |_{p}:= \Big\lVert \sup\limits_{0\leq t\leq T} |Y_t|\, \Big\rVert_{L^{p}(\PP)} < \infty  \Big\} \quad\mbox{for } p \in [1,\infty]\,,\\
\mathcal{L}^2(\widetilde{\mu}) &:= \Big\{ U\,   \mbox{ } \widetilde{\mathcal{P}}\mbox{-measurable} \, :\,   \lVert U \rVert_{\mathcal{L}^2(\widetilde{\mu})}^2 := \mathbb{E}\Big( \int_0^T \mskip-10mu \int_E |U_s(e)|^2\, \nu({\rm d}s,{\rm d}e) \Big) < \infty \Big\}\,,
\end{align*}
and the set of  $\mathbb{R}^{d}$-valued processes
\begin{align*}
\mathcal{L}^2(B) &:= \Big\{ \theta\, \,\mbox{ } \mathcal{P}\mbox{-measurable} \, :\,  \lVert \theta \rVert_{\mathcal{L}^2(B)}^2 := \mathbb{E}\Big( \int_0^T \lVert \theta_s \rVert^2\, {\rm d}s\, \Big) <\infty \Big\},
\end{align*}
where $\widetilde{\mu}=\widetilde{\mu}^{\PP}=\mu -\nu$ denotes the compensated measure of $\mu$ (under $\PP$).
Recall that for any predictable function $U$,  $\EE (|U|* \mu_T)=\EE (|U|*\nu_T)$ by the definition of a compensator. If $(|U|^2*\mu)^{1/2}$ is locally integrable, then $U$ is integrable w.r.t.\  $\widetilde{\mu}$, and $U*\widetilde{\mu}$ is defined as the purely discontinuous local martingale with jump process $\big(\int_E U_t(e)\, \mu(\{ t\} ,{\rm d}e)\big)_t$ by \cite[Def.II.1.27]{JacodShiryaev03} noting that $\nu$ is absolutely continuous to $\lambda \otimes {\rm d}t$.
For $Z\in \LL^2(B)$ and $U\in \LL^2(\widetilde{\mu})$ we recall that $Z\mal B$ and $U*\widetilde{\mu}= (U*\widetilde{\mu}_t)_{0\le t\le T}$ with $U*\widetilde{\mu}_t =\int_0^t \int_E U_s(e)\, \widetilde{\mu}({\rm d}s,{\rm d}e)$ are square integrable martingales by \cite[Thm.II.1.33]{JacodShiryaev03}. For $Z,Z'\in \LL^2(B)$ and $U,U'\in \LL^2(\widetilde{\mu})$ we have for the predictable quadratic covariations
that $\big\langle U*\widetilde{\mu},U'*\widetilde{\mu}\big\rangle_t=\int_0^t \mskip-5mu \int_E U_s(e)U_s'(e)\, \nu ({\rm d}s,{\rm d}e)$ by \cite[Thm.II.1.33]{JacodShiryaev03}, $\big\langle \int Z\, {\rm d}B, \int Z'\, {\rm d}B\rangle_t =\int_0^t Z_s^{T} Z_s'\, {\rm d}s$ and $\big\langle \int Z\, {\rm d}B, U*\widetilde{\mu}\big\rangle_t=0$ by \cite[Thm.I.4.2]{JacodShiryaev03}.
\\
We denote the space of square integrable martingales by $\mathcal{M}^2$ and its norm by $\lVert \cdot \rVert_{\mathcal{M}^2}$ with $\lVert M\rVert_{\mathcal{M}^2}=\EE (M_T^2)^{\sfrac{1}{2}}$. We recall \cite[Thm.10.9.4]{HeWangYan92} that the subspace of BMO($\PP$)-martingales ${\rm BMO}(\PP )$ contains  any square integrable martingale $M$ with uniformly bounded jumps and bounded 
conditional expectations for increments of the quadratic variation process:
\[
 \sup\limits_{0\leq t\leq T} \left\lVert\EE \big((M_T-M_t)^2\,|\,\FF_t\big)\right\rVert_{L^\infty(\PP)} = \sup\limits_{0\leq t\leq T} \left\lVert\EE \big(\langle M\rangle_T-\langle M\rangle_t\, |\,\FF_t\big)\right\rVert_{L^\infty(\PP)} \le {\rm const} < \infty.
\]
We will assume that the continuous martingale $B$ and the compensated measure $\widetilde{\mu}$ of an integer-valued random measure $\mu$ (or $\widetilde{\mu}$ alone, see
Example~\ref{example_WPRP}.1 and Corollary~\ref{cor:JBSDEZzero} with trivial $B=0$)
jointly have the weak predictable representation property (weak PRP) w.r.t.\  the filtration $(\mathcal{F}_t)_{0\leq t\leq T}$, in 
that every square integrable martingale $M$ has a (unique) representation, i.e.
\begin{equation}
\label{WPRP}
\text{for all} \:M\in {\cal{M}}^2 \;\text{there exists } Z,U \;\text{such that }\; M=M_0 +\int Z\, {\rm d}B+U*\widetilde{\mu}\,,
\end{equation}
with (unique) $Z \in\mathcal{L}^2(B)$ and $U\in \mathcal{L}^2(\widetilde{\mu})$.
Let us note that in the literature \cite[III.\S4c]{JacodShiryaev03} or \cite[XIII.\S2]{HeWangYan92} the weak representation property is defined as a decomposition like $(\ref{WPRP})$ for any local martingale $M$ with integrands $Z$, $U$ being integrable in the sense of local martingales. Such clearly implies our formulation above. Indeed, for a (locally) square integrable martingale $M$  in such a decomposition  both integrands must be at least locally square integrable and
$\langle M\rangle= \int |Z|^2\,{\rm d}t+|U|^2*\nu$ by strong orthogonality of the stochastic integrals. Then $E[\langle M\rangle_T]<\infty$ implies that $Z$, $U$ are in the respective $\mathcal{L}^2$-spaces. We exemplify how  (\ref{WPRP}) connects with a wide literature.

\begin{example}
\label{example_WPRP}
The  weak predictable  representation property  (\ref{WPRP}) holds in the cases below. Cases 1.-4.\ are well known from classical theory \cite{HeWangYan92}, see \cite[Example~2.1]{Becherer06} for details.
\begin{enumerate}\item[1.] Let $X$ be a L\'evy process with $X_0=0$ and predictable characteristics $(\alpha ,\beta ,\nu )$ (under $\PP$). 
Then the continuous martingale part $X^c$ (rescaled to a Brownian motion if $\beta\neq0$, or being trivial if $\beta=0$) and  the compensated jump measure $\widetilde{\mu}^X=\mu^X-\nu$ of $X$ have the weak PRP w.r.t.\  the usual filtration  $\filt^{X}$ generated by $X$. 
 An example for a  L\'evy process of infinite activity is the  Gamma process.
One can add that weak PRP even holds  in the sense of Thm~III.4.34 from \cite{JacodShiryaev03} for the more general class of
 PII-processes with independent increments. This class encompasses the more familiar L\'evy processes without requiring time-homogeneity or stochastic continuity.
\item[2.] Assume that $B$ and $\widetilde{\mu}$ satisfy (\ref{WPRP})  under $\PP$. 
Let $\PP'$ be an equivalent probability measure with density process $Z$. 
Then the Brownian motion $\nolinebreak{B':=B-\int (Z_-)^{-1}\, {\rm d}\langle Z,B\rangle}$ and $\widetilde{\mu}':=\mu-\nu^{\PP'}$ have the weak PRP (\ref{WPRP}) 
also w.r.t.\  $\PP'$ under the same filtration. This offers
plenty  of  scope  to construct examples where $W$ and $\widetilde{\mu}$ are not independent, based on other examples.
\item[3.] Let $B$ be a Brownian motion independent of a step process $X$ (in the sense of~\cite[Ch.\ 11]{HeWangYan92}). 
Then $B$ and $\widetilde{\mu}$, the compensated measure of the jump measure $\mu^X$ of $X$, 
have the weak PRP w.r.t.\  the usual filtration generated by $X$ and $B$. An example for a step process is a multivariate (non-explosive) point process, as appearing in \cite{CFJ16}.
\item[4.] A (semi-)Markov chain $X$, possibly time-inhomogeneous, is a step process.
Thus weak PRP (\ref{WPRP})  holds for a filtration generated by a Brownian motion and an independent Markov chain,
relating later results to literature \cite{CohenElliott10,ConfortolaFuhrman14,BandiniC17} on BSDEs driven by compensated random measures of the respective pure-jump (semi-)Markov processes.
Markov chains $X$ on countable state spaces can be chosen \cite{CohenElliott10} to take values in the set of unit vectors $\{e_i:i\in \NN\}$ of the sequence space $\ell^2\subset \RR^{\NN}$, with jumps $\Delta X$ taking values $e_i-e_j$, $i,j\in \NN$.
\item[5.] The pure jump martingale $U *\widetilde{\mu}$ (for $U \in \mathcal{L}^2(\widetilde{\mu})$) may be written as a series of mutually orthogonal martingales. More precisely, assume that the compensator coincides with the product measure $\lambda \otimes {\rm d}t$, i.e.\ $\zeta=1$. Let $(u^n)_{n\in \NN}$ be an orthonormal basis (ONB) of the separable Hilbert 
space $L^2(\lambda )$ with scalar product $\langle u,v\rangle :=\int_E u(e)v(e)\, \lambda ({\rm d}e)$. Let $U_t=\sum_{n\in \NN} \langle U_t,u^n\rangle u^n$ be the basis expansion of $U_t$ for $U\in \mathcal{L}^2(\widetilde{\mu})$, $t\le T$. Then it holds (in ${\cal M}^2$)
\begin{equation} \label{repformula}
U*\widetilde{\mu}= \sum_{n\in \NN} \int_0^{\cdot} \langle U_t,u^n\rangle \int_E u^n(e)\, \widetilde{\mu}({\rm d}t,{\rm d}e)
=:  \sum_{n\in \NN} \int_0^{\cdot} \alpha^n_t \,{\rm d}L^n_t=\sum_{n\in \NN} \alpha^n \mal L^n,
\end{equation}
for  $\alpha_t^n:=\langle U_t,u^n\rangle$ and $L^n:=u^n*\widetilde{\mu}$. Indeed, setting $F_t^n:=\sum_{k=1}^n \langle U_t,u^k\rangle u^k=\sum_{k=1}^n \alpha_t^k u^k$
one sees that
$\|\sum_{k=1}^\infty |\alpha^k|^2\|_{L^1(\PP\otimes {\rm d}t)} \le \lVert U\rVert_{\mathcal{L}^2(\widetilde{\mu})}^2 <\infty$.
By dominated convergence one obtains as $n\to \infty$
\begin{equation*}
\lVert F^n-U \rVert_{\mathcal{L}^2(\widetilde{\mu})}^2 = \EE \Big( \int_0^T \mskip-10mu \int_E |F_t^n(e)-U_t(e)|^2\lambda ({\rm d}e)\,{\rm d}t\Big) = 
\EE \Big( \int_0^T \sum_{k=n+1}^\infty |\alpha_t^k|^2\, {\rm d}t \Big) \rightarrow 0. 
\end{equation*}
Isometry implies that the stochastic integrals $F^n*\widetilde{\mu}$ converge to $U *\widetilde{\mu}$ in ${\cal M}^2$,
proving (\ref{repformula}).

In particular, we see how the  PRP $(\ref{WPRP})$ w.r.t.\  a random measure can be rewritten as series of ordinary stochastic integrals w.r.t.\  scalar-valued strongly orthogonal martingales $L^n$, which are in fact L\'evy processes with deterministic characteristics $(0,0,\int u^n(e)\, \lambda ({\rm d}e))$.
In this sense, the general condition $(\ref{WPRP})$ links well with results on PRP and BSDEs for L\'evy processes in~\cite{NualartSchoutens00,NualartSchoutens01}
who study a specific Teugels martingale basis consisting of compensated power jump processes for  L\'evy processes which satisfy exponential moment conditions. For a systematic analysis of related  PRP results, comprising  general L\'evy processes, see \cite{DiTellaEngelbert13,DiTellaE15}.
\item[6.] Previous arguments could extend to the general case with $\zeta \not\equiv 1$ in $(\ref{nuzetadensity})$. To this end, suppose  $U^n$ to be in ${\cal L}^2(\widetilde{\mu})$ such that for all $t\le T$ the sequence
  $(U_t^n)_{n\in \NN}$ is ONB of $L^2(\lambda_t)$  for ${\rm d}\lambda_t=\zeta_t {\rm d}\lambda$ with scalar product $\langle u,v \rangle_t :=\int_E u(e)v(e)\, \zeta (t,e)\lambda ({\rm d}e)$. Analogously to case 5.\ above,  with $\alpha_t^n:=\langle U_t,U_t^n\rangle_t$ and $L^n:=U^n*\widetilde{\mu}$ one gets equalities of martingales (in ${\cal M}^2$)
\begin{equation*}
U*\widetilde{\mu} = \sum_{n\in \NN} \int_0^{\cdot} \langle U_t,U_t^n\rangle_t \int_E U_t^n(e)\, \widetilde{\mu}({\rm d}t,{\rm d}e)
=: \sum_{n\in \NN} \alpha^n \mal L^n\,.
\end{equation*}
\end{enumerate}
\end{example}

To proceed, we now define a solution of the Backward SDE with jumps to be a triple $(Y,Z,U)$ of processes in the space $\mathcal{S}^{p} \times \mathcal{L}^2(B) \times \mathcal{L}^2(\widetilde{\mu})$ for a suitable $p\in (1,\infty ]$ that satisfies
\begin{equation}
Y_t = \xi + \int_{t}^{T} f_s(Y_{s-},Z_s,U_s)\, {\rm d}s-\int_{t}^{T}Z_s\, {\rm d}B_s -
\int_{t}^{T}\mskip-10mu \int_{E} U_s(e)\, \widetilde{\mu}({\rm d}s,{\rm d}e),\quad 0\leq t\leq T, \label{BSDE}
\end{equation} 
for given data $(\xi ,f)$, consisting of a $\FF_T$-measurable random variable $\xi$ and a  generator function $f_t(y,z,u)=f(\omega ,t,y,z,u)$. The values $p$ will be specified below in the respective results, although a particular focus will be on bounded BSDE solutions (i.e.\ $p=\infty$).
Because we permit $\nu$ to be time-inhomogeneous with a bounded but possibly non-constant density $\zeta$ in (\ref{nuzetadensity}), it does not hold in general that $U_t$ takes values a.e.\  in one space $L^2(\lambda)$  for $U\in  \mathcal{L}^2(\widetilde{\mu})$.
 This requires some extra consideration about the domain of definition and measurability of $f$, as the generator function $f$
needs to be defined 
for $u$-arguments from a suitable domain, which cannot be some fixed $L^2$-space  in general (and needs to be larger than $L^2(\lambda)$), as integrability of $u=U_t(\omega,\cdot)$ over $e\in E$ may vary with $(\omega,t)$. 
On suitable larger domains, one typically may have to admit for $f$ to attain non-finite values.
To this end, let us  denote by $L^0(\mathcal{B}(E),\lambda )$ the space of all $\mathcal{B}(E)$-measurable functions with the topology of convergence in measure and define
\begin{equation}\label{utnorm}
|u-u'|_t:=\Big( \int_E |u(e)-u'(e)|^2\, \zeta (t,e)\, \lambda ({\rm d}e) \Big)^{\frac{1}{2}},
\end{equation}
 for functions $u,u'$ in $L^0(\mathcal{B}(E),\lambda )$.
Terminal conditions $\xi$ for BSDE considered in this paper will be taken to be square integrable  $\xi \in L^{2}(\FF_T)$ and often even as bounded $\nolinebreak{\xi \in L^{\infty}(\FF_T)}$. Generator functions $f: \Omega \times [0,T]\times \mathbb{R} \times \mathbb{R}^d \times L^0(\mathcal{B}(E),\lambda) \rightarrow \overline{\mathbb{R}}$ are always taken to be $\mathcal{P}\otimes \BB (\RR^{d+1})\otimes \BB (L^0(\BB (E),\lambda ))$-measurable. 
Main  Theorems $\ref{comparetheo}$ and $\ref{infinitetheo}$ are derived for families of generators having the form
\begin{align}
f_t(y,z,u):= \widehat{f}_t(y,z) + \int_A g_t(y,z,u(e),e) \zeta (t,e) \lambda ({\rm d}e)\, &\quad\mbox{(where finitely defined)}  \label{generator1}
\end{align}
and $f_t(y,z,u):=\infty$ elsewhere, or more specially (for a $g$-component not depending on $y,z$)
\begin{align}
f_t(y,z,u):= \widehat{f}_t(y,z) + \int_A g_t(u(e),e)\, \zeta (t,e)\, \lambda ({\rm d}e)\,& \quad \mbox{(where finitely defined)}  \label{generator}
\end{align}
and $f_t(y,z,u):=\infty$ elsewhere,
for a $\mathcal{B}(E)$-measurable set $A$ and component functions $\widehat{f}$, $g$ where $\nolinebreak{\widehat{f}: \Omega  \times [0,T] \times  \mathbb{R}^{1+d} \rightarrow  {\RR}}$ is $\mathcal{P}\otimes \mathcal{B}(\mathbb{R}^{d+1})$-measurable and $\nolinebreak{g: \Omega \times [0,T]\times  \mathbb{R}^{1+d} \times \mathbb{R} \times E \rightarrow \mathbb{R}}$ is $\mathcal{P}\otimes \mathcal{B}(\mathbb{R}^{d+2}) \otimes \mathcal{B}(E)$-measurable.
Clearly statements for generators of the form $(\ref{generator1})$ are also true for those of the (more particular) form $(\ref{generator})$.
(In)finite activity relates to generators with $\lambda (A)<\infty$ (respectively $\lambda (A)=\infty$).
A simple but useful technical Lemma clarifies how we can (and always will) choose a bounded representative for $U$ in a  BSDE solution $(Y,Z,U)$ with bounded $Y$. 

\begin{lemma} \label{basicprops2}
Let $(Y,Z,U)\in \mathcal{S}^{\infty} \times \mathcal{L}^2(B)\times \mathcal{L}^2(\widetilde{\mu})$ be a solution of some JBSDE $(\ref{BSDE})$ with data $(\xi, f)$.
 Then there exists a representative $U'$ of $U$, bounded pointwise by $2|Y|_{\infty}$, such that $U'=U$ in $\mathcal{L}^2(\widetilde{\mu})$ and $\mathbb{P}\otimes {\rm d}t$-a-e., and $(Y,Z,U')$ solves the BSDE $(\xi ,f)$.
 \end{lemma}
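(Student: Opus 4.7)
The idea is to pointwise truncate $U$ at level $K:=2|Y|_\infty$ and show the truncation differs from $U$ only on a set that is not charged by $\mu$, hence not by $\nu$, so that neither the stochastic integral nor the driver term is affected.

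First I would extract an a.s.\ pointwise bound for $U$ along the jumps of $\mu$. Reading off the jumps of both sides of the BSDE \eqref{BSDE} and using that $\int Z\,dB$ and $\int f_s\,ds$ are continuous, together with $\nu(\{t\}\times E)=0$ (from $\nu=\zeta\,\lambda\otimes dt$), one gets
\[
\Delta Y_t \;=\; \int_E U_t(e)\,\mu(\{t\},de).
\]
Since $\mu$ is integer-valued, at every jump time $\tau$ there is a unique mark $\beta_\tau\in E$ with $\mu(\{\tau\},\cdot)=\delta_{\beta_\tau}$, hence $\Delta Y_\tau=U_\tau(\beta_\tau)$. As $|Y_t|\vee|Y_{t-}|\le |Y|_\infty$, this forces $|U_\tau(\beta_\tau)|\le K$ at every jump time, $\PP$-a.s.

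Now define $U'_s(e):=\bigl((-K)\vee U_s(e)\bigr)\wedge K$. Then $U'$ is predictable, pointwise bounded by $K$, and the predictable set $G:=\{(\omega,s,e):|U(\omega,s,e)|>K\}\subset\widetilde{\Omega}$ on which $U\ne U'$ satisfies $\mathbf{1}_G * \mu \equiv 0$ $\PP$-a.s.\ by the previous step. Applying the defining property of the compensator to the non-negative predictable indicator $\mathbf{1}_G$ yields $\EE(\mathbf{1}_G*\nu_T)=\EE(\mathbf{1}_G*\mu_T)=0$, so $\nu(G)=0$ $\PP$-a.s., i.e.\ $U=U'$ $\nu$-a.e. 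By Fubini and $\nu=\zeta\,d\lambda\,dt$ this translates to $|U_s-U'_s|_s=0$ for $\PP\otimes ds$-a.e.\ $(\omega,s)$ in the sense of \eqref{utnorm}, which is exactly the asserted equivalence in $\mathcal{L}^2(\widetilde{\mu})$ and $\PP\otimes dt$-a.e.

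It remains to verify that $(Y,Z,U')$ solves the same BSDE. Since $U=U'$ $\nu$-a.e., the stochastic integrals $U*\widetilde{\mu}$ and $U'*\widetilde{\mu}$ coincide as elements of $\mathcal{M}^2$, and the driver satisfies $f_s(Y_{s-},Z_s,U_s)=f_s(Y_{s-},Z_s,U'_s)$ for $\PP\otimes ds$-a.e.\ $(\omega,s)$, because the generators of interest (in particular those of the forms \eqref{generator1}--\eqref{generator}) see the $u$-argument only through integration against $\zeta\,d\lambda$ and therefore do not distinguish $\nu$-a.e.\ equivalent integrands. The only mildly subtle step is the passage from the a.s.\ pointwise bound at the countably many random jump marks $(\tau_i,\beta_i)$ to $\nu$-negligibility of the whole predictable set $G$; this is precisely what the compensator identity $\EE(W*\mu_T)=\EE(W*\nu_T)$ for predictable $W\ge 0$ delivers, and it is the single nontrivial ingredient in what is otherwise a routine verification.
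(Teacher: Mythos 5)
Your proposal is correct and follows essentially the same argument as the paper: the jump identity $\Delta Y_t=\int_E U_t(e)\,\mu(\{t\},{\rm d}e)$ together with the single-mark structure of the integer-valued measure bounds $U$ by $2|Y|_{\infty}$ on the support of $\mu$, and the compensator identity $\mathbb{E}(W*\mu_T)=\mathbb{E}(W*\nu_T)$ for nonnegative predictable $W$ then shows the modification is $\nu$-negligible, which is exactly the paper's mechanism. The only (cosmetic) difference is the choice of representative: you truncate $U$ at $2|Y|_{\infty}$, while the paper takes $U'_t(e)=U_t(e)\mathds{1}_D(t)\mathds{1}_{\{\beta_t\}}(e)$, i.e.\ restricts $U$ to the jump graph; both are bounded by $2|Y|_{\infty}$, agree with $U$ in $\mathcal{L}^2(\widetilde{\mu})$, and leave the stochastic integral and the generator term unchanged.
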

\begin{proof}
We reproduce a brief argument sufficient to our general setting, similarly to e.g.\  \cite[Cor.1]{Morlais09}  or \cite[proof of Thm.3.5]{Becherer06}.
Use that $\mu(\omega,{\rm d}t,{\rm d}e) = \sum_{s\ge 0} \mathds{1}_{D}(\omega,s)\,\updelta_{(s,\beta_s(\omega))}({\rm d}t,{\rm d}e)$  for an optional $E$-valued process $\beta$ and a thin set $D$, since
 $\mu$ is an integer-valued random measure \cite[II.\S1b]{JacodShiryaev03}. Clearly the jump
$\Delta Y_t(\omega)=(Y_t-Y_{t-})(\omega)=\int_E U_t(\omega,e)\,\mu(\omega;\{t\},{\rm d}e)$ is  equal to
$\mathds{1}_D(\omega,t)U_t(\omega ,\beta_t(\omega ))$  and  bounded
by $2|Y|_{\infty}$. For $U'_t(\omega ,e):=U_t(\omega ,e)\mathds{1}_D(\omega ,t)\mathds{1}_{\{\beta_t\}}(e)$, we have $U_t(\omega ,\beta_t(\omega ))=U'_t(\omega ,\beta_t(\omega ))$ on $D$, and $\nolinebreak{\sum_{s\ge 0} \mathds{1}_D(\omega,s)|U_s-U'_s|^2(\omega ,\beta_s(\omega))=0}$ 
implies $E[  |U-U'|^2*\nu_T]= E[|U-U'|^2*\mu_T]=0$. Since $U=U'$ in $\mathcal{L}^2(\widetilde{\mu})$ and $U_t=U'_t$ in $L^0(\mathcal{B}(E),\lambda )$, the BSDE is solved by $(Y,Z,U')$.
\end{proof}
Under these conditions, we can and will take $U$ to be bounded by twice the norm of $Y$; Defining $|U|_{\infty}:=\esssup_{(\omega,t,e)} |U_t(e)|$ for $U\in \mathcal{L}^2(\widetilde{\mu})$ yields $|U|_{\infty}\leq 2|Y|_{\infty}$ for any bounded BSDE solution $(Y,Z,U)$.
The next lemma  notes that the stochastic integrals of bounded JBSDE solutions are BMO-martingales when some truncated generator function is bounded from above (below) by $+(-)\langle M\rangle$ for a BMO-martingale $M$; Moreover, their BMO-norms depend only on $|Y|_{\infty}$, the BMO-norm of $M$ and the horizon $T$. 
See \cite[Lem.1.3]{KentiaPhD} for details of the proof, 
and note that BMO-properties of  integrals of (bounded) BSDEs are of course a well-studied topic, cf.\ \cite{ManiaChik14} and references therein. 

\begin{lemma}   \label{BMOproperty}
Let $(Y,Z,U)\in\set$ be a bounded solution to the BSDE $(\xi ,f)$.
Assume there is $M\in{\rm BMO}(\PP )$ such that $\nolinebreak{\int_t^T f_s(Y_{s-},Z_s,U_s)\, {\rm d}s\leq \langle M\rangle_T-\langle M\rangle_t}$ or $-\int_t^T f_s(Y_{s-},Z_s,U_s)\, {\rm d}s$ $\leq \langle M\rangle_T-\langle M\rangle_t$.
Then $\int Z\, {\rm d}B$ and $U*\widetilde{\mu}$ are {\rm BMO}-martingales and their {\rm BMO}-norms (resp.\ $L^2$-norms) are bounded by a constant depending on $|Y|_{\infty}$ and $\lVert M\rVert_{{\rm BMO}(\PP )}$ (resp. on $|Y|_{\infty}$, $\lVert M\rVert_{\mathcal{M}^2}$).
\end{lemma}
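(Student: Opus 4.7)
The aim is to bound $\EE_\tau[\langle N\rangle_T-\langle N\rangle_\tau]$ uniformly in stopping times $\tau\le T$, where $N:=\int Z\,{\rm d}B+U*\widetilde{\mu}$. By strong orthogonality $\langle N\rangle=\int|Z|^2\,{\rm d}s+|U|^2*\nu$, so this bound gives BMO for both $\int Z\,{\rm d}B$ and $U*\widetilde{\mu}$; the $\mathcal{M}^2$-part is obtained by the same calculation at $\tau=0$.

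\textbf{Step 1: Itô's formula on $Y^2$.} Using $\mathrm{d}Y_s=-f_s\,{\rm d}s+Z_s\,{\rm d}B_s+\int_E U_s(e)\,\widetilde{\mu}({\rm d}s,{\rm d}e)$ together with the jump decomposition $\int_E U_s^2\,\mu=\int_E U_s^2\,\widetilde{\mu}+\int_E U_s^2\,\nu$, one obtains for any stopping time $\tau\le T$
\begin{equation*}
Y_\tau^2+\int_\tau^T |Z_s|^2\,{\rm d}s+\int_\tau^T\!\!\int_E |U_s|^2\,\nu({\rm d}s,{\rm d}e)=\xi^2+2\!\int_\tau^T\!\! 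Y_{s-}f_s\,{\rm d}s-2\!\int_\tau^T\!\! Y_{s-}Z_s\,{\rm d}B_s-\!\int_\tau^T\!\!\int_E(2Y_{s-}U_s+U_s^2)\,\widetilde{\mu}.
\end{equation*}

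\textbf{Step 2: True-martingale check.} Since $Y$ is bounded and, by Lemma~\ref{basicprops2}, $|U|_\infty\le 2|Y|_\infty$, the integrands $2Y_{s-}Z_s$ and $2Y_{s-}U_s+U_s^2$ lie in $\LL^2(B)$ and $\LL^2(\widetilde{\mu})$ respectively, so both stochastic integrals are genuine $L^2$-martingales that vanish under $\EE[\cdot\,|\,\FF_\tau]=:\EE_\tau[\cdot]$. Taking $\EE_\tau$ and using $|\xi|\le|Y|_\infty$ yields
\begin{equation*}
\EE_\tau\!\left[\langle N\rangle_T-\langle N\rangle_\tau\right]\le |Y|_\infty^2+2\,\EE_\tau\!\left[\int_\tau^T Y_{s-}f_s\,{\rm d}s\right].
\end{equation*}

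\textbf{Step 3: Bounding the generator term.} This is the main obstacle, since the hypothesis provides only a one-sided integrated bound on $f$ (say Case A: $\int_\tau^T f_s\,{\rm d}s\le \langle M\rangle_T-\langle M\rangle_\tau$; the other case is symmetric, applied to $-f$). Set $F_s:=\int_\tau^s f_r\,{\rm d}r$, so $F_\tau=0$, $F$ is continuous of finite variation, and $F_T=Y_\tau-Y_T+(N_T-N_\tau)$ by the BSDE. A Stieltjes integration by parts gives
\begin{equation*}
\int_\tau^T Y_{s-}f_s\,{\rm d}s=Y_T F_T+\tfrac12 F_T^2-\int_\tau^T F_{s-}\,{\rm d}N_s,
\end{equation*}
and $\int F_{s-}\,{\rm d}N_s$ is a true martingale because $|F_s|\le 2|Y|_\infty+|N_s-N_\tau|$ is in $L^2$ (by Doob applied to $N\in\mathcal{M}^2$). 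The hypothesis controls $F_T^+\le\langle M\rangle_T-\langle M\rangle_\tau$, while the BSDE representation bounds $F_T^-\le 2|Y|_\infty+(N_T-N_\tau)^-$; squaring and invoking the energy/Garsia-Neveu inequality $\EE_\tau[(\langle M\rangle_T-\langle M\rangle_\tau)^2]\le 2\|M\|_{\mathrm{BMO}}^4$ gives
\begin{equation*}
\EE_\tau\!\left[Y_T F_T+\tfrac12 F_T^2\right]\le C_1(|Y|_\infty,\|M\|_{\mathrm{BMO}})+|Y|_\infty\,\EE_\tau[|N_T-N_\tau|]+\tfrac{1}{4}\EE_\tau[(N_T-N_\tau)^2],
\end{equation*}
where the coefficient $\tfrac14$ is achieved by using Young's inequality $2ab\le\tfrac12 a^2+2b^2$ when bounding the cross-terms $|Y_T F_T^-|$ and the square $(F_T^-)^2$.

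\textbf{Step 4: Closing the loop.} Writing $x:=\EE_\tau[\langle N\rangle_T-\langle N\rangle_\tau]=\EE_\tau[(N_T-N_\tau)^2]$ and $\EE_\tau[|N_T-N_\tau|]\le x^{1/2}$, the combined estimate reads $x\le A+Bx^{1/2}+\tfrac12 x$ with $A,B$ depending only on $|Y|_\infty$ and $\|M\|_{\mathrm{BMO}}$. Absorbing the $\tfrac12 x$ to the left and solving the resulting quadratic inequality in $x^{1/2}$ produces a bound on $x$ depending only on $|Y|_\infty$, $\|M\|_{\mathrm{BMO}}$ and $T$, uniformly in $\tau$. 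This yields the BMO-norm bound. Taking instead $\tau=0$ and replacing $\|M\|_{\mathrm{BMO}}$ by $\|M\|_{\mathcal{M}^2}^2\ge \EE[\langle M\rangle_T]$ gives the asserted $\mathcal{M}^2$-bound by the very same reasoning.

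The hard part is precisely Step 3: because the hypothesis is an integrated (not pointwise) and one-sided estimate and $Y_{s-}$ changes sign, one cannot simply pull $Y_{s-}$ out of $\int Y_{s-}f_s\,{\rm d}s$; the integration by parts combined with the BSDE identity is what converts the one-sided control on $F_T^+$ into a usable bound on $\EE_\tau[F_T^2]$ at the cost of a small multiple of $x$ that can be absorbed.
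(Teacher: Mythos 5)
Steps 1--2 are fine, but Step 3 contains a genuine gap that invalidates the argument. The decisive inequality
\begin{equation*}
\EE_\tau\Big[Y_T F_T+\tfrac12 F_T^2\Big]\le C_1+|Y|_\infty\,\EE_\tau\big[|N_T-N_\tau|\big]+\tfrac14\,\EE_\tau\big[(N_T-N_\tau)^2\big]
\end{equation*}
is not a consequence of the bounds you list, and in fact it is (up to bounded terms) a restatement of the lemma's conclusion. Indeed, since $Y_T=\xi$ and $F_T=Y_\tau-\xi+(N_T-N_\tau)$, one has the exact identity $2Y_TF_T+F_T^2=(\xi+F_T)^2-\xi^2=(Y_\tau+N_T-N_\tau)^2-\xi^2$, so that with $\EE_\tau[N_T-N_\tau]=0$,
\begin{equation*}
2\,\EE_\tau\Big[\int_\tau^T Y_{s-}f_s\,{\rm d}s\Big]=Y_\tau^2-\EE_\tau[\xi^2]+\EE_\tau\big[(N_T-N_\tau)^2\big].
\end{equation*}
Plugging this into your Step 2 returns a tautology (the $x:=\EE_\tau[(N_T-N_\tau)^2]$ on both sides cancels), which shows that the It\^o-on-$Y^2$ plus integration-by-parts framework by itself carries no information; and comparing with the display above, your Step 3 inequality is equivalent to $\tfrac14 x\le C+|Y|_\infty x^{1/2}$, i.e.\ to the assertion to be proven. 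The sketched derivation cannot produce the coefficient $\tfrac14$: from $F_T^-\le 2|Y|_\infty+|N_T-N_\tau|$ the term $\tfrac12\EE_\tau[(F_T^-)^2]$ is, in the worst case, $\tfrac12\EE_\tau[(N_T-N_\tau)^2]$ minus lower-order terms, so the smallest coefficient obtainable this way is $\tfrac12(1+\epsilon)$, and after the factor $2$ from Step 2 one gets a coefficient $\ge 1$ in front of $x$, which cannot be absorbed. Young's inequality only redistributes the cross terms; it cannot shrink the leading quadratic term.

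The deeper issue is that you use the hypothesis only at the single time $\tau$ (via $F_T^+\le\langle M\rangle_T-\langle M\rangle_\tau$) together with $\EE_\tau[N_T-N_\tau]=0$; these facts alone do not bound $\EE_\tau[(N_T-N_\tau)^2]$: a martingale increment that is bounded above by a variable with bounded conditional moments but crashes downward with small probability has arbitrarily large conditional variance. A correct proof must exploit the assumption at \emph{all} intermediate times $t\in[\tau,T]$ (equivalently all stopping times), i.e.\ the pathwise domination $N_T-N_t\le 2|Y|_\infty+\langle M\rangle_T-\langle M\rangle_t$, in combination with the boundedness of the jumps $|U|\le 2|Y|_\infty$ from Lemma~\ref{basicprops2} (which you invoke only for integrability); this is where energy/John--Nirenberg-type or exponential-supermartingale arguments enter, and it is the content of the proof the paper defers to \cite[Lem.1.3]{KentiaPhD}. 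Note also a secondary problem: your $\mathcal{M}^2$-claim "by the very same reasoning at $\tau=0$" invokes $\EE[(\langle M\rangle_T)^2]$, which is controlled by $\lVert M\rVert_{{\rm BMO}}$ but not by $\lVert M\rVert_{\mathcal{M}^2}$ alone, so even granting Step 3 the $L^2$-part of the statement would not follow as asserted.
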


\section{Comparison theorems and a-priori-estimates}\label{sec:comp}

The stage for the main comparison Theorem~\ref{comparetheo} and the a-priori-$L^\infty$-estimate of Proposition~\ref{estimatetheo} in this section
is set by the next proposition.
Its line of proof follows the seminal Theorem~2.5 by\cite{Royer06}, with slight generalizations that are needed in the sequel. 
Just some details for the change of measure argument are elaborated a bit differently, 
measurable dependencies of the random field $\gamma$ are specified in more detail, 
and less is assumed on the generators. Instead of imposing specific conditions on the generators which imply existence of solutions, we only insist that we have solutions and impose a generalized $({\rm \bf A}_{\bm \gamma})$-condition as explained in Example~\ref{exagamma}.1.

\begin{proposition}  \label{comparegeneral} 
Let $(Y^i,Z^i,U^i)\in \mathcal{S}^{\infty}\times \mathcal{L}^2(B)\times \mathcal{L}^2(\widetilde{\mu})$ be solutions to the BSDE $(\ref{BSDE})$
for data  $(\xi_i, f_i)$,  $i=1,2$. Assume that  $f_2$ is 
Lipschitz continuous  w.r.t.\  $y$ and $z$.
Let $\gamma :\Omega \times [0,T]\times \RR^{d+3} \times E\rightarrow [-1,\infty )$ with $(\omega ,t,y,z,u,u',e)\mapsto \gamma_t^{y,z,u,u'}(e)$
be a $\mathcal{P}\otimes \BB (\RR^{d+3})\otimes \BB (E)$-measurable function such that for $\overline{\gamma} :=\gamma^{Y_{-}^2,Z^2,U^1,U^2}$ it holds
 \begin{equation}\label{gamma**}
 \begin{split}
&f_2(t,Y_{t-}^2,Z_t^2,U_t^1) - f_2(t,Y_{t-}^2,Z_t^2,U_t^2) \leq \int_E \overline{\gamma}_t(e)\, (U_t^1(e)-U_t^2(e))\, \zeta (t,e)\, \lambda ({\rm d}e),\ \PP\otimes {\rm d}t\text{-a.e.}\\
&\mbox{and the stochastic exponential $\mathcal{E}(\int \beta\, {\rm d}B+\overline{\gamma}*\widetilde{\mu})$ is a martingale  for $\beta$ from $(\ref{linearization})$.}
\end{split}
\end{equation}
 Then a comparison result holds, that means that the inequalities
$\xi_1\leq \xi_2$ and $f_1(t,Y_{t-}^1,Z_t^1,U_t^1)\leq f_2(t,Y_{t-}^1,Z_t^1,U_t^1)$, $\PP\otimes {\rm d}t\text{-a.e.}$, together imply $Y_t^1\leq Y_t^2$ for all $t\leq T$.
\end{proposition}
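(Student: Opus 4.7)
The plan is to follow the classical linearization-plus-change-of-measure strategy in the style of Royer, adapted to the time-inhomogeneous compensator $\nu=\zeta\,\lambda\otimes{\rm d}t$. Set $\delta Y:=Y^1-Y^2$, $\delta Z:=Z^1-Z^2$, $\delta U:=U^1-U^2$, $\delta\xi:=\xi_1-\xi_2\le 0$. Writing the BSDE for $\delta Y$, I would decompose the generator increment as a telescoping sum
\begin{align*}
&f_1(s,Y^1_{s-},Z^1_s,U^1_s)-f_2(s,Y^2_{s-},Z^2_s,U^2_s)\\
&\quad=\bigl[f_1(s,Y^1_{s-},Z^1_s,U^1_s)-f_2(s,Y^1_{s-},Z^1_s,U^1_s)\bigr]\\
&\qquad+\bigl[f_2(s,Y^1_{s-},Z^1_s,U^1_s)-f_2(s,Y^2_{s-},Z^1_s,U^1_s)\bigr]\\
&\qquad+\bigl[f_2(s,Y^2_{s-},Z^1_s,U^1_s)-f_2(s,Y^2_{s-},Z^2_s,U^1_s)\bigr]\\
&\qquad+\bigl[f_2(s,Y^2_{s-},Z^2_s,U^1_s)-f_2(s,Y^2_{s-},Z^2_s,U^2_s)\bigr],
\end{align*}
where the first bracket is nonpositive by hypothesis, the Lipschitz continuity of $f_2$ in $(y,z)$ produces bounded predictable processes $\alpha$ and $\beta$ with the second and third brackets equal to $\alpha_s\,\delta Y_{s-}$ and $\beta_s\,\delta Z_s$ respectively (set e.g.\ $\alpha_s=(f_2(\cdot,Y^1_{s-},\cdot)-f_2(\cdot,Y^2_{s-},\cdot))/\delta Y_{s-}$ on $\{\delta Y_{s-}\neq 0\}$ and $0$ otherwise, analogously for $\beta$), and the fourth bracket is dominated by $\int_E\overline\gamma_s(e)\,\delta U_s(e)\,\zeta(s,e)\,\lambda({\rm d}e)$ by the assumption \eqref{gamma**}. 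I will refer to this construction of $\beta$ as the linearization alluded to in \eqref{linearization}.

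Next, the martingale hypothesis on $\mathcal{E}(\int\beta\,{\rm d}B+\overline\gamma*\widetilde\mu)$ allows to define an equivalent measure $\QQ$ via this density. Girsanov's theorem for Brownian motion and for the compensated random measure (using $\overline\gamma\ge -1$ so that the new compensator $(1+\overline\gamma)\zeta\,\lambda\otimes{\rm d}t$ is well-defined) yields that $B^{\QQ}:=B-\int\beta_s\,{\rm d}s$ is a $\QQ$-Brownian motion and $\widetilde\mu^{\QQ}:=\widetilde\mu-\overline\gamma\,\zeta\,\lambda\otimes{\rm d}t$ is the $\QQ$-compensated jump measure. Combining the linearization with this change of measure, $\delta Y$ satisfies
\begin{equation*}
\delta Y_t\le\delta\xi+\int_t^T\alpha_s\,\delta Y_{s-}\,{\rm d}s-\int_t^T\delta Z_s\,{\rm d}B^{\QQ}_s-\int_t^T\!\!\int_E\delta U_s(e)\,\widetilde\mu^{\QQ}({\rm d}s,{\rm d}e).
\end{equation*}

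Finally, I multiply by the bounded integrating factor $\Gamma_t:=\exp(-\int_0^t\alpha_s\,{\rm d}s)$ and apply It\^o's formula to $\Gamma_t\,\delta Y_t$ to eliminate the linear $y$-term, obtaining an inequality of the form
\begin{equation*}
\Gamma_t\,\delta Y_t\le\Gamma_T\,\delta\xi-\int_t^T\Gamma_s\,\delta Z_s\,{\rm d}B^{\QQ}_s-\int_t^T\!\!\int_E\Gamma_s\,\delta U_s(e)\,\widetilde\mu^{\QQ}({\rm d}s,{\rm d}e).
\end{equation*}
Taking $\EE_{\QQ}[\cdot\mid\FF_t]$ on both sides and using $\delta\xi\le 0$ delivers $\delta Y_t\le 0$, i.e.\ $Y^1_t\le Y^2_t$. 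The main technical point is justifying that the two stochastic integrals on the right are true $\QQ$-martingales (not merely local); this follows from boundedness of $\delta Y$ and $\Gamma$ together with the martingale property assumed in \eqref{gamma**}, since boundedness of $\delta U$ (via Lemma~\ref{basicprops2}) and the BMO-type control from the linearization ensure sufficient integrability under $\QQ$ to apply the optional sampling argument. The remaining bookkeeping concerns only measurability of the quotient processes $\alpha,\beta$, which is routine once the random field $\gamma$ is chosen with the stated $\mathcal{P}\otimes\BB(\RR^{d+3})\otimes\BB(E)$-measurability.
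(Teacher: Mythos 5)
Your strategy is the same as the paper's: linearize $f_2$ in $(y,z)$ to get bounded $\alpha,\beta$ as in $(\ref{linearization})$, bound the $u$-increment by the $\overline{\gamma}$-integral from $(\ref{gamma**})$, change measure via $\mathcal{E}(\int \beta\, {\rm d}B+\overline{\gamma}*\widetilde{\mu})$, use an exponential integrating factor in $\alpha$, and condition under the new measure. Two points in your last step are not correct as written, and one of them is a genuine gap. First (minor): since $\gamma$ is only assumed to take values in $[-1,\infty)$, the density $\mathcal{E}(\int \beta\, {\rm d}B+\overline{\gamma}*\widetilde{\mu})_T$ may vanish, so $\QQ$ is in general only absolutely continuous with respect to $\PP$, not equivalent; the paper is explicit about this.

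Second, and substantively: your justification for dropping the two stochastic integrals after conditioning — that they are \emph{true} $\QQ$-martingales thanks to ``boundedness of $\delta U$ and the BMO-type control from the linearization'' — is not supported by the hypotheses of this proposition. Nothing in $(\ref{gamma**})$ or in the Lipschitz assumption on $f_2$ yields a BMO bound, nor any $\QQ$-integrability of $\int \Gamma\,\delta Z\, {\rm d}B^{\QQ}$ or of $(\Gamma\,\delta U)*\widetilde{\mu}^{\QQ}$: $\delta Z$ is only in $\mathcal{L}^2(B)$ under $\PP$, and the $\QQ$-compensator involves $(1+\overline{\gamma})\zeta$ with $\overline{\gamma}$ not assumed bounded or pointwise square-integrable, so true martingality cannot be asserted (boundedness of $\delta U$ via Lemma~\ref{basicprops2} does not rescue this). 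The paper never needs it: with $M:=\int R\widehat{Z}\,{\rm d}B+(R\widehat{U})*\widetilde{\mu}$ and $N:=\int \beta\,{\rm d}B+\overline{\gamma}*\widetilde{\mu}$, the process $L:=M-\langle M,N\rangle$ is merely a \emph{local} $\QQ$-martingale; one localizes along $\tau_n\uparrow\infty$, takes $\EE_{\QQ}(\,\cdot\,|\FF_t)$ so the localized martingale increments vanish, and then passes to the limit by dominated convergence, which is legitimate solely because $R\widehat{Y}$ is bounded ($Y^i\in\mathcal{S}^\infty$ and $\alpha$ bounded). Your mention of an ``optional sampling argument'' gestures at exactly this, so the gap is fixable, but as stated the true-martingale claim is the step that fails; replace it by the localization-plus-dominated-convergence argument, which requires no integrability of $\delta Z$ or $\delta U$ under $\QQ$ at all.
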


In results like the above, in \cite{Royer06} and further enchancements \cite{QuenezSulem13,KrusePopier16,Yao17}, the key assumption needed for comparison 
is the existence of an abstract random field $\gamma$ such that inequalities are satisfied between processes. 
In contrast, the subsequent results of this section offer sufficient criteria for comparison that can be verified more easily
 by checking concrete dependencies  w.r.t.\ to basically Euclidean arguments for generator functions $f$ of the type (\ref{generator1}). See also \cite{GeissSteinicke17} for a simpler version in a setting with  a jump measure of  L\'evy-type on $E=\RR^1\setminus\{0\}$ and  $\zeta\equiv1$ .
 %%%***
 %%%

\begin{proof}
We define  $\widehat{\xi}:=\xi_1-\xi_2$, $\widehat{Y}:=Y^1-Y^2$, $\widehat{Z}:=Z^1-Z^2$ and $\widehat{U}:=U^1-U^2$.
The processes
\begin{align} \label{linearization}
\alpha_s & := \mathds{1}_{\{Y_{s-}^1\neq Y_{s-}^2\}} \frac{f_2(s,Y_{s-}^1,Z_s^1,U_s^1)-f_2(s,Y_{s-}^2,Z_s^1,U_s^1)}{(Y_{s-}^1-Y_{s-}^2)}\,, \nonumber \\
\beta_s  &:=  \mathds{1}_{\{  Z_s^1\neq Z_s^2   \}} \frac{f_2(s,Y_{s-}^2,Z_s^1,U_s^1)-f_2(s,Y_{s-}^2,Z_s^2,U_s^1)}{\lVert Z_s^1-Z_s^2 \rVert^2} (Z_s^1-Z_s^2)
\end{align}
and $R_t :=\exp (\int_0^t \alpha_s\, {\rm d}s)$ are bounded due to the Lipschitz assumption on $f_2$.
As in \cite{Royer06}, applying It\^o's formula to $R\widehat{Y}$ between $\tau\wedge t$ and $\tau\wedge T$ for some stopping times $\tau$ yields
\begin{align*}
(R\widehat{Y})_{\tau\wedge t} &= (R\widehat{Y})_{\tau\wedge T}+\int_{\tau\wedge t}^{\tau\wedge T} R_s\big(f_1(s,Y_{s-}^1,Z_s^1,U_s^1)-f_2(s,Y_{s-}^2,Z_s^2,U_s^2)\big)\, {\rm d}s\\
&\quad -\int_{\tau\wedge t}^{\tau\wedge T} R_s\widehat{Z}_s\, {\rm d}B_s-\int_{\tau\wedge t}^{\tau\wedge T} \int_E R_s\widehat{U}_s(e)\, \widetilde{\mu}({\rm d}s,{\rm d}e)-\int_{\tau\wedge t}^{\tau\wedge T} R_s\alpha_s\widehat{Y}_{s-}\, {\rm d}s.
\end{align*}
Set $M:=\int R\widehat{Z}\, {\rm d}B+(R\widehat{U})*\widetilde{\mu}$ and $N:=\int \beta\, {\rm d}B+\overline{\gamma} *\widetilde{\mu}$.
Then ${\rm d}\QQ :=\EEE (N)_T{\rm d}\PP$ defines an absolutely continuous probability by the martingale property of the stochastic exponential $\mathcal{E}(N)\ge 0$;
cf.\ \cite[Lem.9.40]{HeWangYan92}. By Girsanov $L:=M-\langle M,N\rangle$ is a local $\QQ$-martingale,
and the inequality
\begin{align}\nonumber
\hspace{-0.2cm}f_1(s,Y_{s-}^1,Z_s^1,U_s^1)-f_2(s,Y_{s-}^2,Z_s^2,U_s^2) &\leq \alpha_s \widehat{Y}_{s-}+\beta_s \widehat{Z}_s+\int_E \overline{\gamma}_s(e)\widehat{U}_s(e)\,\zeta_s (e)\,\lambda ({\rm d}e)\, \PP \otimes {\rm d}s\text{-a.e.}\\
\text{implies} \quad   \label{esti}
(R\widehat{Y})_{\tau\wedge t}&\leq (R\widehat{Y})_{\tau\wedge T}-(L_T^{\tau}-L_t^{\tau}).
\end{align}
Localizing $L$ along a sequence of stopping times $\tau_n\uparrow \infty$ and taking conditional expectations, we obtain $\nolinebreak{\EE_{\QQ} \big((R\widehat{Y})_{t\wedge \tau^n}\,\big\lvert\,\FF_t\big)\leq \EE_{\QQ} \big((R\widehat{Y})_{\tau^n\wedge T}\,\big\lvert\,\FF_t \big)}$ for each $n\in \NN$.
Dominated convergence  yields the estimate $\nolinebreak{R_t\widehat{Y}_t\leq \EE_{\QQ} \big(R_T\widehat{\xi}\,\big\lvert\,\FF_t\big)\leq 0}$ and thus $Y_t^1\leq Y_t^2$.
\end{proof}

\begin{remark}
\begin{enumerate}\item[1.] Switching roles of $f_1$ and $f_2$, one gets that if $f_1$ is Lipschitz in $y$,$z$ and satisfies $(\ref{gamma**})$ instead of $f_2$, then 
$\xi_1\leq \xi_2$ and $f_1(t,Y_{t-}^2,Z_t^2,U_t^2)\leq f_2(t,Y_{t-}^2,Z_t^2,U_t^2)$ imply $Y_t^1\leq Y_t^2$.
\item[2.] The result of Proposition \ref{comparegeneral} remains valid (with a similar proof) if one requires that the $Y$-components of JBSDE solutions to compare are in $\mathcal{S}^2$
instead of $\mathcal{S}^\infty$, and the stochastic exponential $\mathcal{E}(\beta\mal B+\overline{\gamma}\ast\widetilde{\mu})$ is in $\mathcal{S}^2$. However, as it is stated, 
Proposition \ref{comparegeneral} is exactly what we will need to apply in the sequel to derive, e.g., Proposition~$\ref{finitetheo}$ and Theorem~$\ref{infinitetheo}$.
\end{enumerate}
\end{remark}

\begin{example} \label{exmartingale}
Sufficient conditions for $\EEE (\overline{\gamma}*\widetilde{\mu})$ to be a martingale are, for instance,
\begin{enumerate}
\item \label{exmartingale1} $\Delta (\overline{\gamma}*\widetilde{\mu})>-1$ and $\EE \big(\exp (\langle \overline{\gamma}*\widetilde{\mu} \rangle_T )\big) =\EE \big( \exp \big( \int_0^T\int_E |\overline{\gamma}_s(e)|^2\, \nu ({\rm d}s,{\rm d}e)\big) \big)<\infty$;  see~\cite[Thm.9]{ProtterShimbo08}.
This holds i.p.\ if $\int_E |\overline{\gamma}_s(e)|^2\, \zeta (s,e)\, \lambda ({\rm d}e)<const.<\infty$ $\PP \otimes {\rm d}s$-a.e.\  and $\overline{\gamma}>-1$.
\item \label{exmartingale2} $\Delta (\overline{\gamma}*\widetilde{\mu})\geq -1+\delta$ for  $\delta >0$ and $\overline{\gamma}*\widetilde{\mu}$ is a ${\rm BMO}(\PP )$-martingale due to Kazamaki \cite{Kazamaki79}.
\item \label{exmartingale3}  $\Delta (\overline{\gamma}*\widetilde{\mu})\geq -1$ and $\overline{\gamma}*\widetilde{\mu}$ is a uniformly integrable martingale and $\EE \big(\exp (\langle \overline{\gamma}*\widetilde{\mu} \rangle_T )\big)<\infty$; see~\cite[Thm.I.8]{LepingleMemin78}.
Such a condition is satisfied when $\overline{\gamma}$ is bounded and $\lvert \overline{\gamma}\rvert\le \psi,\ \PP\otimes {\rm d}t\otimes \lambda\text{-a.e.}$ for a function $\psi\in L^2(\lambda)$ and $\zeta\equiv 1$. The latter is what is 
required for instance in the comparison Thm.4.2 of \cite{QuenezSulem13}.
\end{enumerate}
Note that under above conditions, also the stochastic exponential $\EEE (\int \beta {\rm d}B+\overline{\gamma}*\widetilde{\mu})$
 for $\beta$  bounded and predictable  is a martingale, as it is easily seen by Novikov's criterion. 

 Let us also  refer to  \cite[Sections 19 and A.9]{CohenElliot10book} for related so-called balance conditions on generators for JBSDE comparison by change of measure arguments.
%%%***
%%%
 \end{example}
In the statement of Proposition \ref{comparegeneral}, the dependence of the process $\overline{\gamma}$ on the BSDE solutions is not needed for the proof as the same result holds if $\overline{\gamma}$
is just a predictable process such that the estimate on the generator $f_2$ and the martingale property (\ref{gamma**}) hold. The further functional dependence is needed for the sequel, as 
required in the following
\begin{definition} \label{agammadef}
We say that an $\overline{\RR}$-valued generator function $f$ satisfies condition ${\bf ({\rm \bf A}_{\bm \gamma})}$ if there is a $\mathcal{P}\otimes \BB (\RR^{d+3})\otimes \BB (E)$-measurable function $\gamma :\Omega \times [0,T]\times \RR^{d+3} \times E\rightarrow (-1,\infty )$ given by $(\omega ,t,y,z,u,u',e)\mapsto \gamma_t^{y,z,u,u'}(e)$ such that for all $(Y,Z,U,U')\in \mathcal{S}^{\infty}\times \mathcal{L}^2(B)\times (\mathcal{L}^2(\widetilde{\mu}))^2$ with $|U|_{\infty}< \infty$, $|U'|_{\infty}<\infty$ it holds for $\overline{\gamma}:=\gamma^{Y_-,Z,U,U'}$
\begin{equation} \label{gamma*}
\begin{split}
&f_t(Y_{t-},Z_t,U_t)\! -\! f_t(Y_{t-},Z_t,U'_t)\! \leq\! \int_E\! \overline{\gamma}_t(e)(U_t(e)-U'_t(e)) \zeta (t,e) \lambda ({\rm d}e),\, \PP\otimes {\rm d}t\text{-a.e.}\\ 
&\mbox{and $\mathcal{E}(\int \beta {\rm d}B+\overline{\gamma}*\widetilde{\mu})$ is a martingale for every bounded and predictable $\beta$.}
\end{split}
\end{equation}
We will say that $f$ satisfies condition $({\rm \bf A}'_{\bm \gamma})$ if the above holds for all bounded $U$ and $U'$ with additionally $U*\widetilde{\mu}$ and $U'*\widetilde{\mu}$ in ${\rm BMO}(\PP )$.
\end{definition}
Clearly, existence and applicability of a suitable comparison result for solutions to JBSDEs implies their uniqueness. In other words, if there exists a bounded solution for a
generator being Lipschitz w.r.t.\ $y$ and $z$ which satisfies $({\rm \bf A}_{\bm \gamma})$ or $({\rm \bf A}'_{\bm \gamma})$, we obtain that such a solution is unique.
\begin{example}  \label{exgamma}
The natural candidate for $\gamma$ for generators $f$ of the form $(\ref{generator1})$ is given by 
\begin{align}
\gamma_s^{y,z,u,u'}(e)=\frac{g_s(y,z,u,e)-g_s(y,z,u',e)}{u-u'}\ \mathds{1}_A(e)\,\mathds{1}_{\{u\neq u'\}},  \label{gammaform}
\end{align}
which is $\mathcal{P}\otimes \BB (\RR^{d+3} )\otimes \BB (E)$-measurable since $g$ is.
Assuming  absolute continuity of $g$ in $u$, we can express $\gamma_s^{y,z,u,u'}(e)= \int_0^1 \frac{\partialup}{\partialup u}g_s(y,z,tu+(1-t)u',e)\, {\rm d}t\, \mathds{1}_A(e)$, by noting that
\begin{equation*}
(u-u')\int_0^1 \frac{\partialup}{{\partialup} u}g_s(y,z,tu+(1-t)u',e)\, {\rm d}t\, \mathds{1}_A(e) = \int_0^1 \frac{\partialup}{\partialup t}\left[(g_s(y,z,tu+(1-t)u',e))\right]\, {\rm d}t\, \mathds{1}_A(e).
\end{equation*}
For generators of type $(\ref{generator})$ the $\gamma$ simply is
\( \gamma_s^{y,z,u,u'}(e)=\int_0^1 \frac{\partialup}{\partialup u}g_s(tu+(1-t)u',e)\, {\rm d}t\, \mathds{1}_A(e)\,.
\)
\end{example}

\begin{definition} \label{defAfininfi}
We say that a generator $f$ satisfies condition $({\rm \bf A}_{\rm \bf fin})$ or $({\rm \bf A}_{\rm \bf infi})$   (on a set $D$) if
\begin{enumerate}
\item $({\rm \bf A}_{\rm \bf fin})$: $f$ is of the form $(\ref{generator1})$ with $\lambda (A)<\infty$, is Lipschitz continuous w.r.t.\ $y$ and $z$ uniformly in $(t,\omega,u)$, and the map $u \mapsto g(t,y,z,u,e)$ is 
absolutely continuous (in $u$) for all $(\omega ,t,y,z,e)$ (in $D\subseteq \Omega \times [0,T]\times \RR \times \RR^d \times E$), i.e.\ $g(t,y,z,u,e) = g(0)+\int_0^ug'(t,y,z,x,e){\rm d}x$, with  density function $g'$ being strictly greater than $-1$ (on $D$)
and locally bounded (in u) from above, uniformly in $(\omega ,t,y,z,e)$.

\item $({\rm \bf A}_{\rm \bf infi})$: $f$ is of the form $(\ref{generator})$, is Lipschitz continuous w.r.t.\ $y$ and $z$ uniformly in $(t,\omega,u)$, and the map $u \mapsto g_t(u,e)$ is 
absolutely continuous (in $u$) for all $(\omega ,t,e)$ (in $D$), i.e.\ $g(t,u,e) = g(0)+\int_0^ug'(t,x,e){\rm d}x$, with  density function $g'$ being such that 
for all $c\in(0,\infty)$ there exists $K(c)\in\mathbb{R}$ and $\delta(c)\in(0,1)$ with 
$-1+\delta(c)\le g'(x)$ and $\lvert g'(x)\rvert\le K(c)\lvert x\rvert$ for all $x$ with $\lvert x\rvert \le c.$
\end{enumerate}
\end{definition}
\begin{remark}
Note that under condition $({\rm \bf A}_{\rm \bf infi})$ the density function $g'$ is necessarily locally bounded, in particular with $\lvert g'(x)\rvert\le K(c)c=:\bar{K}(c)<\infty$ for all $x\in[-c,c]$. Observe that the conditions are not requiring the function $g$ to be  convex and moreover refrain from requiring it to be continuously differentiable in $u$. Both can be helpful in application examplres, see 
Section~\ref{subsubsec:CaseDiscAssetPrice}.
 
\end{remark}

\begin{example} Sufficient conditions for condition $({\rm \bf A}_{\bm \gamma})$ and $({\rm \bf A}'_{\bm \gamma})$ are \label{exagamma}
\begin{enumerate}\item[1.] $\gamma$ is a $\mathcal{P}\otimes \mathcal{B}(\RR^{d+3})\otimes \mathcal{B}(E)$-measurable function satisfying the inequality in $(\ref{gamma*})$ and
\[
C_1(1\wedge |e|)\leq \gamma_t^{y,z,u,u'}(e)\leq C_2(1\wedge |e|)
\]
on $E=\RR^l\setminus\{ 0\}$ ($l\in\NN$), for some $C_1\in (-1,0]$ and $C_2>0$. In this case $\exp (\langle \int \beta {\rm d}B+\overline{\gamma} *\widetilde{\mu} \rangle_T )$ is clearly
bounded and the jumps of $\int \beta {\rm d}B+\overline{\gamma} *\widetilde{\mu}$ are bigger than $-1$. Hence $\mathcal{E}\left(\int \beta {\rm d}B+\overline{\gamma} *\widetilde{\mu}\right)$
is a positive martingale \cite[Thm.9]{ProtterShimbo08}. Thus Definition~\ref{agammadef} generalizes the original $({\rm \bf A}_{\bm \gamma})$-condition introduced by \cite{Royer06} for Poisson random measures.
\item[2.] $({\rm \bf A}_{\rm \bf fin})$ is sufficient for $({\rm \bf A}_{\bm \gamma})$. This follows from Example~\ref{exmartingale}.\ref{exmartingale1}, $(\ref{gammaform})$ and $\lambda (A)<\infty$.
\item[3.] $({\rm \bf A}_{\rm \bf infi})$ is sufficient for $({\rm \bf A}'_{\bm \gamma})$. To see this, let $u,u'$ be bounded by $c$ and $\gamma$ be the natural candidate in Example \ref{exgamma}. 
Then $|\gamma_s^{y,z,u,u'}(e)|\leq \int_{u'}^u|g'(x)|{\rm d}x/(u-u')\le K(c)(|u|+|u'|)$. Hence $\int \beta {\rm d}B+\overline{\gamma}*\widetilde{\mu}$ is a BMO-martingale by the BMO-property of $U*\widetilde{\mu}$ and $U'*\widetilde{\mu}$ with 
some lower bound $-1+\delta$ for its jumps. And $\mathcal{E}(\int \beta {\rm d}B+\overline{\gamma}*\widetilde{\mu})$ is a martingale by 
 part~\ref{exmartingale2} of Example~$\ref{exmartingale}$.
\item[4.] Condition $({\rm \bf A}_{\rm \bf fin})$ above is satisfied if, e.g., $f$ is of the form $(\ref{generator1})$ with $\lambda (A)<\infty$, is Lipschitz continuous w.r.t.\ $y$ and $z$, and the map $u \mapsto g(t,y,z,u,e)$ is continuously
differentiable for all $(\omega ,t,y,z,e)$ (in $D$) such that the derivative is strictly greater than $-1$ (on $D\subseteq \Omega \times [0,T]\times \RR \times \RR^d \times E$)
and locally bounded (in $u$) from above, uniformly in $(\omega ,t,y,z,e)$.
\item[5.] Condition $({\rm \bf A}_{\rm \bf infi})$ is valid if for instance $f$ is of the form $(\ref{generator})$, is Lipschitz continuous w.r.t.\ $y$ and $z$, and the map $u \mapsto g_t(u,e)$ is twice continuously differentiable for all
$(\omega ,t,e)$ with the derivatives being locally bounded uniformly in $(\omega ,t,e)$, the first derivative being 
(locally)
bounded away from $-1$ with a lower bound $-1+\delta$ for some $\delta >0$,
and $\frac{\partialup g}{\partialup u}(t,0,e)\equiv 0$.
\end{enumerate}
\end{example}

As an application of the above, we can now provide simple  conditions for comparison in terms of concrete properties of the generator function, which are easier to verify than the more general but abstract conditions on the existence of a suitable function $\gamma$ as in Proposition~\ref{comparegeneral} or the general conditions by \cite{CohenElliott10}.
Note that no convexity is required in the $z$ or $u$ argument of the generator. The result will be applied later to prove existence and uniqueness of JBSDE solutions. 

\begin{theorem}[Comparison Theorem] \label{comparetheo}
A comparison result between bounded BSDE solutions in the sense of Proposition~\ref{comparegeneral} holds true in each of the following cases:
\begin{enumerate}
\item \emph{(finite activity)} $f_2$ satisfies $({\rm \bf A}_{\rm \bf fin})$.
\item \emph{(infinite activity)} $f_2$ satisfies $({\rm \bf A}_{\rm \bf infi})$ and $U^1*\widetilde{\mu}$ and $U^2*\widetilde{\mu}$ are {\rm BMO}($\PP$)-martingales for the corresponding JBSDE solutions $(Y^1,Z^1,U^1)$ and $(Y^2,Z^2,U^2)$.
\end{enumerate}
\end{theorem}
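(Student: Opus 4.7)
My plan is to reduce both parts of the theorem to the general comparison result Proposition~\ref{comparegeneral} by exhibiting a suitable process $\overline{\gamma}$ satisfying the inequality (\ref{gamma**}) and the martingale property of the stochastic exponential. In both cases the natural candidate from Example~\ref{exgamma} will do the job: writing $f_2$ in the form (\ref{generator1}) resp.\ (\ref{generator}), define
\[
\overline{\gamma}_s(e) := \int_0^1 g'_s\big(Y^2_{s-},Z^2_s,\,t U^1_s(e)+(1-t)U^2_s(e),\,e\big)\,{\rm d}t\,\mathds{1}_A(e)
\]
(and analogously without $(y,z)$-dependence in case~2). By absolute continuity of $g$ in $u$, this $\overline{\gamma}$ automatically satisfies (in fact with equality) the key inequality on $f_2(\cdot,U^1)-f_2(\cdot,U^2)$ required in (\ref{gamma**}), and it is $\mathcal{P}\otimes\mathcal{B}(E)$-measurable as the composition of $\gamma^{y,z,u,u'}(e)$ from Example~\ref{exgamma} with the predictable processes $(Y^2_-,Z^2,U^1,U^2)$. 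Before invoking this, I would first apply Lemma~\ref{basicprops2} so that $U^1,U^2$ are uniformly bounded (by $2\max_i|Y^i|_\infty =: c$), which lets me exploit the \emph{local} bounds on $g'$ present in both $({\rm \bf A}_{\rm \bf fin})$ and $({\rm \bf A}_{\rm \bf infi})$.

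For part~1 (finite activity), the argument is essentially a citation of Example~\ref{exagamma}.2. Since $g'>-1$ and $g'$ is locally bounded above (uniformly in $(\omega,t,y,z,e)$), the representation above yields $-1<\overline{\gamma}\le C$ for some deterministic $C$ on the set $\{|u|,|u'|\le c\}$, and $\overline{\gamma}\equiv0$ off $A$. Hence $\int_E|\overline{\gamma}_s|^2\zeta_s\,{\rm d}\lambda\le C^2 c_\nu\lambda(A)<\infty$, so $\langle\overline{\gamma}*\widetilde{\mu}\rangle_T$ is bounded; adding the bounded predictable drift $\int\beta\,{\rm d}B$ keeps the bracket bounded, and the jumps of the compensated integral stay strictly above $-1$. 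The Protter--Shimbo criterion (Example~\ref{exmartingale}.1) then gives the martingale property of $\mathcal{E}(\int\beta\,{\rm d}B+\overline{\gamma}*\widetilde{\mu})$, and Proposition~\ref{comparegeneral} applies.

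For part~2 (infinite activity), I will verify the $({\rm \bf A}'_{\bm\gamma})$-style hypotheses along the lines of Example~\ref{exagamma}.3, using the BMO assumption on $U^1*\widetilde{\mu}$ and $U^2*\widetilde{\mu}$. The bound $|g'(x)|\le K(c)|x|$ from $({\rm \bf A}_{\rm \bf infi})$, combined with convexity of $x\mapsto x^2$, yields
\[
|\overline{\gamma}_s(e)|^2 \le K(c)^2\int_0^1 |tU^1_s(e)+(1-t)U^2_s(e)|^2\,{\rm d}t \le \tfrac{K(c)^2}{2}\bigl(|U^1_s(e)|^2+|U^2_s(e)|^2\bigr).
\]
Taking conditional expectations over $[\tau,T]$ and using the BMO-norms of $U^i*\widetilde{\mu}$ shows that $\overline{\gamma}*\widetilde{\mu}\in{\rm BMO}(\PP)$. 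The lower bound $g'\ge -1+\delta(c)$ provides $\Delta(\overline{\gamma}*\widetilde{\mu})\ge-1+\delta(c)$; Kazamaki's criterion (Example~\ref{exmartingale}.2) then gives that $\mathcal{E}(\overline{\gamma}*\widetilde{\mu})$ is a martingale, and since $\beta$ is bounded, adding the Brownian part preserves this property (Novikov). Proposition~\ref{comparegeneral} applies again.

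The main technical obstacle is part~2: unlike in the finite-activity case, $\langle\overline{\gamma}*\widetilde{\mu}\rangle_T$ need not be bounded, so one cannot use Protter--Shimbo directly. The delicate point is to get BMO control of $\overline{\gamma}*\widetilde{\mu}$ from BMO control of $U^i*\widetilde{\mu}$, and it is precisely the linear growth bound $|g'(x)|\le K(c)|x|$ built into $({\rm \bf A}_{\rm \bf infi})$ that makes this transfer work: a merely locally bounded $g'$ would lose the square-integrability scaling. This is also why the BMO-property of $U^i*\widetilde{\mu}$ has to be included explicitly as a hypothesis in part~2, whereas in part~1 it is automatic from boundedness of $\overline{\gamma}$ and $\lambda(A)<\infty$.
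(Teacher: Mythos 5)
Your argument is correct and follows essentially the same route as the paper: the paper's proof is exactly the reduction to Proposition~\ref{comparegeneral} via the natural candidate $\overline{\gamma}$ from (\ref{gammaform})/Example~\ref{exgamma}, with $({\rm \bf A}_{\rm \bf fin})$ resp.\ $({\rm \bf A}_{\rm \bf infi})$ feeding into Example~\ref{exagamma}.2--3 and the martingale criteria of Example~\ref{exmartingale} (Protter--Shimbo in the finite-activity case, BMO/Kazamaki plus the BMO hypothesis on $U^i*\widetilde{\mu}$ in the infinite-activity case). You have merely written out in detail the estimates the paper delegates to those examples, so no gap remains.
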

\begin{proof}
This follows directly from Proposition~\ref{comparegeneral} and Example~\ref{exagamma}, noting that representation $(\ref{gammaform})$ in connection with condition $({\rm \bf A}_{\rm \bf fin})$ resp. $({\rm \bf A}_{\rm \bf infi})$ meets the sufficient conditions in Example~\ref{exmartingale}.
\end{proof}

Unlike classical a-priori estimates that offer some $L^2$-norm estimates for the BSDE solution in terms of the data, the next result gives a simple $L^{\infty}$-estimate
for the $Y$-component of the solution. Such will be useful for the derivation of BSDE solution bounds and for truncation arguments.
\begin{proposition}\label{estimategeneral} 
Let $(Y,Z,U)\in \set$ be a solution to the BSDE $(\xi ,f)$ with $\xi \in L^{\infty}(\FF_T)$, $f$ be Lipschitz continuous w.r.t.\ $(y,z)$ with Lipschitz constant $K_f^{y,z}$ and
satisfying $({\rm \bf A}_{\bm \gamma})$ with $f_.(0,0,0)$ bounded. Then $|Y_t|\leq \exp \big(K_f^{y,z}(T-t)\big)\big(|\xi |_{\infty}+(T-t)|f_.(0,0,0)|_{\infty}\big)$ for $t\leq T$.
\end{proposition}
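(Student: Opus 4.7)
The plan is to linearize the generator along the single solution trajectory---mimicking the first half of the proof of Proposition~\ref{comparegeneral} but with the zero process playing the role of a ``second solution''---introduce a change of measure that absorbs the stochastic integrals, and then bound the resulting conditional expectation of a purely deterministic data term.

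First I would introduce the predictable processes
\[
\alpha_s := \mathds{1}_{\{Y_{s-}\neq 0\}}\frac{f_s(Y_{s-},Z_s,U_s)-f_s(0,Z_s,U_s)}{Y_{s-}}, \quad
\beta_s := \mathds{1}_{\{Z_s\neq 0\}}\frac{f_s(0,Z_s,U_s)-f_s(0,0,U_s)}{\lVert Z_s\rVert^2}Z_s^T,
\]
which are bounded by $K:=K_f^{y,z}$ thanks to the Lipschitz hypothesis on $f$, and set $\overline{\gamma}_s:=\gamma_s^{0,0,U_s,0}$. Applying the $({\rm \bf A}_{\bm \gamma})$-inequality \eqref{gamma*} with the pair $(U_s,0)$ gives $f_s(0,0,U_s)-f_s(0,0,0)\le \int_E\overline{\gamma}_s(e)U_s(e)\zeta(s,e)\lambda({\rm d}e)$, while the martingale clause in $({\rm \bf A}_{\bm \gamma})$ together with boundedness of $\beta$ ensures $\mathcal{E}(\int\beta\,{\rm d}B+\overline{\gamma}*\widetilde{\mu})$ is a true $\PP$-martingale, and thus defines an equivalent measure $\QQ$.

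Next, setting $R_t:=\exp(\int_0^t\alpha_s\,{\rm d}s)$, which is continuous, strictly positive and sandwiched between $e^{-KT}$ and $e^{KT}$, I would apply It\^o to $R_tY_t$ and substitute the combined three-step linearization of $f_s(Y_{s-},Z_s,U_s)$ against $f_s(0,0,0)$. After the Girsanov transformation absorbs the $\beta Z$ and $\overline{\gamma}U$ drift contributions into the compensated integrals, one obtains
\[
R_tY_t \le R_T\xi + \int_t^T R_s f_s(0,0,0)\,{\rm d}s - (L_T-L_t),
\]
where $L:=\int RZ\,{\rm d}B^{\QQ}+(RU)*\widetilde{\mu}^{\QQ}$ is a $\QQ$-local martingale. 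Localizing $L$ along $\tau_n\uparrow\infty$, taking $\EE_{\QQ}[\,\cdot\,|\FF_t]$, and passing to the limit (dominated convergence is legitimate since $RY$ is bounded) yields
$R_tY_t \le \EE_{\QQ}\bigl(R_T|\xi|_\infty + |f_\cdot(0,0,0)|_\infty \int_t^T R_s\,{\rm d}s \,\big|\, \FF_t\bigr)$. Dividing by $R_t>0$, using $R_s/R_t\le e^{K(s-t)}$ and the elementary bound $(e^{K(T-t)}-1)/K\le (T-t)e^{K(T-t)}$ then delivers the claimed upper estimate on $Y_t$.

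For the matching lower bound I would repeat the argument with $\gamma'_s:=\gamma_s^{0,0,0,U_s}$ in place of $\overline{\gamma}$: swapping the roles of $U$ and $U'$ in \eqref{gamma*} gives the reverse inequality $f_s(0,0,U_s)-f_s(0,0,0)\ge \int_E\gamma'_s(e)U_s(e)\zeta(s,e)\lambda({\rm d}e)$, and the corresponding symmetric change of measure yields $Y_t\ge -e^{K(T-t)}(|\xi|_\infty+(T-t)|f_\cdot(0,0,0)|_\infty)$. The main subtlety is the localization step: one needs to verify that the two components of $L$ are genuine $\QQ$-local martingales and that the contributions $\EE_{\QQ}[L_T^{\tau_n}-L_t^{\tau_n}\,|\,\FF_t]$ vanish in the limit. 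Both ultimately reduce to the boundedness of $R$, $Y$ and $\xi$ coupled with the martingale property of $\mathcal{E}(\int\beta\,{\rm d}B+\overline{\gamma}*\widetilde{\mu})$ that is built into $({\rm \bf A}_{\bm \gamma})$, so no further hypotheses are required.
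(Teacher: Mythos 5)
Your proposal is correct and follows essentially the same route as the paper, which likewise reduces the estimate to the linearization/change-of-measure argument of Proposition~\ref{comparegeneral} applied with the zero triple $(0,0,0)$ and data $(0,f)$ in the role of the second solution, using $\overline{\gamma}=\gamma^{0,0,U,0}$ for the upper bound and $\gamma^{0,0,0,U}$ for the lower bound. The only point worth making explicit is that invoking $({\rm \bf A}_{\bm \gamma})$ requires $|U|_\infty<\infty$, which holds here because $Y$ is bounded and one may take the representative of $U$ bounded by $2|Y|_\infty$ as in Lemma~\ref{basicprops2}.
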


\begin{proof}
Set $(Y^1,Z^1,U^1)=(Y,Z,U)$, $(\xi^1,f^1)=(\xi ,f)$, $(Y^2,Z^2,U^2)=(0,0,0)$ and $(\xi^2,f^2)=(0,f)$. Then following the proof of Proposition \ref{comparegeneral}, equation ($\ref{esti}$) becomes
\[
(RY)_{\tau\wedge t} \leq (RY)_{\tau\wedge T}+\int_{\tau\wedge t}^{\tau\wedge T} R_sf_s(0,0,0)\, {\rm d}s-(L_T^{\tau}-L_t^{\tau}), \quad t\in [0,T],
\]
for all stopping times $\tau $ where $L:=M-\langle M,N\rangle$ is in $\MM_{\rm loc}(\QQ )$, $M:=\int RZ\, {\rm d}B+(RU)*\widetilde{\mu}$ is in $\mathcal{M}^2$, $N:=\int \beta\, {\rm d}B+\overline{\gamma}*\widetilde{\mu}$ with 
$\overline{\gamma}:=\gamma^{0,0,U,0}$ and the probability measure $\QQ\approx \PP$ is given by ${\rm d}\QQ :=\EEE (N)_T{\rm d}\PP$. Localizing $L$ along some sequence $\tau^n\uparrow \infty$ of stopping times yields $\EE_{\QQ}\big((RY)_{\tau^n\wedge t}\,\big\lvert\,\FF_t\big )\leq \EE_{\QQ}\big((RY)_{\tau^n\wedge T}+\int_{\tau\wedge t}^{\tau\wedge T} R_sf_s(0,0,0)\, {\rm d}s\,\big\lvert\,\FF_t\big)$.
By dominated convergence, we conclude that $\PP$-a.e
\[
Y_t \leq \EE_{\QQ}\Big( \frac{R_T}{R_t}\xi +\int_t^T \frac{R_s}{R_t}f_s(0,0,0)\, {\rm d}s\, \Big|\, \FF_t\Big) \leq {\rm e}^{K_f^{y,z}(T-t)}\big(|\xi |_{\infty}+(T-t)|f_\cdot(0,0,0)|_{\infty}\big).
\]
Analogously, if we define $\overline{N}:=\int \beta\, {\rm d}B+\overline{\widetilde{\gamma}}*\widetilde{\mu}$ with $\overline{\widetilde{\gamma}}:=\gamma^{0,0,0,U}$, and $\overline{\QQ}$ equivalent to $\PP$ via ${\rm d}\overline{\QQ}:=\EEE (\overline{N})_T{\rm d}\PP$, we deduce that $\overline{L}:=M-\langle M,\overline{N}\rangle$ is in $\MM_{\rm loc}(\overline{\QQ})$ and
\[
(RY)_{\tau\wedge t} \geq (RY)_{\tau\wedge T}+\int_{\tau\wedge t}^{\tau\wedge T} R_sf_s(0,0,0)\, {\rm d}s-(\overline{L}_T^{\tau}-\overline{L}_t^{\tau}), \quad t\in [0,T],
\]
for all stopping times $\tau$. This yields the required lower bound.
\end{proof}

Again, we can specify explicit conditions on the generator function that are sufficient to ensure the more abstract assumptions of the previous result.
\begin{proposition} \label{estimatetheo}
Let $(Y,Z,U)\in \set$ be a solution to the BSDE $(\xi ,f)$ with $\xi$ in $L^{\infty}(\FF_T)$, $f$ being Lipschitz continuous w.r.t.\ $(y,z)$ with Lipschitz constant $K_f^{y,z}$
such that $f_.(0,0,0)$ is bounded. Assume that one of the following conditions holds:
\begin{enumerate}
\item \emph{(finite activity)} $f$ satisfies $({\rm \bf A}_{\rm \bf fin})$.
\item \emph{(infinite activity)} $f$ satisfies $({\rm \bf A}_{\rm \bf infi})$ and $U*\widetilde{\mu}$ is a {\rm BMO}($\PP$)-martingale.
\end{enumerate}
Then $|Y_t|\leq \exp \big(K_f^{y,z}(T-t)\big)\big(|\xi |_{\infty}+(T-t)|f_s(0,0,0)|_{\infty}\big)$ holds for all $t\leq T$, in particular $|Y|_{\infty}\leq \exp \big(K_f^{y,z}T\big)\big(|\xi |_{\infty}+T|f_s(0,0,0)|_{\infty}\big)$.
\end{proposition}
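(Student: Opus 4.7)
The plan is to deduce Proposition~\ref{estimatetheo} directly from the abstract a-priori estimate of Proposition~\ref{estimategeneral} by verifying, in each case, that the more concrete hypotheses $({\rm \bf A}_{\rm \bf fin})$ or $({\rm \bf A}_{\rm \bf infi})$ imply the abstract condition $({\rm \bf A}_{\bm \gamma})$ (or its BMO-variant $({\rm \bf A}'_{\bm \gamma})$) required there. The Lipschitz and boundedness assumptions on $f$ and $\xi$ are unchanged and can simply be passed through.

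For the finite activity case, the verification is already carried out in Example~\ref{exagamma}.2: with the natural candidate $\gamma$ from $(\ref{gammaform})$ and the density bound $g'>-1$ locally bounded above, the combination $\lambda(A)<\infty$ plus local boundedness of $g'$ on $[-2|Y|_\infty,2|Y|_\infty]$ (using Lemma~\ref{basicprops2} to take $|U|_\infty\le 2|Y|_\infty$) forces $\int_E|\overline\gamma_t(e)|^2\zeta(t,e)\lambda({\rm d}e)$ to be uniformly bounded and $\overline\gamma>-1$, so Example~\ref{exmartingale}.\ref{exmartingale1} delivers the martingale property of $\mathcal{E}(\int\beta\,{\rm d}B+\overline\gamma*\widetilde\mu)$ for any bounded predictable $\beta$. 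Hence $({\rm \bf A}_{\bm \gamma})$ holds and Proposition~\ref{estimategeneral} applies verbatim.

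For the infinite activity case, one first observes that the proof of Proposition~\ref{estimategeneral} only ever invokes the $({\rm \bf A}_{\bm \gamma})$-type inequality and martingale property along the two specific pairs $\overline\gamma=\gamma^{0,0,U,0}$ and $\overline{\widetilde\gamma}=\gamma^{0,0,0,U}$, i.e. comparing $(U,0)$ and $(0,U)$. By Lemma~\ref{basicprops2} we can take $U$ pointwise bounded by $2|Y|_\infty$, and by hypothesis $U*\widetilde\mu$ is in ${\rm BMO}(\PP)$; the trivial integrand $0$ is of course also bounded with $0*\widetilde\mu$ in ${\rm BMO}$. These are exactly the conditions for $({\rm \bf A}'_{\bm \gamma})$, which Example~\ref{exagamma}.3 shows to be implied by $({\rm \bf A}_{\rm \bf infi})$ (via the local Lipschitz bound $|\gamma|\le K(c)(|u|+|u'|)$ on $U$-arguments and Example~\ref{exmartingale}.\ref{exmartingale2}, Kazamaki's criterion for BMO). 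Thus the Girsanov change of measure used in Proposition~\ref{estimategeneral} is justified, and the same computation produces the asserted bound.

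The only subtle point is recognising that $({\rm \bf A}'_{\bm \gamma})$, rather than the full $({\rm \bf A}_{\bm \gamma})$, already suffices to run the proof of Proposition~\ref{estimategeneral} in the infinite activity case, since the argument only ever needs the martingale property for the two comparisons above. Once this is noted, no new computations are required: the supremum bound $|Y_t|\le \exp(K_f^{y,z}(T-t))(|\xi|_\infty+(T-t)|f_\cdot(0,0,0)|_\infty)$ follows by taking the essential supremum in $t$ in the estimate supplied by Proposition~\ref{estimategeneral}.
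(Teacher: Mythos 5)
Your proposal is correct and follows essentially the same route as the paper: the published proof is exactly the one-line reduction to Proposition~\ref{estimategeneral} via Example~\ref{exagamma} and the representation~(\ref{gammaform}), with the parenthetical ``(resp.\ $({\rm \bf A}'_{\bm \gamma})$)'' encoding precisely your observation that in the infinite activity case the argument only needs the inequality and martingale property along the pairs $(U,0)$ and $(0,U)$, which are bounded (Lemma~\ref{basicprops2}) with BMO stochastic integrals by hypothesis. Your write-up merely makes these verifications explicit.
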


\begin{proof}
This follows directly from Proposition~\ref{estimategeneral} and Example~\ref{exagamma},  since $f$ satisfies condition $({\rm \bf A}_{\bm \gamma})$ (resp. $({\rm \bf A}'_{\bm \gamma})$) using equation~(\ref{gammaform}).
\end{proof}

In the last part of this section we apply our comparison theorem for more concrete generators. To this end, we consider 
 a generator $f$  being truncated at  bounds
$a<b$ (depending on time only) as
\begin{align} \label{tildef}
\widetilde{f}_t(y,z,u):=f_t\big(\kappa (t,y),\,z,\,\kappa (t,y+u)-\kappa (t,y)\big),
\end{align}
with $\kappa (t,y):=\big(a(t)\vee y\big)\wedge b(t)$. Next, we show that if a generator satisfies $({\rm \bf A}_{\bm \gamma})$ within the truncation bounds, then the truncated generator satisfies $({\rm \bf A}_{\bm \gamma})$ everywhere.
\begin{lemma} \label{trunclemma}
Let $f$ satisfy~(\ref{gamma*}) for $Y,U$ such that $a(t)\leq Y_{t-},Y_{t-}+U_t(e),Y_{t-}+U'_t(e)\leq b(t)$, $t\in [0,T]$ and let $\gamma$ satisfy one of the conditions of
Example~\ref{exmartingale} for the martingale property of $\mathcal{E}(\overline{\gamma}*\widetilde{\mu})$. Then $\widetilde{f}$ satisfies~(\ref{gamma*}). Especially, if $f$ satisfies
$({\rm \bf A}_{\rm \bf fin})$ on the set where $a(t)\leq y,y+u \leq b(t)$ then $\widetilde{f}$ is Lipschitz in $(y,z)$, locally Lipschitz in $u$ and satisfies $({\rm \bf A}_{\bm \gamma})$.
\end{lemma}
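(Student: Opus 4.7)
The plan is to derive the inequality in~(\ref{gamma*}) for $\widetilde{f}$ by reducing it to the corresponding estimate for $f$ evaluated at truncated arguments. I fix bounded $Y,U,U'$ (with $U*\widetilde{\mu}$ and $U'*\widetilde{\mu}$ in ${\rm BMO}(\PP)$ in the $({\rm \bf A}'_{\bm\gamma})$ setting) and set $\bar Y_{t-}:=\kappa(t,Y_{t-})$, $\bar U_t(e):=\kappa(t,Y_{t-}+U_t(e))-\kappa(t,Y_{t-})$ and $\bar U'_t(e)$ analogously. By construction $\bar Y_{t-}$, $\bar Y_{t-}+\bar U_t(e)=\kappa(t,Y_{t-}+U_t(e))$ and $\bar Y_{t-}+\bar U'_t(e)=\kappa(t,Y_{t-}+U'_t(e))$ all lie in $[a(t),b(t)]$, so the hypothesis on $f$ furnishes a $\bar\gamma>-1$ realizing both the inequality for $f$ at $(\bar Y_{t-},Z_t,\bar U_t,\bar U'_t)$ and the martingale property of $\mathcal{E}(\int\beta\,{\rm d}B+\bar\gamma*\widetilde{\mu})$. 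Since $\widetilde{f}_t(Y_{t-},Z_t,U_t)=f_t(\bar Y_{t-},Z_t,\bar U_t)$ pointwise by the very definition~(\ref{tildef}), this already gives the left-hand side of what is needed.

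To rewrite the bound from being in terms of $\bar U-\bar U'$ to one in terms of $U-U'$, I exploit that $\kappa(t,\cdot)$ is non-decreasing and $1$-Lipschitz, so the predictable slope
\begin{equation*}
\rho_t(e):= \frac{\kappa(t,Y_{t-}+U_t(e))-\kappa(t,Y_{t-}+U'_t(e))}{U_t(e)-U'_t(e)} \quad\bigl(\text{extended by }0\text{ where }U=U'\bigr)
\end{equation*}
takes values in $[0,1]$ and satisfies $\bar U_t(e)-\bar U'_t(e)=\rho_t(e)(U_t(e)-U'_t(e))$. I then set $\widetilde\gamma_t(e):=\rho_t(e)\bar\gamma_t(e)$, which is my candidate for $\widetilde f$. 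A case split on the sign of $\bar\gamma$ shows $\widetilde\gamma>-1$, and the pointwise bound $|\widetilde\gamma|\le|\bar\gamma|$ (together with $\rho\ge 0$ for the lower-bound-on-jumps condition) immediately transfers each of the three sufficient conditions in Example~\ref{exmartingale} from $\bar\gamma$ to $\widetilde\gamma$, so $\mathcal{E}(\int\beta\,{\rm d}B+\widetilde\gamma*\widetilde{\mu})$ is still a martingale. The functional form $\gamma_t^{y,z,u,u'}(e)$ required by Definition~\ref{agammadef} is obtained by carrying out the same construction on deterministic arguments: $\rho$ is expressed as a $\mathcal{P}\otimes\mathcal{B}(\mathbb{R}^{d+3})\otimes\mathcal{B}(E)$-measurable function of $(t,y,u,u',e)$ and composed with the $\gamma$-field supplied by the hypothesis on $f$.

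For the \emph{Especially} part, assume $f$ satisfies $({\rm \bf A}_{\rm \bf fin})$ on the set $\{a(t)\le y,\,y+u\le b(t)\}$. By Example~\ref{exagamma}.2 the inequality~(\ref{gamma*}) then holds for $f$ on that set, so the first part of the lemma yields $({\rm \bf A}_{\bm\gamma})$ for $\widetilde f$. It remains to check the regularity claims. Lipschitz continuity in $(y,z)$: the $\widehat f$-part composes with the $1$-Lipschitz $\kappa$ and causes no issue, while for the $g$-part the argument $\kappa(t,y+u(e))-\kappa(t,y)$ lies in $[a(t)-b(t),b(t)-a(t)]$ and is $2$-Lipschitz in $y$ uniformly in $(t,u,e)$; local boundedness of $g'$ on this bounded range combined with $\lambda(A)<\infty$ promotes this into an honest $(y,z)$-Lipschitz bound for $\widetilde f$. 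Local Lipschitz continuity in $u$ follows from the Lipschitz bound $|\kappa(t,y+u_1(e))-\kappa(t,y+u_2(e))|\le|u_1(e)-u_2(e)|$ together with the same local bound on $g'$ and $\lambda(A)<\infty$. The main technical obstacle will be the predictable measurability of the slope $\rho$ and the careful case-by-case verification that each of the three martingale criteria in Example~\ref{exmartingale} is genuinely inherited by $\widetilde\gamma$ from $\bar\gamma$; this is routine but needs to be written out in the three regimes separately.
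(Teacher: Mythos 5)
Your proposal is correct and follows essentially the same route as the paper: reduce the estimate for $\widetilde{f}$ to the $({\rm \bf A}_{\bm\gamma})$-estimate for $f$ at the truncated arguments (which lie in $[a(t),b(t)]$), then pass from $\kappa$-differences to $U-U'$ using that $\kappa(t,\cdot)$ is nondecreasing and $1$-Lipschitz, and check that the modified $\gamma$ inherits the martingale criteria of Example~\ref{exmartingale} because its modulus is dominated by $|\overline{\gamma}|$ and the lower bound on jumps is preserved; the Especially-part is handled exactly as in the paper via Example~\ref{exagamma}.2 and the contraction property of $\kappa$. The only (cosmetic) difference is that you dampen $\overline{\gamma}$ by the exact slope $\rho\in[0,1]$ of $\kappa$, obtaining an equality, whereas the paper multiplies $\overline{\gamma}$ by the indicator $\mathds{1}_{\{\overline{\gamma}\geq 0,\,U\geq U'\}}+\mathds{1}_{\{\overline{\gamma}<0,\,U<U'\}}$ and uses the resulting inequality.
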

\begin{proof}
Using  monotonicity of $x\mapsto \kappa (t,x)$, we get that $\widetilde{f}_t(Y_{t-},Z_t,U_t)-\widetilde{f}_t(Y_{t-},Z_t,U'_t)$  equals
\begin{align*}
& f_t\big(\kappa (t,Y_{t-}),Z_t,\kappa (t,Y_{t-}+U_t)-\kappa (t,Y_{t-})\big)-f_t\big(\kappa (t,Y_{t-}),Z_t,\kappa (t,Y_{t-}+U'_t)-\kappa (t,Y_{t-})\big) \\
& \quad \leq \int_E \overline{\gamma}_t(e) \big(\kappa (t,Y_{t-}+U_t(e)) -\kappa (t,Y_{t-}+U'_t(e))\big)\, \zeta (t,e)\, \lambda ({\rm d}e) \\
& \quad \leq \int_E \overline{\gamma}_t(e) \big(\mathds{1}_{\{ \overline{\gamma}\geq 0, U\geq U'\} }+\mathds{1}_{\{ \overline{\gamma}<0, U<U'\}}\big)\big(U_t(e)-U'_t(e)\big)\, \zeta (t,e)\, \lambda ({\rm d}e).
\end{align*}
Setting $\overline{\gamma^*}:= \overline{\gamma}\big(\mathds{1}_{\{ \overline{\gamma}\geq 0, U\geq U'\} }+\mathds{1}_{\{ \overline{\gamma}<0, U<U'\} }\big)$ we see that the stochastic exponential 
$\mathcal{E}\big(\int \beta {\rm d}B+\overline{\gamma^*}*\widetilde{\mu}\big)$ is a martingale for all bounded and predictable processes $\beta$ and $\widetilde{f}$ satisfies~(\ref{gamma*}).
The latter claim easily follows from the fact that if $f$ satisfies $({\rm \bf A}_{\rm \bf fin})$ on $a(t)\leq y,y+u \leq b(t)$ then $f$ satisfies~(\ref{gamma*}) on $a(t)\leq Y_{t-},Y_{t-}+U_t(e),Y_{t-}+U'_t(e) \leq b(t)$
using Example~\ref{exagamma}.2. The Lipschitz properties of $\widetilde{f}$ follow from the fact that $\kappa$ is a contraction and $f$ is Lipschitz within the truncation bounds.
\end{proof}

Concrete $L^\infty$-bounds for bounded solutions to BSDE $(\xi ,f)$ with suitable $\widehat{f}$-part are provided by
\begin{proposition} \label{ODEbounds}
Let $f$ be a generator of the form $(\ref{generator1})$ with $\big|\widehat{f}_t(y,z)\big|\leq K_1+K_2|y|$ for some $K_1,K_2\geq 0$, $g_t(y,z,0,e)\equiv 0$ and $\xi \in L^{\infty}(\FF_T)$ with
$c_1\leq \xi \leq c_2$ for some $c_1,c_2\in \RR$. Assume that there are solutions $a$ and $b$ to the ODEs $y'(t)=K_1+K_2|y(t)|$, $y(T)=c_1$ and $y'(t)=-(K_1+K_2|y(t)|)$,
$y(T)=c_2$ respectively, such that $a\leq b$ on $[0,T]$. If the truncated generator $\widetilde{f}$ in $(\ref{tildef})$ satisfies $({\rm \bf A}_{\bm \gamma})$ and is Lipschitz in $(y,z)$, then any solution
$(\widetilde{Y},\widetilde{Z},\widetilde{U})\in\set$ to the JBSDE $(\xi ,\widetilde{f})$ also solves the JBSDE $(\xi ,f)$ and satisfies $a(t)\leq \widetilde{Y}_t\leq b(t),$ $t\in [0,T]$.
\end{proposition}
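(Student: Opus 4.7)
The plan is first to establish the sandwich $a(t)\le\widetilde Y_t\le b(t)$ for all $t\in[0,T]$, $\PP$-a.s., and then to observe that under this sandwich the truncation $\kappa$ is inactive along the solution, so that $\widetilde f=f$ on the trajectory and $(\widetilde Y,\widetilde Z,\widetilde U)$ solves the BSDE $(\xi,f)$. The preparatory step I would carry out is to replace $\widetilde U$ by its pointwise bounded representative from Lemma~\ref{basicprops2}, which is supported on the thin jump set $\{(\omega,s,e):s\in D,\,e=\beta_s(\omega)\}$ and therefore vanishes $\lambda$-a.e.\ for $\PP\otimes{\rm d}s$-a.e.\ $(\omega,s)$. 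Combined with the hypothesis $g_s(y,z,0,e)\equiv 0$, this collapses the integral part of $\widetilde f$ along the solution and yields
\[
\widetilde f_s(\widetilde Y_{s-},\widetilde Z_s,\widetilde U_s)=\widehat f_s\bigl(\kappa(s,\widetilde Y_{s-}),\widetilde Z_s\bigr),\qquad \PP\otimes{\rm d}s\text{-a.e.}
\]

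For the upper bound $\widetilde Y_t\le b(t)$ I would then invoke Proposition~\ref{comparegeneral} with $(Y^1,Z^1,U^1)=(\widetilde Y,\widetilde Z,\widetilde U)$ (data $(\xi,\widetilde f)$) and $(Y^2,Z^2,U^2)=(b,0,0)$ under the $u$-free comparison generator $f_2(s,y,z,u):=K_1+K_2|\kappa(s,y)|$. Since $\kappa(s,b(s))=b(s)$ and $-b'(s)=K_1+K_2|b(s)|$, the deterministic triple $(b,0,0)$ indeed solves BSDE $(c_2,f_2)$; $f_2$ is $K_2$-Lipschitz in $y$ and independent of $(z,u)$, so $(\ref{gamma**})$ holds trivially with $\overline\gamma\equiv 0$; one has $\xi\le c_2$; and the preparatory identity together with $|\widehat f|\le K_1+K_2|y|$ provides the required comparison
\[
\widetilde f_s(\widetilde Y_{s-},\widetilde Z_s,\widetilde U_s)=\widehat f_s\bigl(\kappa(s,\widetilde Y_{s-}),\widetilde Z_s\bigr)\le K_1+K_2|\kappa(s,\widetilde Y_{s-})|=f_2(s,\widetilde Y_{s-},\widetilde Z_s,\widetilde U_s).
\]
The symmetric argument---taking $(a,0,0)$ as the lower solution associated with $f_1(s,y,z,u):=-(K_1+K_2|\kappa(s,y)|)$ and using $\widetilde f$ (Lipschitz in $(y,z)$ and satisfying $({\rm\bf A}_{\bm\gamma})$ by hypothesis) as the upper generator---then yields $a(t)\le\widetilde Y_t$.

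Once the sandwich is in hand, $\kappa(s,\widetilde Y_{s-})=\widetilde Y_{s-}$; at a jump mark one has $\widetilde Y_{s-}+\widetilde U_s(\beta_s)=\widetilde Y_s\in[a(s),b(s)]$, whereas for the chosen representative $\widetilde U_s(e)=0$ for $\lambda$-a.e.\ $e\ne\beta_s$, so $\kappa(s,\widetilde Y_{s-}+\widetilde U_s(e))-\kappa(s,\widetilde Y_{s-})=\widetilde U_s(e)$ $\lambda$-a.e., whence $\widetilde f_s(\widetilde Y_{s-},\widetilde Z_s,\widetilde U_s)=f_s(\widetilde Y_{s-},\widetilde Z_s,\widetilde U_s)$ $\PP\otimes{\rm d}s$-a.e., and $(\widetilde Y,\widetilde Z,\widetilde U)$ also solves $(\xi,f)$. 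The main obstacle in the scheme is precisely the preparatory identity: making the integral piece of $\widetilde f$ vanish at the solution forces the combined use of the specific support of the Lemma~\ref{basicprops2} representative and the vanishing hypothesis $g_s(y,z,0,e)\equiv 0$, as no sign or monotonicity information on $g$ in its third argument is available from the standing assumptions, which would otherwise be needed to dominate $\widetilde f$ by a $u$-free comparison generator.
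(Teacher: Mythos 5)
There is a genuine gap, and it sits exactly where you locate the ``main obstacle'': the preparatory identity is false. The representative produced by Lemma~\ref{basicprops2} is a pointwise truncation of $\widetilde U$ at level $2|\widetilde Y|_\infty$; it is \emph{not} a version supported on the jump graph $\{(\omega,s,e):s\in D,\ e=\beta_s(\omega)\}$, and no such version can exist for a nontrivial $\widetilde U$. Indeed, a function supported on that graph vanishes for all $s$ outside the countable set $D_\omega$, hence vanishes $\PP\otimes{\rm d}s\otimes\lambda$-a.e.\ (the time-sections of the graph are Lebesgue-null, irrespective of atoms of $\lambda$); since $\nu({\rm d}s,{\rm d}e)=\zeta_s\,{\rm d}\lambda\,{\rm d}s$ is absolutely continuous in time, such a function is the zero element of $\mathcal{L}^2(\widetilde{\mu})$ and cannot be a representative of $\widetilde U$ unless $\widetilde U=0$; swapping it in would also change the martingale $\widetilde U*\widetilde{\mu}$. (Note also that $\mathds{1}_D\mathds{1}_{\{\beta_\cdot\}}$ is only optional, so the compensator identity $\EE[W*\mu_T]=\EE[W*\nu_T]$, which requires $\widetilde{\mathcal{P}}$-measurable $W\ge 0$, cannot be invoked for it.) Consequently the claimed collapse $\widetilde f_s(\widetilde Y_{s-},\widetilde Z_s,\widetilde U_s)=\widehat f_s(\kappa(s,\widetilde Y_{s-}),\widetilde Z_s)$ a.e.\ is wrong in general -- were it true, the $g$-part of any generator would be irrelevant along every bounded solution, contradicting e.g.\ the entropic example where $g\ge 0$ contributes strictly positively. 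This invalidates your upper bound: applying Proposition~\ref{comparegeneral} with the $u$-free generator $f_2(s,y,z,u)=K_1+K_2|\kappa(s,y)|$ requires the generator inequality along the \emph{first} solution, i.e.\ $\widetilde f_s(\widetilde Y_{s-},\widetilde Z_s,\widetilde U_s)\le K_1+K_2|\kappa(s,\widetilde Y_{s-})|$, which needs the $g$-integral to be nonpositive along the solution; no sign information on $g$ is available, and for $g>0$ the inequality fails. (Your lower bound happens to be fine, because there the inequality is evaluated along the deterministic solution $(a,0,0)$, where $u=0$ and $g_t(\cdot,\cdot,0,e)\equiv 0$ suffices.)

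The repair is essentially the paper's argument: do not eliminate the $u$-dependence, but compare with generators that carry the \emph{same} truncated $g$-part, namely $\widetilde f^{\,i}$ built from $\widehat f^{\,1}=-(K_1+K_2|y|)$, $\widehat f^{\,2}=\widehat f$, $\widehat f^{\,3}=K_1+K_2|y|$. Then $(a,0,0)$ and $(b,0,0)$ solve the BSDEs $(c_1,\widetilde f^{\,1})$ and $(c_2,\widetilde f^{\,3})$ -- only $g_t(y,z,0,e)\equiv 0$ at $u=0$ and $\kappa(t,a(t))=a(t)$, $\kappa(t,b(t))=b(t)$ are used -- and $\widetilde f^{\,1}\le\widetilde f\le\widetilde f^{\,3}$ holds pointwise, so Proposition~\ref{comparegeneral} (for the upper bound in the ``switched roles'' form of the subsequent Remark, since it is $\widetilde f$ that satisfies $({\rm \bf A}_{\bm\gamma})$ and is Lipschitz) yields $a(t)\le\widetilde Y_t\le b(t)$ without any sign assumption on $g$. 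Equivalently, your $u$-free comparison generator could be salvaged by checking the generator inequality along $(b,0,0)$ instead of along $(\widetilde Y,\widetilde Z,\widetilde U)$. For the final step, avoid the graph representative as well: set $U_t(e):=\kappa(t,\widetilde Y_{t-}+\widetilde U_t(e))-\kappa(t,\widetilde Y_{t-})$ and apply the compensator identity to the predictable function $|U-\widetilde U|^2$; since at actual jump times $\widetilde Y_{t-}+\widetilde U_t(\beta_t)=\widetilde Y_t\in[a(t),b(t)]$, one gets $|U-\widetilde U|^2*\mu_T=0$, hence $U=\widetilde U$ in $\mathcal{L}^2(\widetilde{\mu})$, so $\widetilde f_t(\widetilde Y_{t-},\widetilde Z_t,\widetilde U_t)=f_t(\widetilde Y_{t-},\widetilde Z_t,\widetilde U_t)$ $\PP\otimes{\rm d}t$-a.e.\ and $(\widetilde Y,\widetilde Z,\widetilde U)$ solves $(\xi,f)$.
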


\begin{proof}
We set $Y_t:=\kappa (t,\widetilde{Y}_t)$, $Z_t:=\widetilde{Z}_t$, $U_t(e):=\kappa (t,\widetilde{Y}_{t-}+\widetilde{U}_t(e))-\kappa (t,\widetilde{Y}_{t-})$ and
\[
f_t^{\,i}(y,z,u):= \widehat{f}_t^{\,i}\big(\kappa (t,y),z\big)+\int_E g_t\big(\kappa (t,y),z,\kappa (t,y+u)-\kappa (t,y),e\big)\, \zeta (t,e)\, \lambda ({\rm d}e)
\]
with $\widehat{f}_t^{\,1}(y,z):=-(K_1+K_2|y|)$, $\widehat{f}_t^{\,2}(y,z):=\widehat{f}_t(y,z)$ and $\widehat{f}_t^{\,3}(y,z):=K_1+K_2|y|$. By the assumptions on the ODEs, we have
that $(a(t),0,0)$ solves the BSDE $(c_1,f^1)$ and $(b(t),0,0)$ solves the BSDE $(c_2,f^3)$. Taking into account that $\widetilde{f}^{\,1}\leq \widetilde{f}^{\,2}\leq \widetilde{f}^{\,3}$, $c_1 \leq \xi \leq c_2$ and 
$\widetilde{f}^{\,2}$ satisfies $({\rm \bf A}_{\bm \gamma})$, comparison theorem Proposition~\ref{comparegeneral} yields $a(t)\leq \widetilde{Y}_t\leq b(t)$. Hence, $Y$ and $\widetilde{Y}$ are indistinguishable, $U=\widetilde{U}$ in $\mathcal{L}^2(\widetilde{\mu})$ and $(\widetilde{Y},\widetilde{Z},\widetilde{U})$ solves the BSDE $(\xi ,f)$.
\end{proof}

In the next section, we apply these results to two situations: Using Corollary~\ref{bechereralternative}, we give an alternative proof of Thm.3.5 of \cite{Becherer06}
via a comparison principle instead of an argument with stopping times. Moreover, the estimates in Corollary~\ref{boundscor} are applied to solve the power utility maximization problem via a JBSDE
approach in Section~\ref{subsubsec:powut}.

\section{Existence and uniqueness of bounded solutions}\label{sec:exandu}

This section studies BSDE with jumps by the monotone stability approach. 
Building on (straighforward)  results for finite activity, the infinite activity case is treated by monotone approximations.

\subsection{The case of finite activity}

\begin{definition}
A generator function $f$ satisfies condition $({\rm \bf B}_{\bm \gamma})$, if it is Lipschitz continuous in $(y,z)$, locally Lipschitz continuous in $u$ (in the sense that $u\mapsto f_t(y,z,-c\vee u\wedge c)$ is Lipschitz continuous for any $c\in(0,\infty)$), $f_.(0,0,0)$ is bounded, and $f$ satisfies condition $({\rm \bf A}_{\bm \gamma})$.
\end{definition}
The next result  readily leads to Proposition~\ref{finitetheo}, for $A$  in (\ref{generator1}) with $\lambda(A)<\infty$.
\begin{proposition}  \label{finitetheogeneral}
Let $\xi \in L^{\infty}(\FF_T)$ and $f$ satisfies $({\rm \bf B}_{\bm \gamma})$. Then there exists a unique solution $(Y,Z,U)$ in $\set$ to the BSDE $(\xi ,f)$. Moreover for all $t\in[0,T]$, $|Y_t|$ is bounded
by $\exp \big(K_f^{y,z}(T-t)\big)\big(|\xi |_{\infty}+(T-t)|f_.(0,0,0)|_{\infty}\big)$. \end{proposition}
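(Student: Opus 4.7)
Both uniqueness and the $L^\infty$-bound on $Y$ are immediate consequences of the tools developed in Section~\ref{sec:comp}. Any two bounded solutions $(Y^i,Z^i,U^i)\in\SSS^\infty\times\LL^2(B)\times\LL^2(\widetilde{\mu})$, $i=1,2$, can be compared in both directions via Proposition~\ref{comparegeneral}: the Lipschitz property of $f$ in $(y,z)$ together with condition $({\rm \bf A}_{\bm \gamma})$ (built into $({\rm \bf B}_{\bm \gamma})$) supply exactly $(\ref{gamma**})$ and the required martingale property of the relevant stochastic exponential. Hence $Y^1=Y^2$; then the strong orthogonality of the Brownian and compensated-measure integrals in the decomposition of $Y^1-Y^2\equiv 0$, combined with uniqueness of the weak predictable representation~$(\ref{WPRP})$, forces $Z^1=Z^2$ in $\LL^2(B)$ and $U^1=U^2$ in $\LL^2(\widetilde{\mu})$. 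The $L^\infty$-bound on $|Y_t|$ is Proposition~\ref{estimategeneral}, applied directly.

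For existence, the sole obstacle is that $f$ is only \emph{locally} Lipschitz in $u$, which prevents the direct application of the classical Lipschitz JBSDE theory. I would therefore truncate the $u$-argument to reduce matters to that classical setting. Set
\[
C\,:=\,\exp(K_f^{y,z}T)\bigl(|\xi|_\infty+T|f_\cdot(0,0,0)|_\infty\bigr),
\]
fix $c>2C$, let $T_c(x):=(-c)\vee x \wedge c$, and define the truncated generator $f^c_t(y,z,u):=f_t(y,z,T_c\circ u)$, with $T_c$ acting pointwise in $e\in E$. Because $\lambda(A)<\infty$ and the local Lipschitz constant of $f$ in $u$ on $[-c,c]$ is finite, $f^c$ is globally Lipschitz in $(y,z,u)$ with respect to the norm $|\cdot|_t$ from~$(\ref{utnorm})$. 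The classical Picard/contraction argument for globally Lipschitz JBSDEs (\cite{TangLi94,BarlesBuckdahnPardoux97}), which goes through verbatim in the present setup thanks to the weak PRP~$(\ref{WPRP})$, then supplies a unique solution $(Y,Z,U)\in\SSS^2\times\LL^2(B)\times\LL^2(\widetilde{\mu})$ of the JBSDE $(\xi,f^c)$.

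The last step is to show that this $(Y,Z,U)$ is actually bounded, satisfies $|Y|_\infty\le C$, and solves the \emph{original} BSDE. The key observation is that $f^c$ inherits $({\rm \bf A}_{\bm \gamma})$ from $f$ via the modified density
\[
\gamma^{c;y,z,u,u'}_t(e)\,:=\,\gamma^{y,z,T_c u,T_c u'}_t(e)\,\rho^{u,u'}_t(e),\qquad
\rho^{u,u'}_t(e):=\tfrac{T_c(u(e))-T_c(u'(e))}{u(e)-u'(e)}\mathds{1}_{\{u(e)\ne u'(e)\}}\in[0,1],
\]
which yields $\gamma^c>-1$ and $|\gamma^c|\le|\gamma|$ pointwise. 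The latter transfers the martingale property of $\mathcal{E}(\int\beta\,{\rm d}B+\overline{\gamma}*\widetilde{\mu})$ to $\mathcal{E}(\int\beta\,{\rm d}B+\overline{\gamma^c}*\widetilde{\mu})$ via any of the sufficient criteria in Example~\ref{exmartingale} (the relevant quantities for $\gamma^c$ are dominated by those for $\gamma$). Proposition~\ref{estimategeneral} applied to $(\xi,f^c)$ now gives $|Y|_\infty\le C$. By Lemma~\ref{basicprops2} we may take $U$ pointwise bounded by $2|Y|_\infty\le 2C<c$; hence $T_c\circ U=U$ on the support of $\mu$, so that $f^c_t(Y_{t-},Z_t,U_t)=f_t(Y_{t-},Z_t,U_t)$ $\PP\otimes{\rm d}t$-a.e.\ along the solution, and $(Y,Z,U)$ solves $(\xi,f)$. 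The technical point requiring most care, and which I expect to be the main subtlety, is the stability of $({\rm \bf A}_{\bm \gamma})$ under $u$-truncation via $\gamma^c$, as sketched above; everything else reduces to invocations of Section~\ref{sec:comp} and of the classical Lipschitz theory.
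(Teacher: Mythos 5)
Your proposal follows essentially the same route as the paper's proof: truncate the $u$-argument at a level $c$ at least twice the a-priori bound, solve the globally Lipschitz JBSDE $(\xi,f^c)$ by classical fixed-point theory, apply Proposition~\ref{estimategeneral} to get the $L^\infty$-bound on $Y^c$ independent of $c$, invoke Lemma~\ref{basicprops2} to choose $U^c$ bounded by $2|Y^c|_\infty<c$ so that the truncation is inactive and $(Y^c,Z^c,U^c)$ solves $(\xi,f)$, and obtain uniqueness by comparison via Proposition~\ref{comparegeneral}. Two remarks on your write-up. First, this proposition does not assume that $f$ has the form (\ref{generator1}) with $\lambda(A)<\infty$ (that is only the setting of the subsequent Proposition~\ref{finitetheo}); you neither need nor may use finite activity to argue that $f^c$ is globally Lipschitz in $u$ -- this is precisely what the ``locally Lipschitz in $u$'' clause in the definition of $({\rm \bf B}_{\bm \gamma})$ asserts, so it should simply be cited. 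Second, your transfer of $({\rm \bf A}_{\bm \gamma})$ to the truncated generator through $\gamma^{c}=\gamma^{y,z,T_c u,T_c u'}\rho$ with $\rho\in[0,1]$ is the right idea (and is exactly the step the paper leaves implicit when it applies Proposition~\ref{estimategeneral} to $f^c$), but be aware that $({\rm \bf A}_{\bm \gamma})$ only asserts the martingale property of the stochastic exponential, not that one of the sufficient criteria of Example~\ref{exmartingale} holds; the domination $|\gamma^c|\le|\gamma|$ (with jumps still above $-1$) yields the martingale property for $\gamma^c$ only when such a criterion is available, which is why Lemma~\ref{trunclemma} explicitly assumes one of those criteria -- your phrase ``via any of the sufficient criteria'' should be qualified in the same way.
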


\begin{proof}
Consider the Lipschitz generator $f_t^c(y,z,u):=f_t\big(y,z,(u\vee (-c))\wedge c\big)$ with $c>0$ and Lipschitz constant $K_{f^c}$. By classical fixed point arguments and a-priori estimates  (cf.\ e.g.\ \cite[Prop.3.2, 3.3]{Becherer06})  there is a unique solution $(Y^c,Z^c,U^c) \in  \mathcal{S}^2\times \mathcal{L}^2(B)\times \mathcal{L}^2(\widetilde{\mu})$ to the BSDE $(\xi ,f^c)$;  it satisfies
\[
|Y_t^c| \leq C \EE \Big( |\xi |^2+\int_t^T |f_s^c(0,0,0)|^2\, {\rm d}s\, \Big|\, \FF_t\Big) \leq C\big(|\xi |_{\infty}^2+T|f_.(0,0,0)|_{\infty}^2\big)<\infty,
\]
for some constant $C=C(T,K_{f^c})$. Now Proposition $\ref{estimategeneral}$ implies that
$
|Y_t^c|
$
is dominated by 
$
\exp \big(K_f^{y,z}(T-t)\big)\big(|\xi |_{\infty}+(T-t)|f_.(0,0,0)|_{\infty}\big)
$ 
{ for all }$c>0$.
Choosing $c\geq 2\exp \big(K_f^{y,z}T\big)\big(|\xi |_{\infty}+T|f_.(0,0,0)|_{\infty}\big)$ we get that $(Y^c,Z^c,U^c)$ with $Y^c\in \mathcal{S}^{\infty}$ solves the BSDE $(\xi ,f)$ since $U^c$ is bounded by $c$. Uniqueness follows by comparison.
\end{proof}

This leads to a preliminary result on bounded solutions if jumps are of finite activity. 
\begin{proposition}   \label{finitetheo}
Let $\xi \in L^{\infty}(\FF_T)$ and let $f$ satisfy $({\rm \bf A}_{\rm \bf fin})$ (recall Definition~\ref{defAfininfi}) with $f_.(0,0,0)$ bounded. Then there exists a unique solution $(Y,Z,U)$ in $\set$ to the BSDE $(\xi ,f)$.
Moreover for all $t\in[0,T]$, $|Y_t|$ is bounded by $\exp \big(K_f^{y,z}(T-t)\big)\big(|\xi |_{\infty}+(T-t)|f_.(0,0,0)|_{\infty}\big)$.  \end{proposition}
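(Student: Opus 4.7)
The plan is to reduce condition $({\rm \bf A}_{\rm \bf fin})$ together with boundedness of $f_\cdot(0,0,0)$ to condition $({\rm \bf B}_{\bm \gamma})$, after which Proposition~\ref{finitetheogeneral} applies and delivers both statements in one stroke (existence-uniqueness in $\set$ and the explicit $L^\infty$-bound). Concretely, I would check the four ingredients required by $({\rm \bf B}_{\bm \gamma})$: (i)~Lipschitz continuity of $f$ in $(y,z)$; (ii)~boundedness of $f_\cdot(0,0,0)$; (iii)~local Lipschitz continuity in $u$; and (iv)~the structural comparison hypothesis $({\rm \bf A}_{\bm \gamma})$. Items (i) and (ii) are part of the assumptions. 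Item (iv) is precisely the content of Example~\ref{exagamma}.2: the natural $\gamma$-process in Example~\ref{exgamma}, obtained from the density $g'$ of $g$ in $u$ via $\gamma^{y,z,u,u'}_s(e)=\int_0^1 g'_s(y,z,tu+(1-t)u',e)\,{\rm d}t\,\mathds{1}_A(e)$, takes values in $(-1,\infty)$ (since $g'>-1$), is bounded above on any bounded range of its $u,u'$-arguments (by the local upper bound on $g'$), and because $\lambda(A)<\infty$ and $\zeta\le c_\nu$ it obeys the uniform bound $\int_E |\gamma|^2 \zeta\,{\rm d}\lambda \le \mathrm{const}$ needed for the martingale property of $\mathcal{E}(\int\beta\,{\rm d}B+\overline\gamma*\widetilde\mu)$ through Example~\ref{exmartingale}.1 (using that $\beta$ is bounded and predictable).

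The only step that needs a small calculation is (iii). Fix $c>0$. By $({\rm \bf A}_{\rm \bf fin})$, the density $g'$ of $u\mapsto g_t(y,z,u,e)$ satisfies $-1<g'(t,y,z,x,e)\le \bar K(c)$ for all $x\in[-c,c]$ uniformly in $(\omega,t,y,z,e)$, and in particular $|g'|\le \bar K(c)$ on $[-c,c]$. For $u,u'\in L^0(\BB(E),\lambda)$ with $|u|,|u'|\le c$ pointwise, absolute continuity of $g$ in $u$ and the Cauchy–Schwarz inequality give
\begin{align*}
|f_t(y,z,u)-f_t(y,z,u')|
&\le \int_A\Big|\int_{u'(e)}^{u(e)}\!\!g'_t(y,z,x,e)\,{\rm d}x\Big|\zeta(t,e)\,\lambda({\rm d}e)\\
&\le \bar K(c)\int_A |u(e)-u'(e)|\,\zeta(t,e)\,\lambda({\rm d}e)
\le \bar K(c)\sqrt{c_\nu\lambda(A)}\,|u-u'|_t,
\end{align*}
using $\zeta\le c_\nu$ and $\lambda(A)<\infty$. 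Hence $u\mapsto f_t(y,z,(-c\vee u)\wedge c)$ is Lipschitz continuous w.r.t.\ $|\cdot|_t$, uniformly in $(\omega,t,y,z)$, which is the sense required by $({\rm \bf B}_{\bm \gamma})$.

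Putting the pieces together, $f$ satisfies $({\rm \bf B}_{\bm \gamma})$, so Proposition~\ref{finitetheogeneral} produces a unique triple $(Y,Z,U)\in\set$ solving the BSDE $(\xi,f)$ together with the quantitative bound $|Y_t|\le\exp(K_f^{y,z}(T-t))(|\xi|_\infty+(T-t)|f_\cdot(0,0,0)|_\infty)$, which is exactly the claim. I expect the main (minor) obstacle to be (iii): because the $u$-argument lives in the infinite-dimensional space $L^0(\BB(E),\lambda)$ with time-varying reference norm $|\cdot|_t$, one must pick the right norm, and the reduction from pointwise local boundedness of $g'$ to Lipschitz continuity with respect to $|\cdot|_t$ crucially uses the finite-activity hypothesis $\lambda(A)<\infty$ together with the bound $\zeta\le c_\nu$.
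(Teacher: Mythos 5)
Your proposal is correct and follows essentially the same route as the paper: reduce $({\rm \bf A}_{\rm \bf fin})$ together with bounded $f_\cdot(0,0,0)$ to condition $({\rm \bf B}_{\bm \gamma})$ (local Lipschitz continuity in $u$ from the absolutely continuous $g$ with locally bounded density, condition $({\rm \bf A}_{\bm \gamma})$ via Example~\ref{exagamma}.2) and then invoke Proposition~\ref{finitetheogeneral}, which is exactly the paper's one-line argument. The only cosmetic point is that $({\rm \bf A}_{\rm \bf fin})$ gives $g'\in(-1,\bar K(c)]$ on $[-c,c]$, so the two-sided bound should read $|g'|\le\max\{1,\bar K(c)\}$ rather than $\bar K(c)$, which changes nothing in the Lipschitz estimate.
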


\begin{proof}
Noting that local Lipschitz continuity in $u$ follows from the absolute continuity of $g$ in $u$ with locally bounded density function, the claim follows from
Propositions~\ref{estimatetheo} and \ref{finitetheogeneral}.
\end{proof}

\begin{corollary} \label{bechereralternative}
Let $\xi \in L^{\infty}(\FF_T)$ and let $f$ be a generator satisfying $({\rm \bf A}_{\rm \bf fin})$, with $g_t(y,z,0,e)\equiv 0$ and $|\widehat{f}_t(y,z)|\leq K_1+K_2|y|$ for some $K_1,K_2\geq 0$. Set
\begin{equation*}
b(t) = \begin{cases} (|\xi |_{\infty}+\frac{K_1}{K_2})\exp (K_2(T-t))-\frac{K_1}{K_2}, & K_2\neq 0 \\ |\xi |_{\infty}+K_1(T-t), & K_2=0.  \end{cases}
\end{equation*}
Then there exists a unique solution $(Y,Z,U)\in \set$ to the BSDE $(\xi ,f)$ and moreover $|Y_t|\leq b_t$ for $t\in[0,T]$. Finally $\int Z\, {\rm d}B$ and $U*\widetilde{\mu}$ are
{\rm BMO}$(\PP )$-martingales.
\end{corollary}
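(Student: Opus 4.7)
The plan is to combine truncation with the monotone ODE bound of Proposition~\ref{ODEbounds}, then invoke Proposition~\ref{finitetheogeneral} for existence/uniqueness of the bounded solution to the truncated JBSDE, and finally deduce the BMO property from Lemma~\ref{BMOproperty} by checking that $f$ is in fact bounded along the solution.

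First I would exhibit the upper and lower ODE envelopes. Taking $c_1=-|\xi|_\infty$, $c_2=|\xi|_\infty$, an explicit computation shows that the function $b$ given in the statement solves $y'(t)=-(K_1+K_2|y(t)|)$ with $y(T)=c_2$, while $a(t):=-b(t)$ solves $y'(t)=K_1+K_2|y(t)|$ with $y(T)=c_1$, and clearly $a\le b$ on $[0,T]$. Form the truncated generator $\widetilde f$ as in~(\ref{tildef}) with these bounds. Since $f$ satisfies $({\rm \bf A}_{\rm \bf fin})$ (and a fortiori on the set where $a(t)\le y,y+u\le b(t)$), Lemma~\ref{trunclemma} tells us that $\widetilde f$ is Lipschitz in $(y,z)$, locally Lipschitz in $u$, and satisfies $({\rm \bf A}_{\bm\gamma})$. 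Moreover, because $g_t(y,z,0,e)\equiv 0$, we have $\widetilde f_t(0,0,0)=\widehat f_t(\kappa(t,0),0)$, which is bounded by $K_1+K_2\max(|a(0)|,|b(0)|)$; hence $\widetilde f$ satisfies $({\rm \bf B}_{\bm\gamma})$.

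Applying Proposition~\ref{finitetheogeneral} to $(\xi,\widetilde f)$ produces a unique bounded solution $(\widetilde Y,\widetilde Z,\widetilde U)\in\set$. Proposition~\ref{ODEbounds} then gives simultaneously the envelope $a(t)\le \widetilde Y_t\le b(t)$ and the fact that $(\widetilde Y,\widetilde Z,\widetilde U)$ already solves the original BSDE $(\xi,f)$, proving existence with $|Y_t|\le b(t)$. Uniqueness in $\set$ follows from the comparison Theorem~\ref{comparetheo} applied twice, since $f$ satisfies $({\rm \bf A}_{\rm \bf fin})$.

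For the BMO claim, the key observation, which is the only mildly subtle point, is that the assumption $|\widehat f_t(y,z)|\le K_1+K_2|y|$ bounds $\widehat f$ \emph{uniformly in $z$} once $y$ is bounded; together with $g_t(y,z,0,e)\equiv 0$ and the local bound on the density $g'$ from $({\rm \bf A}_{\rm \bf fin})$, one gets $|g_t(y,z,u,e)|\le \bar K|u|$ for $|u|$ within the range of $U$. Using $|U|_\infty\le 2|Y|_\infty$ from Lemma~\ref{basicprops2} and $\lambda(A)<\infty$, this yields a deterministic constant $C$ with $|f_s(Y_{s-},Z_s,U_s)|\le C$, so $\left|\int_t^T f_s(Y_{s-},Z_s,U_s)\,{\rm d}s\right|\le C(T-t)=\langle M\rangle_T-\langle M\rangle_t$ for $M_s:=\sqrt{C}\,B^{(1)}_s$, a BMO$(\PP)$-martingale on $[0,T]$. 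Lemma~\ref{BMOproperty} then delivers that both $\int Z\,{\rm d}B$ and $U*\widetilde\mu$ are in BMO$(\PP)$. The main effort is essentially bookkeeping to check the hypotheses of the three imported results; there is no genuine obstacle.
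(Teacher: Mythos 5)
Your proof is correct and follows essentially the same route as the paper: truncate via Lemma~\ref{trunclemma}, solve the truncated JBSDE by the finite-activity existence result, remove the truncation and obtain the bound $|Y_t|\le b(t)$ via Proposition~\ref{ODEbounds}, get uniqueness from the comparison Theorem~\ref{comparetheo}, and conclude BMO from Lemma~\ref{BMOproperty}. You merely spell out details the paper leaves implicit (the explicit ODE verification, checking $({\rm \bf B}_{\bm\gamma})$ so that Proposition~\ref{finitetheogeneral} applies, and the uniform bound on $f$ along the solution needed for Lemma~\ref{BMOproperty}), all of which are accurate.
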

\begin{proof}
By Lemma~\ref{trunclemma} and Proposition~\ref{finitetheo}, there is a unique solution $(Y,Z,U)$ in the space $\set$ to the BSDE $(\xi ,\widetilde{f})$. By
Proposition~\ref{ODEbounds}, it also solves the BSDE $(\xi ,f)$ and $-b(t)\leq Y_t\leq b(t),\ \forall t\in[0,T]$. Uniqueness follows from the fact that one can apply the comparison Theorem~\ref{comparetheo} for
generators satisfying $({\rm \bf A}_{\rm \bf fin})$. The BMO property follows from Lemma~\ref{BMOproperty}.
\end{proof}

\begin{remark}
Corollary~\ref{bechereralternative} is similar to Thm.3.5 in \cite{Becherer06}, but its proof is different: It relies on previous comparison results for JBSDEs instead of stopping arguments.
  The stochastic integrals of the BSDE solution are BMO-martingales under the assumptions for Lemma~\ref{BMOproperty},
which hold  e.g.\ under the conditions for \cite[Thm.~3.6]{Becherer06}
\end{remark}

\begin{corollary} \label{boundscor}
Let $\xi \in L^{\infty}(\FF_T)$ with $\xi \geq C$ for some constant $C>0$, $K\geq 0$ and set $a(t):=C\exp (-K(T-t))$ and $b(t)=|\xi |_{\infty} \exp (K(T-t)),\ \forall t\in[0,T]$. Assume
$f$ satisfies $({\rm \bf A}_{\rm \bf fin})$ for $c\leq y,y+u\leq d$ for all $c,d\in \RR$ with $0<c<d$, and that $\big|\widehat{f}_t(y,z)\big|\leq K|y|$ and $g_t(y,z,0,e)=0$.
Then there exists a unique solution $(Y,Z,U)\in \set$ to the BSDE $(\xi ,f)$ with $Y\geq \epsilon$ for some $\epsilon >0$. Moreover, it holds $a(t)\leq Y_t\leq b(t)$ and $\int Z\, {\rm d}B$ and $U*\widetilde{\mu}$ are {\rm BMO}$(\PP )$-martingales.
\end{corollary}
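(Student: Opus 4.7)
The argument parallels the proof of Corollary~\ref{bechereralternative}, but exploits that the ODE envelopes $a,b$ stay in a strictly positive bounded range on which $f$ satisfies $({\rm \bf A}_{\rm \bf fin})$. First, observe that $a,b$ are exactly the bounds furnished by Proposition~\ref{ODEbounds} with $K_1=0,\ K_2=K,\ c_1=C,\ c_2=|\xi|_\infty$: indeed $a$ solves $y'=K|y|,\ y(T)=C$ and $b$ solves $y'=-K|y|,\ y(T)=|\xi|_\infty$, both are strictly positive on $[0,T]$ with $\min_{[0,T]}a=Ce^{-KT}=:\epsilon>0$ and $\max_{[0,T]}b=|\xi|_\infty e^{KT}$, and $a\le b$ since $C\le |\xi|_\infty$. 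Set $\kappa(t,y):=(a(t)\vee y)\wedge b(t)$ and let $\widetilde{f}$ be the truncated generator defined in~(\ref{tildef}). Since $f$ satisfies $({\rm \bf A}_{\rm \bf fin})$ on the strictly positive set $\{a(t)\le y,\ y+u\le b(t)\}$, Lemma~\ref{trunclemma} gives that $\widetilde{f}$ is Lipschitz in $(y,z)$, locally Lipschitz in $u$, and satisfies $({\rm \bf A}_{\bm\gamma})$. Moreover, using $g_t(y,z,0,e)\equiv 0$ and $|\widehat{f}_t(y,z)|\le K|y|$, the quantity $\widetilde{f}_t(0,0,0)=\widehat{f}_t(a(t),0)$ is uniformly bounded, so $\widetilde{f}$ satisfies condition $({\rm \bf B}_{\bm\gamma})$.

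Proposition~\ref{finitetheogeneral} then yields a unique bounded solution $(Y,Z,U)\in\set$ of the truncated BSDE $(\xi,\widetilde{f})$. Applying Proposition~\ref{ODEbounds} (whose hypotheses have all been verified above) shows that this $(Y,Z,U)$ in fact solves the original BSDE $(\xi,f)$ and satisfies $a(t)\le Y_t\le b(t)$; in particular $Y\ge \epsilon>0$. For uniqueness in the class of bounded solutions with $Y\ge \epsilon'>0$, let $(Y',Z',U')$ be a second such solution. By Lemma~\ref{basicprops2} we may replace $U'$ by a representative supported on the jumps of $\mu$, so that both $Y'$ and $Y'+U'$ take values in a common strictly positive bounded interval $[c,d]$ (jointly with those of $Y,Y+U$). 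Enlarging the truncation bounds in $\kappa$ from $[a(t),b(t)]$ to this $[c,d]$ preserves the properties established above via the same appeal to Lemma~\ref{trunclemma}, so both $(Y,Z,U)$ and $(Y',Z',U')$ solve the same enlarged truncated BSDE, and Proposition~\ref{finitetheogeneral} forces $Y=Y'$, $Z=Z'$, $U=U'$ in the respective spaces.

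The BMO property will follow from Lemma~\ref{BMOproperty}. With $|U|\le 2|Y|_\infty$ (Lemma~\ref{basicprops2}), $\lambda(A)<\infty$, $\zeta\le c_\nu$, $|\widehat{f}_t(y,z)|\le K|y|$, and $|g_t(y,z,u,e)|\le L_g|u|$ (from local Lipschitzness of $g$ in $u$ on the bounded range together with $g_t(y,z,0,e)=0$), we obtain a deterministic pathwise bound $|f_s(Y_{s-},Z_s,U_s)|\le C'$ for some constant $C'$. Hence $\int_t^T f_s\,{\rm d}s\le C'(T-t)=\langle M\rangle_T-\langle M\rangle_t$ for the BMO-martingale $M_t:=\sqrt{C'/T}\,B^1_t$, and Lemma~\ref{BMOproperty} delivers BMO($\PP$) for both $\int Z\,{\rm d}B$ and $U*\widetilde{\mu}$. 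The main technical subtlety will be the uniqueness step, where one must dynamically enlarge the truncation slab so as to remain strictly positive while containing both candidate solutions; Lemma~\ref{basicprops2}, which makes the values of $U$ off the support of $\mu$ immaterial for the dynamics, is what makes this enlargement possible.
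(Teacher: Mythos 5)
Your proposal is correct and follows essentially the same route as the paper: existence via Lemma~\ref{trunclemma}, Proposition~\ref{finitetheogeneral}/\ref{finitetheo} and Proposition~\ref{ODEbounds} with the ODE data $K_1=0$, $K_2=K$, $c_1=C$, $c_2=|\xi|_\infty$, uniqueness by enlarging the truncation slab to a strictly positive bounded region containing both solutions (the paper concludes there directly by comparison, you equivalently invoke uniqueness for the enlarged truncated generator, which rests on the same comparison), and BMO via Lemma~\ref{BMOproperty}. Only a trivial normalization slip: with $M_t=\sqrt{C'/T}\,B^1_t$ one gets $\langle M\rangle_T-\langle M\rangle_t=C'(T-t)/T$, so take $M=\sqrt{C'}\,B^1$ to dominate $\int_t^T f_s\,{\rm d}s\le C'(T-t)$.
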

\begin{proof}
This can be shown with a similar argument  for the uniqueness  as above: Let $(Y',Z',U')$ be another solution to the BSDE $(\xi ,f)$ with $Y'\geq \epsilon$ for some
$\epsilon >0$. Then $f$ satisfies $({\rm \bf A}_{\rm \bf fin})$ for $a(t)\wedge \epsilon \leq y,\,y+u\leq b(t)\vee |Y'|_{\infty}$; hence the solutions coincide  by comparison.
\end{proof}
\begin{example}
As a special case of Corollary~\ref{boundscor} to be applied in Section~\ref{subsubsec:powut}, setting $K:=\big(\gamma |\varphi |_{\infty}^2\big)/\big(2(1-\gamma )^2\big)$ for some
$\gamma \in (0,1)$ and some predictable and bounded process $\varphi$ we define
\begin{align*}
f_t(y,z,u) &:= \widehat{f}_t(y,z) +\int_E g_t(y,u,e)\, \zeta (t,e)\, \lambda ({\rm d}e)\\
&:=\frac{\gamma}{2(1-\gamma )^2}|\varphi_t|^2y+\int_E \left(\frac{1}{1-\gamma}((u(e)+y)^{1-\gamma}y^{\gamma}-y)-u(e)\right)\, \zeta (t,e)\, \lambda ({\rm d}e).
\end{align*}
From
\(
\frac{\partialup g}{\partialup y}(t,y,u,e) =\left( \frac{u+y}{y}\right)^{1-\gamma}+\frac{\gamma}{1-\gamma}\left( \frac{u+y}{y} \right)^{-\gamma}-\frac{1}{1-\gamma},
\)
we see that $f$ is Lipschitz in $y$ within the truncation bounds. Moreover, $g$ is continuously differentiable with bounded derivatives and we have
\(
\frac{\partialup g}{\partialup u}(t,y,u,e)=\left( \frac{u+y}{y} \right)^{-\gamma}-1>-1,
\) 
for $c \leq y,y+u\leq d$.
\end{example}

\subsection{The case of infinite activity}

For linear generators of the form
\(
f_t(y,z,u):=\alpha_t^0 +\alpha_t y+\beta_t z+\int_E \gamma_t(e)u(e)\, \zeta (t,e)\, \lambda ({\rm d}e),
\)
with predictable coefficients $\alpha^0$, $\alpha$, $\beta$ and $\gamma$, JBSDE solutions can be represented  by an adjoint process. In our context of bounded solutions, one needs rather weak conditions on the adjoint  process. This will be used later on in Section~\ref{sec:appl}. 
The idea of proof is standard, cf.\ \cite[Lem.1.23]{KentiaPhD}  for details.
\begin{lemma} \label{lem:RepSolLinBSDEs}
Let $f$ be a linear generator of the form above and let  $\xi$ be in $ L^{\infty}(\FF_T)$.
\begin{enumerate}
\item Assume that $(Y,Z,U)\in \mathcal{S}^{\infty}\times \mathcal{L}^2(B)\times \mathcal{L}^2(\widetilde{\mu} )$ solves the BSDE $(\xi ,f)$. Suppose that the adjoint process $(\Gamma_s^t)_{s\in[t,T]}:=(\exp (\int_t^s \alpha_u\, {\rm d}u)\mathcal{E}(\int \beta {\rm d}B+\gamma*\widetilde{\mu})_t^s)_{s\in[t, T]}$ is in $\mathcal{S}^1$ for any $t\le T$ and $\alpha^0$ is bounded. Then $Y$ is represented as
\(
Y_t=\mathbb{E}\big[\Gamma_T^t \xi+\int_t^T \Gamma_s^t \alpha_s^0\, {\rm d}s|\FF_t\big].
\)
\item Let $\alpha^0$, $\alpha$, $\beta$ and $\widetilde{\gamma}_t:=\int_E |\gamma_t(e)|^2\, \zeta (t,e)\lambda ({\rm d}e)$, $t\in [0,T]$, be bounded and $\gamma \geq -1$. Then there is a unique solution in $\set$ to the BSDE $(\xi ,f)$ and 
Part~1.\ applies.
\end{enumerate}
\end{lemma}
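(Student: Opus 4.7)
My plan for Part~1 is to apply It\^o's product formula to the semimartingale $\Gamma^t_sY_s$ on $s\in[t,T]$, using the dynamics
\[
{\rm d}\Gamma^t_s = \Gamma^t_{s-}\Bigl[\alpha_s\,{\rm d}s + \beta_s\,{\rm d}B_s + \int_E\gamma_s(e)\,\widetilde\mu({\rm d}s,{\rm d}e)\Bigr]
\]
together with the BSDE dynamics for $Y$ and the quadratic covariation ${\rm d}[\Gamma^t,Y]_s = \Gamma^t_{s-}\beta_sZ_s\,{\rm d}s + \int_E\Gamma^t_{s-}\gamma_s(e)U_s(e)\,\mu({\rm d}s,{\rm d}e)$. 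After compensating $\mu$ in the jump part, the linearity of $f$ makes the proof work: the contributions in $\alpha_sY_{s-}$, $\beta_sZ_s$, and $\int_E\gamma_s(e)U_s(e)\zeta_s(e)\lambda({\rm d}e)$ coming from $-\Gamma^t_{s-}f_s(Y_{s-},Z_s,U_s)\,{\rm d}s$ cancel exactly against the matching contributions from $Y_{s-}\,{\rm d}\Gamma^t_s$ and ${\rm d}[\Gamma^t,Y]_s$, leaving the drift $-\Gamma^t_{s-}\alpha^0_s\,{\rm d}s$. Thus $\Gamma^t_{\cdot}Y_{\cdot}+\int_t^{\cdot}\Gamma^t_{s-}\alpha^0_s\,{\rm d}s - Y_t$ is a local $\PP$-martingale on $[t,T]$.

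The crucial next step is to localize this local martingale by stopping times $\tau_n\uparrow T$, so that $Y_t = \EE\bigl[\Gamma^t_{\tau_n}Y_{\tau_n} + \int_t^{\tau_n}\Gamma^t_{s-}\alpha^0_s\,{\rm d}s\,\big|\,\FF_t\bigr]$, and then to pass to the limit by dominated convergence. The absolute value of the integrand is dominated by $(|Y|_\infty + T|\alpha^0|_\infty)\sup_{s\in[t,T]}|\Gamma^t_s|$, which lies in $L^1(\PP)$ precisely by the hypothesis $\Gamma^t\in\mathcal{S}^1$ together with the boundedness of $Y$ and $\alpha^0$. Letting $n\to\infty$ yields the desired representation, after replacing $\Gamma^t_{s-}$ by $\Gamma^t_s$ in the ${\rm d}s$-integral (they agree Lebesgue-a.e.). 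This dominated-convergence step is the main obstacle: without the $\mathcal{S}^1$-bound on $\Gamma^t$ the pre-limit conditional expectations need not converge, which is exactly why that integrability is built into the hypothesis.

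For Part~2, the plan is to invoke Proposition~\ref{finitetheogeneral} to obtain existence and uniqueness, and then verify that $\Gamma^t\in\mathcal{S}^1$ so that Part~1 applies. To use Proposition~\ref{finitetheogeneral}, I check $({\rm \bf B}_{\bm\gamma})$: Lipschitz continuity in $(y,z)$ with constants $|\alpha|_\infty,|\beta|_\infty$ is immediate, local Lipschitz continuity in $u$ (even global, in the norm $|\cdot|_t$) follows from Cauchy--Schwarz and boundedness of $\widetilde\gamma$, $f_\cdot(0,0,0)=\alpha^0$ is bounded by assumption, and $({\rm \bf A}_{\bm\gamma})$ holds with the natural candidate $\gamma^{y,z,u,u'}_s(e):=\gamma_s(e)$; the required martingale property of $\mathcal{E}(\int\beta'\,{\rm d}B+\gamma*\widetilde\mu)$ for every bounded predictable $\beta'$ follows from Example~\ref{exmartingale} since $\gamma\ge -1$ and the boundedness of $\widetilde\gamma$ and $\beta'$ force $\langle\int\beta'\,{\rm d}B+\gamma*\widetilde\mu\rangle_T$ to be bounded. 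For $\Gamma^t\in\mathcal{S}^1$, boundedness of $\alpha$ handles the exponential prefactor, and the stochastic exponential of $N:=\int\beta\,{\rm d}B+\gamma*\widetilde\mu$ lies in $\mathcal{S}^p$ for some $p>1$ because $N$ has bounded quadratic variation, hence is in ${\rm BMO}(\PP)$, so an application of Doob's maximal inequality together with Example~\ref{exmartingale} gives the required integrability, and Part~1 can be invoked to conclude.
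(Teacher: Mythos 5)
Part~1 of your proposal is correct and is exactly the standard argument the paper points to (it defers details to the thesis reference): It\^o's product rule for $\Gamma^t Y$, exact cancellation of the drift by linearity of $f$, localization, and conditional dominated convergence, for which $\Gamma^t\in\mathcal{S}^1$, $Y\in\mathcal{S}^\infty$ and $\alpha^0$ bounded are precisely what is needed.

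In Part~2 there are two gaps. First, you verify $({\rm \bf A}_{\bm \gamma})$ with the candidate $\gamma_s^{y,z,u,u'}(e):=\gamma_s(e)$, but Definition~\ref{agammadef} requires the candidate to take values in $(-1,\infty)$, whereas the lemma only assumes $\gamma\ge -1$. In the boundary case the condition genuinely fails: for a linear generator the inequality $(\ref{gamma*})$ holds with equality for the kernel $\gamma$, so if $\gamma=-1$ on a set of positive $\PP\otimes{\rm d}t\otimes\zeta\,{\rm d}\lambda$-measure, taking bounded $U<U'$ exactly on that set (e.g.\ $U=0$, $U'=\mathds{1}_{\{\gamma=-1\}}$, which lies in $\mathcal{L}^2(\widetilde{\mu})$ since $\widetilde{\gamma}$ is bounded) shows that no kernel with values in $(-1,\infty)$ can dominate the difference; hence Proposition~\ref{finitetheogeneral} is not applicable as stated. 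The repair is easy but should be said: by Cauchy--Schwarz and boundedness of $\widetilde{\gamma}$ the generator is globally Lipschitz in $(y,z,u)$, so classical Lipschitz JBSDE theory gives a unique solution in $\mathcal{S}^2\times\mathcal{L}^2(B)\times\mathcal{L}^2(\widetilde{\mu})$ directly, and boundedness of $Y$ together with uniqueness in $\set$ then follow from the representation of Part~1 (or from Proposition~\ref{comparegeneral}, which is stated for kernels with values in $[-1,\infty)$). Second, your justification of $\Gamma^t\in\mathcal{S}^1$ is not valid as written: only $\langle N\rangle$ is bounded for $N:=\int\beta\,{\rm d}B+\gamma*\widetilde{\mu}$, the jumps $\gamma_s(e)$ of $N$ need not be bounded above, so $N$ need not be a ${\rm BMO}(\PP)$-martingale in the sense recalled in Section~2, and even BMO would not by itself yield an $\mathcal{S}^p$-bound, $p>1$, for the stochastic exponential of a martingale with jumps. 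The conclusion is nevertheless true and can be obtained directly: by Yor's formula $\mathcal{E}(N)^2=\mathcal{E}(2N+[N])=\mathcal{E}(\tilde{M})\exp(\langle N\rangle)$, where $\tilde{M}:=2N+[N]-\langle N\rangle$ is a local martingale with jumps $(1+\Delta N)^2-1\ge -1$, so $\mathcal{E}(\tilde{M})\ge 0$ is a supermartingale and $\EE\big(\mathcal{E}(N)_\tau^2\big)\le \exp\big(\lVert\langle N\rangle_T\rVert_{L^\infty(\PP)}\big)$ uniformly over stopping times $\tau\le T$; hence $\mathcal{E}(N)$ is an $L^2$-bounded martingale (its martingale property also follows from case~3 of Example~\ref{exmartingale}) and Doob's inequality gives $\mathcal{E}(N)\in\mathcal{S}^2\subset\mathcal{S}^1$, whence $\Gamma^t\in\mathcal{S}^1$ (same computation on $[t,T]$), since $\alpha$ is bounded.
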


Our aim is to prove existence and uniqueness beyond Proposition~\ref{finitetheo} for infinite activity of jumps, that means $\lambda (A)$ may be infinite in
(\ref{generator1}).
To show Theorems $\ref{infinitetheogeneral}$ and $\ref{infinitetheo}$, we use a monotone stability approach of \cite{Kobylanski00}: By approximating a generator $f$ of
the form $(\ref{generator})$ (with $A$ such that $\lambda (A)=\infty$) by a sequence $(f^n)_{n\in \NN}$ of the form $(\ref{generator})$ (with $A_n$ such that $\lambda (A_n)<\infty$)
for which solutions' existence is guaranteed, one gets that the limit of these solutions exist and it solves the BSDE with the original data. 
As in \cite{Kobylanski00}, the monotone approximation approach is perceived as being not easy in execution, a main problem usually being to prove strong convergence of the stochastic integral parts for the BSDE.
By Proposition~\ref{strongconv} convergence works for small terminal
condition $\xi$. That is why we can not apply this Proposition directly to data $(\xi ,f^n)_{n\in \NN}$. Instead we sum (converging) solutions for small
${1}/{N}$-fractions of the desired terminal condition. 
This is inspired by the iterative ansatz from \cite{Morlais10} for a particular generator. For our generator family, 
we adapt and elaborate proofs, using e.g.\ a $\mathcal{S}^1$-closeness argument for the proof of the  strong approximation step.
Compared to \cite{Morlais10}, the analysis for our general family of JBSDEs adds clarity and structural 
insight into what is really needed. It extends the scope of the BSDE stability approach \cite{Kobylanski00,Morlais10}, in particular with regards to
non-Lipschitz dependencies in the jump-integrand, while the 
proof shows comparable ease for the (usually laborious) strong approximation step in the setup under consideration. 
Differently to e.g.\ \cite{ElKarouiMatoussiNgoupeyou16,Morlais10,Yao17}, no  exponential transforms or convolutions  are needed here,  as our generators are ``quadratic'' in $U$ but not in $Z$. 
Despite similarities at first sight, a closer look reveals that Theorem~\ref{infinitetheogeneral} is different from  
\cite[Thm.5.4]{KPZ15}, both in the method of proof and in scope: They prove 
existence for small terminal conditions by following the fixed point approach by \cite{Tevzadze08},
whereas we  show stability for small terminal conditions (Proposition~\ref{strongconv}) 
and apply a different  pasting procedure, approximating not only terminal data but also generators. Here wellposedness of the approximating JBSDEs is obtained directly 
from classical theory by using comparison and estimates from Section~\ref{sec:comp}, which 
enable us to argue
within uniform a-priori bounds for the approximating sequence. 
 Examples in Section~\ref{sec:appl} demonstrate that
also the scope of our results is different.

In more detail, the task for the next Theorem~\ref{infinitetheogeneral} is to construct generators $(f^{k,n})_{1\leq k\leq N, n\in \NN}$
and solutions $(Y^{k,n},Z^{k,n},U^{k,n})$ to the BSDEs with data $(\xi /N,f^{k,n})$ for $N$ large enough such that $(Y^{k,n},Z^{k,n},U^{k,n})$ converges if $n \to \infty$ and
$(Y^n,Z^n,U^n):=\sum_{k=1}^N (Y^{k,n},Z^{k,n},U^{k,n})$ solves the BSDE $(\xi ,f^n)$. In this case $(Y^n,Z^n,U^n)$ converges and its limit is a solution candidate for the BSDE $(\xi ,f)$.
For this program,  we next show  a stability result for JBSDE.

\begin{proposition} \label{strongconv}
Let $(\xi^n) \subset L^{\infty}(\FF_T)$ with $\xi^n\rightarrow \xi$ in $L^2(\FF_T)$ and $(f^n)_{n\in \NN}$ be a sequence of generators with $f^n_.(0,0,0)=0,\ \forall n$, having property
$(B_{\gamma^n})$ such that  $K_f^{y,z}:=\sup_{n\in \NN} K_{f^n}^{y,z}<\infty$. Denote by $(Y^n,Z^n,U^n)\in \set$ the solution to the BSDE $(\xi ,f^n)$ with $Y^n$ bounded by
$|\xi |_{\infty}\exp (K_{f^n}^{y,z}T)$ and set $\tilde{c}:=|\xi |_{\infty}\exp (K_f^{y,z}T)$. Assume that $Y^n$ converges pointwise, $(Z^n,U^n)\rightarrow (Z,U)$ converges weakly
in $\mathcal{L}^2(B) \times \mathcal{L}^2(\widetilde{\mu})$ and $|f_t^n(0,0,u)|\leq \widehat{K}|u|_t^2+\widehat{L}_t$  for all $n$ and $u$ with $|u|\leq 2\tilde{c}$, $\widehat{K}\in \RR_+$ and $\widehat{L}\in L^1(\PP\otimes {\rm d}t)$.
Then $(Z^n,U^n)$ converges to $(Z,U)$ strongly in $\mathcal{L}^2(B) \times \mathcal{L}^2(\widetilde{\mu})$,
 if $|\xi |_{\infty}\equiv  \tilde{c}\exp (-K_f^{y,z}T)  \leq {\exp (-K_f^{y,z}T)}/({80 \max \{ K_f^{y,z},\widehat{K}\} })$.
\end{proposition}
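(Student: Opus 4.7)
The plan is to show that $(Z^n,U^n)_n$ is Cauchy in $\mathcal{L}^2(B)\times\mathcal{L}^2(\widetilde{\mu})$; combined with the assumed weak convergence to $(Z,U)$, uniqueness of limits in this Hilbert space then yields the strong convergence. To this end, I would apply It\^o's formula to $|Y^n-Y^m|^2$ on $[0,T]$ and take expectations, noting that the stochastic integrals w.r.t.\ $B$ and $\widetilde{\mu}$ are true martingales (by the uniform Lipschitz dependence of $f^n$ in $(y,z)$ and the a.e.\ bound $|U^n|\leq 2\tilde c$ from Lemma~\ref{basicprops2}). Writing $\delta Y:=Y^n-Y^m$, $\delta Z:=Z^n-Z^m$, $\delta U:=U^n-U^m$ and $\delta\xi:=\xi^n-\xi^m$, this produces
\[
\mathbb{E}\!\int_0^T\!\lVert\delta Z_s\rVert^2\,{\rm d}s + \mathbb{E}\!\int_0^T\!|\delta U_s|_s^2\,{\rm d}s \;\leq\; \mathbb{E}|\delta\xi|^2 + 2\,\mathbb{E}\!\int_0^T\!\delta Y_s\bigl(f^n_s(Y^n,Z^n,U^n)-f^m_s(Y^m,Z^m,U^m)\bigr){\rm d}s,
\]
and the task reduces to controlling the cross-term on the right.

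A preparatory step would derive uniform a priori bounds on $\lVert Z^n\rVert_{\mathcal{L}^2(B)}^2+\lVert U^n\rVert_{\mathcal{L}^2(\widetilde{\mu})}^2$: applying It\^o to $|Y^n|^2$ and invoking the smallness condition $\tilde c\cdot\max\{K_f^{y,z},\widehat K\}\leq 1/80$ allows the quadratic-in-$(Z^n,U^n)$ contributions arising from the growth bound $|f^n_s(0,0,u)|\leq \widehat K|u|_s^2+\widehat L_s$ (combined with Lipschitz continuity in $(y,z)$) to be absorbed. This also yields uniformly bounded ${\rm BMO}(\PP)$-norms for $\int Z^n\,{\rm d}B$ and $U^n*\widetilde{\mu}$ (compare Lemma~\ref{BMOproperty}), so that by the John--Nirenberg inequality for BMO martingales, $\int_0^T\lVert Z^n_s\rVert^2{\rm d}s$ and $\int_0^T|U^n_s|_s^2{\rm d}s$ admit uniform $L^p$-moments for any $p\geq 1$.

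To bound the cross-term itself, I would use the pointwise generator estimate
\[
|f^n_s(Y^n,Z^n,U^n)|\;\leq\; K_f^{y,z}\bigl(\tilde c+\lVert Z^n_s\rVert\bigr) + \widehat K\,|U^n_s|_s^2 + \widehat L_s,
\]
coming from Lipschitz continuity and $|U^n|\leq 2\tilde c$, and handle its pieces separately. The constant and linear-in-$Z$ pieces are controlled by Cauchy--Schwarz together with the uniform $\mathcal{L}^2$-bounds on $(Z^n)$ and the observation that $\delta Y\to 0$ in $L^p(\PP\otimes{\rm d}t)$ for any $p<\infty$, via dominated convergence from the pointwise convergence of $Y^n$ and the bound $|\delta Y|\leq 2\tilde c$. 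The quadratic-in-$U$ piece is decomposed via $|U^n_s|_s^2\leq 2|\delta U_s|_s^2+2|U^m_s|_s^2$ so that, using $|\delta Y|\leq 2\tilde c$, the factor $4\tilde c\widehat K\leq 1/20$ produces a summand $\tfrac{1}{20}\lVert\delta U\rVert^2_{\mathcal{L}^2(\widetilde{\mu})}$ that absorbs into the left-hand side, while the residual piece involving $|U^m|_s^2$ is paired via Cauchy--Schwarz with the vanishing factor $\lVert\delta Y\rVert_{L^2(\PP\otimes{\rm d}t)}$ and the uniform $L^4$-moment of $|U^m|_s^2$ provided by BMO.

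Putting things together, the inequality reduces to $(1-\alpha)(\lVert\delta Z\rVert^2+\lVert\delta U\rVert^2)\leq\mathbb{E}|\delta\xi|^2+\varepsilon_{n,m}$ with absorption constant $\alpha<1$ fixed by the smallness hypothesis and a remainder $\varepsilon_{n,m}\to 0$; since $\mathbb{E}|\delta\xi|^2\to 0$ by assumption, $(Z^n,U^n)$ is Cauchy. The main obstacle I anticipate is the numerical bookkeeping in this absorption step: each factor of $2$ appearing (e.g.\ from $|U^n|^2\leq 2|\delta U|^2+2|U^m|^2$, from $|\delta Y|\leq 2\tilde c$, or from symmetrizing $f^n$ and $f^m$) must be tracked carefully so that the absorption constant stays strictly below~$1$; the precise constant $1/80$ in the smallness hypothesis is calibrated exactly to accommodate these factors while leaving the Cauchy--Schwarz-based remainder terms carrying a vanishing $L^2$-factor in $\delta Y$.
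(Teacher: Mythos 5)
Your overall frame (It\^o's formula applied to $(\delta Y)^2$, absorption of the $(\delta Z,\delta U)$-quadratic terms via the smallness of $\tilde c\,\max\{K_f^{y,z},\widehat K\}$, and the bound $|U^n|\le 2\tilde c$ from Lemma~\ref{basicprops2}) matches the paper's starting point, and your linear treatment of the $z$-part is fine. The gap is in the residual quadratic term $\EE\int_0^T|\delta Y_s|\,|U^m_s|_s^2\,{\rm d}s$. You propose to control it by Cauchy--Schwarz against $\|\delta Y\|_{L^2(\PP\otimes{\rm d}t)}$, which requires $\sup_m\EE\int_0^T|U^m_s|_s^4\,{\rm d}s<\infty$, claimed via uniformly bounded BMO norms plus John--Nirenberg. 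Neither half of that claim holds under the hypotheses of the proposition. First, Lemma~\ref{BMOproperty} needs a BMO majorant $\langle M\rangle$ for $\pm\int_t^Tf^n_s(Y^n_{s-},Z^n_s,U^n_s)\,{\rm d}s$; such a condition is an assumption of Theorem~\ref{infinitetheogeneral} (its condition 4.), not of Proposition~\ref{strongconv}, and it cannot be manufactured from the data here because $\widehat L$ is only in $L^1(\PP\otimes{\rm d}t)$, so the conditional quantities $\EE\big[\int_t^T\widehat L_s\,{\rm d}s\,\big|\,\FF_t\big]$ need not be bounded and a conditional It\^o argument yields only the unconditional $\mathcal{L}^2$-bound, not BMO. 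Second, even granting uniform BMO bounds, the energy/John--Nirenberg inequalities control moments of the total bracket $\int_0^T|U^m_s|_s^2\,{\rm d}s$, not the time-pointwise fourth power $\int_0^T|U^m_s|_s^4\,{\rm d}s$: with infinite activity $|U^m_s|_s^2$ has no pointwise bound (despite $|U^m|\le 2\tilde c$), and a profile $|U^m_s|_s^2=a\,\mathds{1}_{[0,\epsilon]}(s)$ with $a\epsilon$ bounded keeps the BMO norm small while $\int_0^T|U^m_s|_s^4\,{\rm d}s=a^2\epsilon$ blows up. Replacing Cauchy--Schwarz by dominated convergence is circular, since an integrable majorant for $\sup_m|U^m_\cdot|_\cdot^2$ (as in Lemma~\ref{helplemma2}) is only available \emph{after} strong convergence.

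The paper closes exactly this hole by an asymmetric argument instead of a symmetric Cauchy argument: in the generator estimate it re-centers everything at the weak limit, bounding $\|Z^m\|^2$ and $|U^m_s|_s^2$ by multiples of $\|\delta Z\|^2+\|Z^n-Z\|^2+\|Z\|^2$ and $|\delta U|^2+|U^n-U|^2+|U|^2$. After absorbing the $\delta$-terms via the smallness condition, it lets $m\to\infty$ using weak lower semicontinuity of the norm on the left-hand side, while on the right-hand side the only remaining $m$-dependence is the bounded factor $|Y^n_{s-}-Y^m_{s-}|$ multiplying the fixed integrable function $K_1+2\widehat L_s+K_2\big(\|Z^n_s-Z_s\|^2+\|Z_s\|^2+|U^n_s-U_s|_s^2+|U_s|_s^2\big)$, so ordinary dominated convergence applies; a second absorption then handles $n\to\infty$ against a majorant involving only $\|Z\|^2+|U|^2$. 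To repair your proof you would either have to import this re-centering and the $\liminf_m$ step, or add an assumption yielding uniform integrability of $\sup_m|U^m_\cdot|_\cdot^2$; as written, the step bounding the $|U^m|^2$-residual is a genuine gap.
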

\begin{proof}
We note that $(Y^n,Z^n,U^n)$ is uniquely defined by Proposition $\ref{finitetheogeneral}$. To prove strong convergence of $(Z^{n})_{n\in\NN}$ and $(U^{n})_{n\in \NN}$ we consider
$\updelta Y:=Y^{n}-Y^{m}$, $\updelta Z:=Z^{n}-Z^{m}$, $\updelta U:=U^{n}-U^{m}$ and apply It\^o's formula for general semimartingales to $(\updelta Y)^2$ to obtain
\begin{align*}
(\updelta Y_0)^2 =&\; (\updelta Y_T)^2 +\int_0^T 2\updelta Y_{s-} (f_s^{n}(Y_{s-}^{n},Z_s^{n},U_s^{n})-f_s^{m}(Y_{s-}^{m},Z_s^{m},U_s^{m})) {\rm d}s \\
                   & -\int_0^T \lVert \updelta Z_s\rVert^2 {\rm d}s-2\int_0^T \updelta Y_{s-} \updelta Z_s\, {\rm d}B_s 
 -\int_0^T \mskip-10mu \int_E (\updelta Y_{s-} +\updelta U_s(e))^2-(\updelta Y_{s-})^2\, \widetilde{\mu} ({\rm d}s,{\rm d}e)\\
                   & -\int_0^T \mskip-10mu \int_E (\updelta Y_{s-} +\updelta U_s(e))^2-(\updelta Y_{s-})^2-2\updelta Y_{s-}\updelta U_s(e)\, \nu ({\rm d}s,{\rm d}e).
\end{align*}
Noting that the stochastic integrals are martingales one concludes that
 \begin{equation}\label{ineq1}\begin{split}
&\mathbb{E}\Big( \int_0^T 2\updelta Y_{s-}(f_s^{n}(Y_{s-}^{n},Z_s^{n},U_s^{n})-f_s^{m}(Y_{s-}^{m},Z_s^{m},U_s^{m}))\, {\rm d}s \Big)\\
&= \mathbb{E}\Big( \int_0^T \mskip-10mu \int_E \updelta U_s(e)^2\, \nu ({\rm d}s,{\rm d}e) \Big)  +\mathbb{E}\Big( \int_0^T \lVert \updelta Z_s\rVert^2\, {\rm d}s \Big) - \mathbb{E}\big((\updelta Y_T)^2)+\mathbb{E}((\updelta Y_0)^2\big) .
\end{split}
\end{equation}
Using the inequalities $a\leq a^2+\sfrac{1}{4}$, $(a+b)^2\leq 2(a^2+b^2)$, $(a+b+c)^2\leq 3(a^2+b^2+c^2)$, the Lipschitz property of $f^{n}$ in $y$ and $z$ and the estimate for $f_t^n(0,0,u)$, we have
\begin{equation}\label{ineq2}
\begin{split}
&|f_s^{n}(Y_{s-}^{n},Z_s^{n},U_s^{n})-f_s^{m}(Y_{s-}^{m},Z_s^{m},U_s^{m})|\\
&\quad \leq K_{f^{n}}^{y,z} (|Y_{s-}^{n}|+\lVert Z_s^{n}\rVert )+K_{f^{m}}^{y,z}(|Y_{s-}^{m}|+\lVert Z_s^{m}\rVert )+\widehat{K} |U_s^{n}|_s^2+\widehat{L}_s +\widehat{K} |U_s^{m}|_s^2+\widehat{L}_s\\
&\quad \leq K_1+2\widehat{L}_s+K_2(\lVert \updelta Z_s\rVert^2+\lVert Z_s^{n}-Z_s\rVert^2+\lVert Z_s\rVert^2 +|\updelta U_s|_s^2+|U_s^{n}-U_s|_s^2+ |U_s|_s^2),
\end{split}
\end{equation}
where $K_1:=K_f^{y,z}(2\tilde{c}+\sfrac{1}{2})\in \RR$, $K_2:=5\max \{ K_f^{y,z},\widehat{K}\}$ and $\vert\cdot\vert_t$ is defined in (\ref{utnorm}).
Combing inequalities ($\ref{ineq1}$) and ($\ref{ineq2}$) yields
\begin{align*}
\EE \Big( \int_0^T \lVert \updelta Z_s\rVert^2+|\updelta U_s|_s^2\, {\rm d}s \Big) \le  2\EE \Big( \int_0^T &|\updelta Y_{s-}|(K_1+2\widehat{L}_s+K_2(\lVert \updelta Z_s\rVert^2+\lVert Z_s^{n}-Z_s\rVert^2+\lVert Z_s\rVert^2\Big.\\
                                    &\Big.+|\updelta U_s|_s^2+|U_s^{n}-U_s|_s^2+|U_s|_s^2))\, {\rm d}s \Big) + \EE \big((\xi^n-\xi^m)^2\big).
\end{align*}
Let us recall that the predictable projection of $Y$, denoted by $Y^{\rm p}$, is defined as the unique predictable process $X$ such that $X_{\tau}=\EE (Y_{\tau}|\FF_{\tau-})$ on $\{ \tau<\infty \}$ for all predictable times $\tau$. For $Y^{n}$ it holds $(Y^{n})^{\rm p}=Y_-^{n}$. This follows from \cite[Prop.I.2.35.]{JacodShiryaev03}
using that $Y^n$ is c\`adl\`ag, adapted and quasi-left-continuous,  as $\Delta Y_{\tau}=\Delta U*\widetilde{\mu}_{\tau}=0$ on $\{ \tau<\infty \}$ holds for all predictable times $\tau$ thanks to the absolute continuity of
the compensator $\nu$. Noting that $1-2K_2|\updelta Y_{s-}|\geq 1-4K_2\tilde{c}\geq 3/4$ and setting $Y:=\lim_{n\rightarrow \infty}Y^n$ we deduce by the weak convergence of $(Z^{n})_{n\in \NN}$ and $(U^{n})_{n\in \NN}$, $Y_{-}^{n}=(Y^{n})^{\rm p} \uparrow (Y)^{\rm p}$ as $n\rightarrow \infty$ and by Lebesgue's dominated convergence theorem
\begin{align*}
& \frac{3}{4} \mathbb{E}\Big( \int_0^T  \lVert Z_s^{n}-Z_s\rVert^2+|U_s^{n}-U_s|_s^2\, {\rm d}s\Big)\\
& \leq \frac{3}{4}\liminf\limits_{m\rightarrow \infty} \mathbb{E}\Big( \int_0^T \lVert Z_s^{n}-Z_s^{m}\rVert^2+|U_s^{n}-U_s^{m}|_s^2\, {\rm d}s \Big) \\
& \leq \liminf\limits_{m\rightarrow \infty} 2\EE \Big( \int_0^T |\updelta Y_{s-}|(K_1+2\widehat{L}_s+K_2(\lVert Z_s^{n}-Z_s\rVert^2+\lVert Z_s\rVert^2+|U_s^{n}-U_s|_s^2+|U_s|_s^2))\, {\rm d}s \Big)\\
&\qquad\qquad+\EE \big((\xi^m-\xi^n)^2\big) \\
& = 2\mathbb{E}\Big( \int_0^T |Y_{s-}^{n}-(Y_s)^{\rm p}|( K_1+2\widehat{L}_s+K_2(\lVert Z_s^{n}-Z_s\rVert^2+\lVert Z_s\rVert^2+|U_s^{n}-U_s|_s^2+|U_s|_s^2))\, {\rm d}s\Big)\\
&\quad+\EE \big((\xi -\xi^n)^2\big).
\end{align*}
Noting $\sfrac{3}{4}-2K_2|Y_{s-}^{n}-(Y_s)^{\rm p}|\geq \sfrac{3}{4}-4K_2\tilde{c}\geq \sfrac{1}{2}$, one obtains with dominated convergence 
\begin{align*}
&\frac{1}{2}\limsup\limits_{n\rightarrow \infty} \mathbb{E}\Big( \int_0^T \lVert Z_s^{n}-Z_s\rVert^2+|U_s^{n}-U_s|_s^2\,  {\rm d}s\Big) \\
&\qquad \leq \limsup\limits_{n\rightarrow \infty} 2\EE \Big( \int_0^T |Y_{s-}^{n}-(Y_s)^{\rm p}|(K_1+2\widehat{L}_s+\lVert Z_s\rVert^2+|U_s|_s^2)\, {\rm d}s \Big) +\EE \big((\xi^n-\xi )^2\big)=0.
\end{align*}
\end{proof}
We will need the following result which is a slight variation of \cite[Lem.2.5]{Kobylanski00}.
\begin{lemma} \label{helplemma2}
Let $(Z^n)_{n\in \NN}$ be convergent in $\mathcal{L}^2(B)$ and $(U^n)_{n\in \NN}$ convergent in $\mathcal{L}^2(\widetilde{\mu})$. Then
there exists a subsequence $(n_k)_{k\in \NN}$ such that
\(
\sup\limits_{n_k} \lVert Z^{n_k} \rVert \in L^2(\PP \otimes {\rm d}t) \; \mbox{and}
\; \sup\limits_{n_k}  |U_t^{n_k}|_t  \in L^2(\PP \otimes {\rm d}t).
\)
\end{lemma}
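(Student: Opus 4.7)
The plan is to carry out the standard Riesz--Fischer style extraction of a rapidly summable subsequence, adapted simultaneously to both sequences, and then dominate the sup by a telescoping majorant that is manifestly in $L^2(\PP\otimes{\rm d}t)$.

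First I would use the convergence of $(Z^n)$ in $\mathcal{L}^2(B)$ to extract a subsequence $(m_k)$ with $\lVert Z^{m_{k+1}} - Z^{m_k}\rVert_{\mathcal{L}^2(B)}\le 2^{-k}$, and then from $(U^{m_k})$, which still converges in $\mathcal{L}^2(\widetilde{\mu})$, extract a further subsequence $(n_k)\subset (m_k)$ with $\lVert U^{n_{k+1}} - U^{n_k}\rVert_{\mathcal{L}^2(\widetilde{\mu})}\le 2^{-k}$. After this joint extraction, both difference sequences are summable in their respective $\mathcal{L}^2$-norms.

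Next I would define the candidate majorants
\[
\Phi_t := \lVert Z^{n_1}_t\rVert + \sum_{k\ge 1} \lVert Z^{n_{k+1}}_t - Z^{n_k}_t\rVert, \qquad \Psi_t := |U^{n_1}_t|_t + \sum_{k\ge 1} |U^{n_{k+1}}_t - U^{n_k}_t|_t.
\]
By the triangle inequality applied to the telescoping identity $Z^{n_k} = Z^{n_1} + \sum_{j=1}^{k-1}(Z^{n_{j+1}}-Z^{n_j})$ one gets $\lVert Z^{n_k}_t\rVert\le \Phi_t$ for every $k$, and analogously $|U^{n_k}_t|_t\le \Psi_t$ using that $|\cdot|_t$ is a (semi)norm on $L^0(\mathcal{B}(E),\lambda)$ (in view of $(\ref{utnorm})$). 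Hence $\sup_k \lVert Z^{n_k}\rVert\le \Phi$ and $\sup_k |U^{n_k}_t|_t\le \Psi$ pointwise.

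Finally I would check $\Phi,\Psi\in L^2(\PP\otimes{\rm d}t)$ via Minkowski's inequality in $L^2(\PP\otimes{\rm d}t)$:
\[
\lVert \Phi\rVert_{L^2(\PP\otimes{\rm d}t)} \le \lVert Z^{n_1}\rVert_{\mathcal{L}^2(B)} + \sum_{k\ge 1} 2^{-k} < \infty,
\]
and similarly $\lVert \Psi\rVert_{L^2(\PP\otimes{\rm d}t)}\le \lVert U^{n_1}\rVert_{\mathcal{L}^2(\widetilde{\mu})} + 1<\infty$, which concludes the proof. There is no real obstacle here; the only mild point to be mindful of is that the second extraction must be carried out as a subsequence of the first so that both the rapid $\mathcal{L}^2(B)$-summability and the rapid $\mathcal{L}^2(\widetilde{\mu})$-summability hold along the same indices $(n_k)$.
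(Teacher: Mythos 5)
Your proof is correct and is exactly the standard argument behind the result the paper merely cites from \cite{Kobylanski00} (their Lem.~2.5) for $(Z^n)$ and calls ``analogous'' for $(U^n)$: extraction of a rapidly convergent subsequence and domination of the supremum by a telescoping $L^2(\PP\otimes{\rm d}t)$-majorant, with the joint (nested) extraction being the only point requiring care. One cosmetic remark: after the second extraction the $Z$-differences along $(n_k)$ need not be bounded by $2^{-k}$ termwise, but their sum is still at most $\sum_j 2^{-j}=1$ by telescoping over blocks of the first subsequence, so your Minkowski estimate for $\Phi$ remains valid.
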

\begin{proof}
The result for $(Z^n)_{n\in \NN}$ is from~\cite{Kobylanski00} and  the argument for $(U^n)_{n\in \NN}$ is analogous.
\end{proof}

\begin{theorem}[Monotone stability, infinite activity] \label{infinitetheogeneral}
Let $\xi \in L^{\infty}(\FF_T)$ and let $(f^n)_n$ be a sequence of generators satisfying condition~$(B_{\gamma^n})$ with $K_f^{y,z}:=\sup_{n\in\NN}K_{f^n}^{y,z}<\infty$. Assume that
\begin{enumerate}\item[1.]\label{zerosolution} there is $(\widehat{Y},\widehat{Z},\widehat{U})$ in $\set$ with $\widehat{U}$ bounded and  $f_t^n(\widehat{Y}_{t-},\widehat{Z}_t,\widehat{U}_t)\equiv 0$ for all $n$,
\item[2.]\label{quadbound} for all $u\in L^0(\mathcal{B}(E),\lambda)$ with $|u|\leq |\widehat{U}|_{\infty}+2|\xi |_{\infty}\exp (K_f^{y,z}T)$ there exists $\widehat{K}\in \RR_+$ and a process $\widehat{L}\in L^1(\PP\otimes {\rm d}t)$ such that 
$|f_t^n(0,0,u)|\leq \widehat{K}|u|_t^2+\widehat{L}_t$ for each $n\in \NN$,
\item[3.] the sequence $(f^n)_{n\in\NN}$ converges pointwise and monotonically to a generator $f$,
\item[4.]\label{BMOprop} there is a ${\rm BMO}(\PP )$-martingale $M$ such that for all truncated generators $f_t^{n,\hat{c}}(y,z,u):=f_t^n\big((y\vee (-\hat{c}))\wedge \hat{c},z,(u\vee (-2\hat{c}))\wedge (2\hat{c})\big)$ with $\hat{c}:= |\widehat{Y}|_{\infty}+(|\widehat{U}|_{\infty}/2)+\exp (K_f^{y,z}T)|\xi |_{\infty}$ holds
$\int_t^T f_s^{n,\hat{c}}(Y_{s-},Z_s,U_s)\, {\rm d}s\leq \langle M\rangle_T-\langle M\rangle_t$ or $-\int_t^T f_s^{n,\hat{c}}(Y_{s-},Z_s,U_s)\, {\rm d}s\leq \langle M\rangle_T-\langle M\rangle_t$ for all $n\in \NN$, $(Y,Z,U)\in \set$, and
\item[5.] \label{convprop} for all $(Y,Z,U)\in \set$ and $(U^n)_{n\in \NN}\in \mathcal{L}^2(\widetilde{\mu})$ with $U^n\rightarrow U$ in $L^2(\widetilde{\mu})$ it holds $f^n(Y_{-},Z,U^n) \longrightarrow f(Y_{-},Z,U)$ in $L^1(\PP\otimes {\rm d}t)$.
\end{enumerate}
Then
\begin{enumerate}
\renewcommand{\labelenumi}{\roman{enumi}}
\item $\mskip-10mu )$ there exists a solution $(Y,Z,U)\in \set$ for the BSDE $(\xi ,f)$, with $\int Z\, {\rm d}B$ and $U*\widetilde{\mu}$ being {\rm BMO}$(\PP )$-martingales, and
\item $\mskip-10mu )$ this solution is unique if additionally $f$ satisfies condition $({\rm \bf A}'_{\bm \gamma})$.
\end{enumerate}
\end{theorem}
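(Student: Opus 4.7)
My plan is to follow the monotone-stability/pasting strategy sketched before the theorem: reduce to small-terminal-condition increments where Proposition~\ref{strongconv} applies, then sum and pass to the limit. I begin by normalizing via the base triple, setting $\hat f^n_t(y,z,u):=f^n_t(\widehat Y_{t-}+y,\widehat Z_t+z,\widehat U_t+u)$ so that $\hat f^n_\cdot(0,0,0)\equiv 0$; the uniform Lipschitz constant, the BMO-majorant (hypothesis~4), the monotonicity, and the continuity (hypothesis~5) all transfer to $\hat f^n$, and hypothesis~2 combined with the Lipschitz estimate and boundedness of $\widehat U$ yields a uniform-in-$n$ bound $|\hat f^n_t(0,0,v)|\le\widehat K''|v|_t^2+\widehat L''_t$ with $\widehat L''\in L^1(\PP\otimes dt)$, built from $|\widehat Y|$, $\|\widehat Z\|$ and $|\widehat U|^2_\cdot$. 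Working from now on with the shifted data $(\xi-\widehat Y_T,\hat f^n)$ and relabeling by abuse $\hat f^n\mapsto f^n$, $\xi-\widehat Y_T\mapsto\xi$, I may assume $f^n_\cdot(0,0,0)\equiv 0$. Proposition~\ref{finitetheogeneral} then gives a unique $(Y^n,Z^n,U^n)\in\set$ solving $(\xi,f^n)$ with $|Y^n|_\infty\le|\xi|_\infty\exp(K_f^{y,z}T)=:\tilde c$ and, by Lemma~\ref{basicprops2}, $|U^n|_\infty\le 2\tilde c$; the monotonicity of $(f^n)$ together with comparison (Proposition~\ref{comparegeneral} via $(A_{\gamma^n})$, which is part of $(B_{\gamma^n})$) forces pointwise monotone convergence $Y^n\to Y\in\mathcal{S}^\infty$.

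For the strong limit of $(Z^n,U^n)$, I choose $N\in\mathbb N$ so large that $|\xi|_\infty/N$ lies strictly below the smallness threshold of Proposition~\ref{strongconv}. For each $n$ and $k=0,\dots,N$ let $(\tilde Y^{k,n},\tilde Z^{k,n},\tilde U^{k,n})\in\set$ be the Proposition~\ref{finitetheogeneral}-solution of the BSDE with terminal $k\xi/N$ and generator $f^n$ (with $\tilde Y^{0,n}\equiv 0$). The increments $(Y^{k,n},Z^{k,n},U^{k,n}):=(\tilde Y^{k,n}-\tilde Y^{k-1,n},\tilde Z^{k,n}-\tilde Z^{k-1,n},\tilde U^{k,n}-\tilde U^{k-1,n})$ solve BSDEs with terminal $\xi/N$ and the shifted generator
\[
f^{k,n}_t(y,z,u):=f^n_t(\tilde Y^{k-1,n}_{t-}+y,\tilde Z^{k-1,n}_t+z,\tilde U^{k-1,n}_t+u)-f^n_t(\tilde Y^{k-1,n}_{t-},\tilde Z^{k-1,n}_t,\tilde U^{k-1,n}_t),
\]
vanishing at the origin with Lipschitz constant $K_f^{y,z}$ in $(y,z)$. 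Since $|\tilde U^{k-1,n}_t+u|\le 2\tilde c$ stays in the range of hypothesis~2, a Lipschitz-plus-hypothesis~2 split gives $|f^{k,n}_t(0,0,u)|\le 2\widehat K|u|_t^2+\widehat L^{k,n}_t$, where $\widehat L^{k,n}$ depends on the lower-layer solutions; the argument will proceed inductively so that the strong limits already obtained at layer $k-1$ provide, via Lemma~\ref{helplemma2}, a dominating function in $L^1(\PP\otimes dt)$ uniform in $n$.

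The heart of the proof is the iterative passage $n\to\infty$ for $k=1,2,\dots,N$. Lemma~\ref{BMOproperty} with hypothesis~4 supplies uniform $\mathcal L^2$-bounds on $(Z^{k,n},U^{k,n})$, so a subsequence converges weakly to some $(Z^k,U^k)$. Monotonicity of $(f^n)$ propagates to $\tilde Y^{k,n}$ by comparison, yielding pointwise monotone limits of $\tilde Y^{k,n}$ and hence of $Y^{k,n}$. Proposition~\ref{strongconv} (whose smallness hypothesis is met by construction) upgrades weak to strong $\mathcal L^2$-convergence, and Lemma~\ref{helplemma2} furnishes a further subsequence along which $U^{k,n}\to U^k$ pointwise a.e.\ and dominated by an $\mathcal L^2$-function. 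This dominator closes the induction (providing what is needed at layer $k+1$) and enables passage to the limit in the driver via hypothesis~5; an $\mathcal S^1$-closedness argument then identifies the limit as a bounded solution of the BSDE with shifted driver. Summing the $N$ limits and undoing the base-triple shift delivers $(Y,Z,U)\in\set$ solving $(\xi,f)$; its BMO property follows from Lemma~\ref{BMOproperty} via hypothesis~4.

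For (ii), any bounded solution has BMO integrands by Lemma~\ref{BMOproperty} and hypothesis~4, so condition $(A'_{\bm\gamma})$ supplies an admissible $\gamma$ for which $\mathcal E(\int\beta\,dB+\overline{\gamma}*\widetilde{\mu})$ is a martingale (Example~\ref{exmartingale}.\ref{exmartingale2}); Proposition~\ref{comparegeneral} applied in both directions forces equality of the $Y$-components and then of $(Z,U)$ by uniqueness of the representation~(\ref{WPRP}). The main obstacle I expect is precisely the strong-approximation step: threading the monotonicity and BMO structure inductively through the pasting so that Proposition~\ref{strongconv} can be invoked layer by layer with a uniform-in-$n$ quadratic growth bound, and then combining weak $\mathcal L^2$-convergence, pointwise-a.e.\ selection and hypothesis~5 through the $\mathcal S^1$-closedness step to identify the limit driver.
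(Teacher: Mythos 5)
Your proposal is correct and follows essentially the same monotone-stability/pasting strategy as the paper's own proof: split the terminal condition into $N$ small pieces, solve the layer BSDEs via Proposition~\ref{finitetheogeneral}, obtain monotone pointwise limits by comparison and weak limits from the uniform bounds supplied by hypothesis~4 and Lemma~\ref{BMOproperty}, upgrade to strong convergence by Proposition~\ref{strongconv} with the quadratic bound of hypothesis~2 propagated inductively through Lemma~\ref{helplemma2}, pass to the limit using hypothesis~5 and the $\mathcal{S}^1$-closedness argument, and get uniqueness by comparison under $({\rm \bf A}'_{\bm \gamma})$. Your only deviations are cosmetic: you shift by the base triple $(\widehat{Y},\widehat{Z},\widehat{U})$ globally at the outset and define the layers as differences of solutions with terminal data $k\xi/N$, whereas the paper keeps the base triple as the zeroth layer and shifts the generators recursively via (\ref{induct}); by uniqueness of the layer solutions the two constructions coincide, up to the same bookkeeping of $\widehat{Y}_T$ in the ranges of hypotheses~2 and~4 that the paper's proof itself glosses over.
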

\begin{proof}
Let us first outline the overall program of the proof. We want to construct generators $(f^{k,n})_{1\leq k\leq N, n\in \NN}$ and solutions $(Y^{k,n},Z^{k,n},U^{k,n})$ to the BSDEs
$(\xi /N,f^{k,n})$ for $N$ sufficiently large (to employ Proposition~\ref{strongconv} and get that $((Y^{k,n},Z^{k,n},U^{k,n}))_{n\in \NN}$ converges and $(Y^n,Z^n,U^n):=\sum_{k=1}^N (Y^{k,n},Z^{k,n},U^{k,n})$
solves the BSDE $(\xi ,f^n)$). We show that if for some $k<N$ and all $1\leq l\leq k$ and $n\in \NN$ we have already constructed generators $(f^{l,n})_{1\leq l\leq k,n\in \NN}$
such that there exists solutions $((Y^{l,n},Z^{l,n},U^{l,n}))_{n\in \NN}$ to the BSDEs $(\xi /N,f^{l,n})$ converging for $n\rightarrow \infty$, with $|Y^{l,n}|_{\infty}\leq \exp (K_f^{y,z}T)|\xi |_{\infty}/N=:\tilde{c}$,
then for $\overline{Y}^{k,n}:=\widehat{Y}+\sum_{l=1}^k Y^{l,n}$ with $\overline{Z}^{k,n}$ and $\overline{U}^{k,n}$ defined analogously and
\begin{align} \label{induct}
f_t^{k+1,n}(y,z,u):= f_t^n\big(y+\overline{Y}_{t-}^{k,n},z+\overline{Z}_t^{k,n},u+\overline{U}_t^{k,n}\big)-f_t^n\big(\overline{Y}_{t-}^{k,n},\overline{Z}_t^{k,n},\overline{U}_t^{k,n}\big)
\end{align}
there are solutions $(Y^{k+1,n},Z^{k+1,n},U^{k+1,n})\in\set$ to the BSDEs $(\xi /N,f^{k+1,n})$, converging (in $n$) and satisfying $|Y^{k+1,n}|_{\infty}\leq \tilde{c}$.
Starting initially with the triple $(Y^{0,n},Z^{0,n},U^{0,n})$ defined by $(Y^{0,n},Z^{0,n},U^{0,n}):=(\widehat{Y},\widehat{Z},\widehat{U})$, formula~$(\ref{induct})$ gives an inductive
construction of the generators $f^{k,n}$ and triples $(Y^{k,n},Z^{k,n},U^{k,n})\in\set$ solving the BSDE $(\xi /N,f^{k,n})$ and converging for $n\rightarrow \infty$ with
$|Y^{k,n}|_{\infty}\leq \tilde{c}$ for each $n\in N$ and $1\leq k\leq N$.\\
Note that $f^{k+1,n}$ is Lipschitz continuous in $y$ and $z$ with Lipschitz constant $K_{f^n}^{y,z}$, locally Lipschitz in $u$ and satisfies condition $(A_{\gamma^{k+1,n}})$ with
\[
\gamma_s^{k+1,n}(y,z,u,u',e) := \gamma_s^n\big(y+\overline{Y}^{k,n}_{s-},z+\overline{Z}^{k,n}_s,u+\overline{U}_s^{k,n}(e),u'+\overline{U}_s^{k,n}(e),e\big)
\]
and $f_t^{k+1,n}(0,0,0)\equiv 0$. Hence by the existence and uniqueness result for the finite activity case (see Proposition~\ref{finitetheogeneral}), there exists a unique solution
$(Y^{k+1,n},Z^{k+1,n},U^{k+1,n})$ to the BSDE $(\xi /N,f^{k+1,n})$ such that $Y^{k+1,n}$ is bounded by $\tilde{c}$.\\
To apply Proposition~\ref{strongconv}, we have to check that the sequence $(Y^{k+1,n})_{n\in \NN}$ converges pointwise, that $(Z^{k+1,n},U^{k+1,n})_{n\in \NN}$ converges weakly in
$\mathcal{L}^2(B)\times \mathcal{L}^2(\widetilde{\mu})$ and that $f^{k+1,n}(0,0,u)$ can be locally bounded by an affine function in  $|u|^2$. 
Having telescoping sums in $(\ref{induct})$ implies that $(\overline{Y}^{l,n},\overline{Z}^{l,n},\overline{U}^{l,n})$ solves the BSDE $(\widehat{Y}_T+l\xi/N ,f^n)$. 
By the comparison
result of Proposition~\ref{comparegeneral}, the sequences $(\overline{Y}^{k,n})_{n\in \NN}$ and $(\overline{Y}^{k+1,n})_{n\in \NN}$ are monotonic (and bounded) in $n$ so
that finite limits $\lim_{n\rightarrow \infty}Y^{k+1,n}=\lim_{n\rightarrow \infty}\overline{Y}^{k+1,n}-\lim_{n\rightarrow \infty}\overline{Y}^{k,n}$ exists, $\PP\otimes {\rm d}t$-a.e..
By Lemma~\ref{BMOproperty}, the sequences $(\overline{Z}^{k,n},\overline{U}^{k,n})_{n\in\NN}$ and $(\overline{Z}^{k+1,n},\overline{U}^{k+1,n})_{n\in\NN}$ are bounded in $\mathcal{L}^2(B)\times \mathcal{L}^2(\widetilde{\mu})$;
hence $(Z^{k+1,n},U^{k+1,n})$ is weakly convergent in $\mathcal{L}^2(B)\times \mathcal{L}^2(\widetilde{\mu})$ along a subsequence which we still index by $n$  for simplicity.
Due to the Lipschitz continuity of $f^n$ and Assumption $2.$, we get for all $|u|\leq 2\tilde{c}$ that
\begin{align*}
\big|f_t^{k+1,n}(0,0,u)\big| &\leq \big|f_t^n\big(\overline{Y}_{t-}^{k,n},\overline{Z}_t^{k,n},u+\overline{U}_t^{k,n}\big)-f_t^n\big(\overline{Y}_{t-}^{k,n},\overline{Z}_t^{k,n},\overline{U}_t^{k,n}\big)\big| & \\
&\leq 2K_{f^n}^{y,z}\big(|\overline{Y}_{t-}^{k,n}|+\lVert \overline{Z}_t^{k,n}\rVert\big) +\widehat{K}\big(|u+\overline{U}_t^{k,n}|_t^2+|\overline{U}_t^{k,n}|_t^2\big)+2\widehat{L}_t \\
&\leq 2\widehat{K}|u|_t^2 +\widetilde{L}_t,
\end{align*}
where $\widetilde{L}_t=2K_f^{y,z}(\hat{c}+\sup_{n\in \NN}\lVert \overline{Z}_t^{k,n}\rVert^2+\sfrac{1}{4})+3\widehat{K}\sup_{n\in\NN}|\overline{U}_t^{k,n}|_t^2+2\widehat{L}_t$. Here we used that by induction hypothesis $(\overline{Z}^{k,n},\overline{U}^{k,n})_n$ is convergent so 
that $\sup_{n\in \NN}(\lVert \overline{Z}_t^{k,n}\rVert^2+|\overline{U}_t^{k,n}|_t^2)$ is $\PP\otimes {\rm d}t$-integrable by Lemma~\ref{helplemma2} along a subsequence which again for simplicity we still index by $n$.
This implies that $\widetilde{L}\in L^1(\PP\otimes {\rm d}t)$, and therefore by Proposition~\ref{strongconv}, the sequence $(Z^n,U^n):=(\overline{Z}^{N,n},\overline{U}^{N,n})$
converges in $\mathcal{L}^2(B)\times \mathcal{L}^2(\widetilde{\mu})$ to some $(Z,U)$ in $\mathcal{L}^2(B)\times \mathcal{L}^2(\widetilde{\mu})$ while $(Y^n):=(\overline{Y}^{N,n})$ converges to some
$Y$. Hence, $f^n(Y^n_-,Z^n,U^n)-f^n(Y_-,Z,U^n)$ converges to $0$ in $L^1(\PP\otimes {\rm d}t)$ and by Assumption $5.$ we have $f^n(Y^n_-,Z^n,U^n)\rightarrow f(Y_-,Z,U)$ in $L^1(\PP\otimes {\rm d}t)$.
The stochastic integrals $(Z^n-Z^m)\mal B$ and $(U^n-U^m)*\widetilde{\mu}$ belong to $\mathcal{S}^2\subset \mathcal{S}^1$ by Doob's inequality, with $\mathcal{S}^1$-norms being bounded by a multiple of $\lVert Z^n-Z^m\rVert_{\mathcal{L}^2(B)}$ and $\lVert U^n-U^m\rVert_{\mathcal{L}^2(\widetilde{\mu})}$ respectively.
Since $|Y^n-Y^m|_{\mathcal{S}^1}$ is dominated by
\[
\lVert f^n(Y^n_-,Z^n,U^n)-f^m(Y^m_-,Z^m,U^m)\rVert_{L^1(\PP\otimes {\rm d}t)} +C(\lVert Z^n-Z^m\rVert_{\mathcal{L}^2(B)}+\lVert U^n-U^m\rVert_{\mathcal{L}^2(\widetilde{\mu})})
\]
for some constant $C>0$ with the bound tending to $0$ as $n,m\rightarrow 0$, we can take $Y$ in $\mathcal{S}^1$ due to completeness of $\mathcal{S}^1$; see \cite[VII. 3,64]{DellacherieMeyer82}.\\
Finally, $(Y,Z,U)$ solves the BSDE $(\xi ,f)$ since the approximating solutions $(Y^n,Z^n,U^n)_{n\in\NN}$ of the BSDE $(\xi ,f^n)_{n\in\NN}$ converge to some $(Y,Z,U)\in\set$ and $f^n(Y^n_-,Z^n,U^n)$
tends to $f(Y_-,Z,U)$ in $L^1(\PP\otimes {\rm d}t)$. Hence, we have $\int_0^t f_s^n(Y^n_{s-},Z_s^n,U_s^n){\rm d}s \rightarrow \int_0^t f_s(Y_{s-},Z_s,U_s){\rm d}s$, $\int_0^t Z_s^n {\rm d}B_s\rightarrow \int_0^t Z_s {\rm d}B_s$ and
$U^n*\widetilde{\mu}_t \rightarrow U*\widetilde{\mu}_t$ $\PP$-a.s.\  (along a subsequence) for all $0\leq t\leq T$.
\end{proof}
The next corollary to Theorem \ref{infinitetheogeneral} provides conditions under which the $Z$-component of the JBSDE solution vanishes. Such is useful for applications   in a pure-jump context (see e.g.\ Section~\ref{subsubsec:CaseDiscAssetPrice} or \cite{CFJ16}) with weak PRP by $\widetilde{\mu}$ alone (cf.\ Example~\ref{example_WPRP}, Parts 1.,3.,4.), without a Brownian motion.
Clearly an independent Brownian motion can always be added by enlarging the probability space, but it is then natural to ask for a JBSDE solution with trivial $Z$-component, adapted to the original filtration. 
Instead of re-doing the entire argument leading to Theorem~\ref{infinitetheogeneral} but now for JBSDEs solely driven by a random measure $\widetilde{\mu}$ with generators without a $z$-argument, the next result gives a direct argument to this end.
An example where the corollary is applied is given in Section~\ref{subsubsec:CaseDiscAssetPrice}.
\begin{corollary}\label{cor:JBSDEZzero}
 Let $\mu=\mu^X$ be the random measure associated to a pure-jump process $X$, such that the compensated random measure $\widetilde{\mu}$ alone has the weak PRP (see (\ref{WPRP}))
 with respect to the usual filtration $\filt^X$ of $X$. Let $B$ be a $d$-dimensional Brownian motion independent of $X$.
With respect to $\filt:=\filt^{B,X}$, let $f, (f^n)_n,\xi$ satisfy the assumptions
 of Theorem \ref{infinitetheogeneral} with $\widehat{Z}=0$ and $f$ satisfying $({\rm \bf A}'_{\bm \gamma})$.
Let $\xi$ be in $ L^\infty(\FF^X_T)$ and $f,f^n$ be $\mathcal{P}(\filt^X)\otimes \mathcal{B}(\RR^{d+1})\otimes\mathcal{B}(L^0(\mathcal{B}(E)))$-measurable.
 Then the JBSDE $(\xi,f)$ admits a unique solution $(Y,Z,U)$ in $\set$, and we have that $Y$ is $\filt^X$-adapted,
$Z=0$,  and 
$U$ can be taken as measurable with respect to $\widetilde{\mathcal{P}}(\filt^X)$.
\end{corollary}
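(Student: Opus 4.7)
The approach is to first solve the JBSDE on the smaller filtration $\filt^X$ using the weak PRP of $\widetilde{\mu}$ alone, then to lift the small-filtration solution to $\filt^{B,X}$ by adjoining $Z=0$, and finally to invoke uniqueness on the larger filtration to pin down \emph{any} $\set$-solution.

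\emph{Step 1 (small-filtration solution).} Define the reduced generators $\tilde{f}_s(y,u) := f_s(y,0,u)$ and $\tilde{f}^n_s(y,u) := f^n_s(y,0,u)$; by the measurability hypothesis on $f,f^n$ these are $\mathcal{P}(\filt^X)\otimes \mathcal{B}(\RR)\otimes \mathcal{B}(L^0(\mathcal{B}(E),\lambda))$-measurable. I would verify that the hypotheses of Theorem~\ref{infinitetheogeneral} transfer to the pure-jump JBSDE with data $(\xi,\tilde{f})$ posed on $\filt^X$ (with $B$ trivial): $\xi$ is $\FF_T^X$-measurable by assumption, the condition $\widehat{Z}=0$ means the auxiliary triple $(\widehat{Y},\widehat{U})$ of assumption~1 may be taken $\filt^X$-adapted (since the defining identity $f^n(\widehat{Y}_{-},0,\widehat{U})\equiv 0$ involves only $\filt^X$-predictable ingredients), and the BMO-type, monotonicity, convergence and growth bounds are inherited by evaluation at $z=0$. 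Invoking Theorem~\ref{infinitetheogeneral} on $\filt^X$ (with PRP supplied by $\widetilde{\mu}$ alone, cf.\ Example~\ref{example_WPRP}.1,3,4) then produces a bounded solution $(Y^X,U^X)$ with $Y^X\in \mathcal{S}^\infty(\filt^X)$ and $U^X\in \mathcal{L}^2(\widetilde{\mu})$ being $\widetilde{\mathcal{P}}(\filt^X)$-measurable.

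\emph{Step 2 (lift to $\filt^{B,X}$).} Independence of $B$ and $X$ entails the immersion property $\filt^X \hookrightarrow \filt^{B,X}$: every $\filt^X$-(local) martingale remains an $\filt^{B,X}$-(local) martingale. Consequently the $(\filt^X,\PP)$-compensator $\nu$ of $\mu^X$ coincides with its $(\filt^{B,X},\PP)$-compensator, so $\widetilde{\mu}$ is unchanged under the enlargement and $\int_0^{\cdot}\int_E U^X_s(e)\,\widetilde{\mu}({\rm d}s,{\rm d}e)$ may be read identically in either filtration. The triple $(Y^X,0,U^X)$ thus lies in $\set$ on $\filt^{B,X}$ and, using $\tilde{f}_s(Y^X_{s-},U^X_s)=f_s(Y^X_{s-},0,U^X_s)$, it satisfies the original JBSDE $(\xi,f)$ on $\filt^{B,X}$.

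\emph{Step 3 (conclude by uniqueness).} Since $f$ satisfies $({\rm \bf A}'_{\bm \gamma})$, part~(ii) of Theorem~\ref{infinitetheogeneral} yields uniqueness in $\set$ of bounded solutions to $(\xi,f)$ on $\filt^{B,X}$. Hence any $(Y,Z,U)\in\set$ coincides with $(Y^X,0,U^X)$, giving $Z=0$, $\filt^X$-adaptedness of $Y$, and, modulo the representative choice from Lemma~\ref{basicprops2}, $\widetilde{\mathcal{P}}(\filt^X)$-measurability of $U$.

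The main obstacle is the compensator invariance used in Step~2: one must ensure that enlarging the filtration from $\filt^X$ to $\filt^{B,X}$ by the independent Brownian motion $B$ does not alter the compensator of $\mu^X$, so that the $\widetilde{\mu}$ appearing in (\ref{BSDE}) on $\filt^{B,X}$ is the same object as the one used to build the solution on $\filt^X$. This is a classical consequence of independence (immersion/H-hypothesis for initial enlargement by an independent factor), but it is the structural ingredient that makes a small-filtration solution serve on the large filtration. A minor auxiliary point is checking that the auxiliary triple in assumption~1 of Theorem~\ref{infinitetheogeneral} may indeed be chosen $\filt^X$-adapted when $\widehat{Z}=0$.
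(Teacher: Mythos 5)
Your overall strategy --- construct a solution on the pure-jump filtration $\filt^X$, lift it to $\filt^{B,X}$ via independence/immersion, and conclude by uniqueness under $({\rm \bf A}'_{\bm \gamma})$ --- is a genuinely different route from the paper's. The paper never solves anything on $\filt^X$: it adjoins a second Brownian motion $B'$ independent of $(B,X)$, applies Theorem~\ref{infinitetheogeneral} to the $z$-free generator $\widetilde{f}:=f(\cdot,0,\cdot)$ in the three filtrations $\filt^{B,X}$, $\filt^{B',X}$ and $\filt^{\bar{B},X}$ with $\bar{B}=(B,B')$ (where the framework of Section~\ref{sec:prelim} applies literally, cf.\ Example~\ref{example_WPRP}.3), identifies the three solutions by uniqueness in the largest filtration, deduces $Z\mal B=Z'\mal B'$ and hence $Z=0$ by strong orthogonality, gets $\filt^X$-adaptedness of $Y$ from $Y=Y'$, and obtains the measurability of $U$ from the weak PRP in $\filt^X$. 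Your Steps 2 and 3 are sound and indeed rest on the same immersion/compensator-invariance and uniqueness ingredients that the paper uses implicitly when it regards sub-filtration solutions as solutions in $\filt^{\bar{B},X}$.

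The gap is in Step 1. Theorem~\ref{infinitetheogeneral} (and the chain Proposition~\ref{comparegeneral}, Proposition~\ref{finitetheogeneral}, Proposition~\ref{strongconv} behind it) is formulated for a filtration carrying a Brownian motion $B$ with the joint weak PRP (\ref{WPRP}); on $\filt^X$ no Brownian motion is adapted (PRP by $\widetilde{\mu}$ alone forces continuous local martingales to be constant), so ``invoking Theorem~\ref{infinitetheogeneral} on $\filt^X$ with $B$ trivial'' is not an application of the stated theorem but an appeal to an unproved pure-jump variant --- precisely the re-derivation that the corollary is designed to avoid, as the paper states in the sentence preceding it. Moreover, the corollary's hypotheses are imposed with respect to $\filt^{B,X}$ and do not transfer to $\filt^X$ for free: the auxiliary triple $(\widehat{Y},0,\widehat{U})$ of assumption~1 and the BMO-martingale $M$ of assumption~4 are only given as $\filt^{B,X}$-adapted objects, and your parenthetical claim that the triple ``may be taken $\filt^X$-adapted since the defining identity involves only $\filt^X$-predictable ingredients'' is an assertion, not an argument: existence of an $\filt^{B,X}$-adapted zero of the generator family does not by itself yield an $\filt^X$-adapted one, nor does an $\filt^{B,X}$-BMO bound yield an $\filt^X$-BMO bound. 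These points can plausibly be repaired (the Section~\ref{sec:exandu} proofs do go through with trivial Brownian part, and suitable $\filt^X$-versions of the auxiliary objects can be produced with extra work), but as written Step~1 is the missing piece, and it is exactly the piece the paper's $B'$-trick circumvents.
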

\begin{proof}
 Let $B'$ be a ($1$-dimensional) Brownian motion independent of $(B,X)$. Then $\bar{B} := (B,B')$ is a $(d+1)$-dimensional Brownian motion independent of $X$. Let
 $\filt':=\filt^{B',X}$ and $\bar{\filt}:=\filt^{\bar{B},X}$ denote the usual filtrations of $(B',X)$ and $(\bar{B},X)$. As in Example \ref{example_WPRP}.3., $(B,\widetilde{\mu})$,
 $(B',\widetilde{\mu})$ and $(\bar{B},\widetilde{\mu})$ each admits the weak PRP w.r.t.\  $\filt,\filt'$ and $\bar{\filt}$ respectively.
 Now consider the generator function $\widetilde{f}$ that does not depend on $z$ and is defined by $\widetilde{f}_t(y,u) := f_t(y,0,u)$. Because $\widehat{Z}=0$, the conditions for Theorem \ref{infinitetheogeneral}
 are met by $\widetilde{f}^n:=f^n(\cdot,0,\cdot)$. In addition, $\widetilde{f}$ satisfies condition $({\rm \bf A}'_{\bm \gamma})$ since $f$ does. Since $\xi$ is $\FF^X_T$-measurable and $\widetilde{f}$ is $\mathcal{P}(\filt^X)\otimes \mathcal{B}(\RR)\otimes\mathcal{B}(L^0(\mathcal{B}(E)))$-measurable, 
 then by Theorem~\ref{infinitetheogeneral} the JBSDE $(\xi,\widetilde{f})$ 
simultaneously admits unique solutions $(Y,Z,U)$, $(Y',Z',U')$ and $(\bar{Y},\bar{Z},\bar{U})$
 in the respective $\mathcal{S}^\infty\times \mathcal{L}^2(\cdot)\times\mathcal{L}^2(\widetilde{\mu})$-spaces for each of the filtrations $\filt,\filt'$ and $\bar{\filt}$. 
 Noting that both $\filt$ and $\filt'$ are sub-filtrations of $\bar{\filt}$, we get by uniqueness of $(\bar{Y},\bar{Z},\bar{U})$ that $Z\mal B = Z'\mal B' = \bar{Z}\mal\bar{B}$ and that $Y$ is $\filt^X$-adapted. The former implies
$Z=Z'=0$ by the strong orthogonality of $B$ and $B'$. The claim follows, by noting that the JBSDE gives the (unique) canonical decomposition of the special semimartingale $Y$  and using weak predictable martingale representation in $\filt^X$.
\end{proof}

A natural ansatz to approximate an $f$ of the form $(\ref{generator})$ with $\lambda (A)=\infty$ is by taking
\begin{align} \label{fapprox}
f_t^n(y,z,u):= \widehat{f}_t(y,z)+\int_{A^n} g_t(u(e),e)\, \zeta (t,e)\, \lambda ({\rm d}e),
\end{align}
for an increasing sequence $(A_n)_{n\in \NN}\uparrow A$ of measurable sets with $\lambda (A_n)<\infty$ (as $\lambda$ is $\sigma$-finite).
\begin{theorem}[Wellposedness, infinite activity of jumps]   \label{infinitetheo}
Let the generator $f$ of the JBSDE be of the form (\ref{generator}) and let $\xi$ be in $ L^{\infty}(\FF_T)$. Let $\widehat{f}$ be Lipschitz continuous with respect to $(y,z)$ uniformly in $(\omega,t,u)$, and 
let $u\mapsto g(t,u,e)$ be absolutely continuous in $u$, for all $(\omega ,t,e)$, with its density function 
$g'(t,u,e)$ being  
 strictly greater than $-1$ and locally bounded (in $u$) from above. 

Assume that 
\begin{enumerate}\item[1.]\label{zerosolution2} there exists $(\widehat{Y},\widehat{Z},\widehat{U})\in \set$ with $|\widehat{U}|_{\infty}<\infty$, $\widehat{f}_t(\widehat{Y}_t,\widehat{Z}_t)\equiv 0$, $g_t(\widehat{U}_t(e),e)\equiv 0$,
\item[2.] the function $g$ is locally bounded in $|u|^2$ uniformly in $(\omega ,t,e)$, i.e.\ locally in $u$ (for any bounded neighborhood $N$ of $0$) there exists a $K>0$ such that $|g_t(u,e)|\leq K|u|^2$ (for all $u\in N$),
\item[3.] and there exists $D:\RR \mapsto \RR$ continuous such that 
either  
$g\geq 0$ and 
$\widehat{f}_t(y,z)\geq D(y)$ for $|y|\leq \hat{c}:=|\widehat{Y}|_{\infty}+(|\widehat{U}|_{\infty}/2)+|\xi |_{\infty}\exp (K_{\widehat{f}}^{y,z}T)$, or $g\leq 0$ and $\widehat{f}_t(y,z)\leq D(y)$ for $|y|\leq \hat{c}$.
\end{enumerate}
Then
\begin{enumerate}
\renewcommand{\labelenumi}{\roman{enumi}}
\item $\mskip-10mu )$  there exists a solution $(Y,Z,U)\in \set$ to the JBSDE and for each solution triple the stochastic integrals $\int Z\, {\rm d}B$ and $U*\widetilde{\mu}$ are {\rm BMO}-martingales, and
\item $\mskip-10mu )$ 
this solution is unique if  moreover the function $g$ satisfies condition $({\rm \bf A}_{\rm \bf infi})$. 
\end{enumerate}
Finally, the same statements hold if condition $1.$ is replaced by assuming that $f$ is not depending on $y$, i.e.\ $f_t(y,z,u)=f_t(z,u)$, and that $\widehat{f}$ is bounded.
\end{theorem}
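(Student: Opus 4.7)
The plan is to reduce to the finite-activity setting through the natural approximation $(f^n)_{n\in\NN}$ from~(\ref{fapprox}) with an increasing sequence $A_n\uparrow A$ of $\lambda$-finite-measure sets (available by $\sigma$-finiteness of $\lambda$), and then to invoke the monotone stability Theorem~\ref{infinitetheogeneral}. One first checks that each $f^n$ satisfies condition $({\rm \bf A}_{\rm \bf fin})$ of Definition~\ref{defAfininfi}: $\widehat{f}$ is Lipschitz in $(y,z)$, the density $g'>-1$ is locally bounded from above, and $\lambda(A_n)<\infty$. Note that the local quadratic bound in assumption~2 forces $g_t(0,e)=0$, so $f^n_t(0,0,0)=\widehat{f}_t(0,0)$, with $|\widehat{f}_t(0,0)|\le K_{\widehat{f}}^{y,z}(|\widehat{Y}|_\infty+\lVert\widehat{Z}_t\rVert)$ thanks to Lipschitzness of $\widehat{f}$ and $\widehat{f}(\widehat{Y},\widehat{Z})\equiv 0$. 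Hence $(B_{\gamma^n})$ holds with uniform Lipschitz constant $K_f^{y,z}=K_{\widehat{f}}^{y,z}$, and Proposition~\ref{finitetheo} provides unique $\set$-solutions $(Y^n,Z^n,U^n)$ to the approximating BSDEs $(\xi,f^n)$.

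Next I would verify the five hypotheses of Theorem~\ref{infinitetheogeneral}: (1) the given triple gives $f^n(\widehat{Y}_-,\widehat{Z},\widehat{U})\equiv 0$ since $\widehat{f}(\widehat{Y},\widehat{Z})\equiv 0$ and $g(\widehat{U}(e),e)\equiv 0$; (2) assumption~2 yields $|f^n_t(0,0,u)|\le|\widehat{f}_t(0,0)|+K|u|_t^2$ on the relevant bounded range; (3) the fixed sign of $g$ from assumption~3 combined with $A_n\uparrow A$ produces monotone convergence $f^n\uparrow f$ (if $g\ge 0$) or $f^n\downarrow f$ (if $g\le 0$); (5) the decomposition
\[
f^n(Y_-,Z,U^n)-f(Y_-,Z,U)=\int_{A_n}\bigl[g(U^n(e),e)-g(U(e),e)\bigr]\zeta\lambda({\rm d}e)-\int_{A\setminus A_n}g(U(e),e)\zeta\lambda({\rm d}e)
\]
tends to zero in $L^1(\PP\otimes{\rm d}t)$: the second piece by dominated convergence using $|g(U,e)|\le K|U|^2$ and $U\in\mathcal{L}^2(\widetilde{\mu})$, and the first piece by local Lipschitz continuity of $g$ in $u$ combined with the uniform boundedness of the approximating $U^n$ relevant in the theorem's application.

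The main obstacle is Condition~(4), the uniform BMO-bound on the integrated truncated generators. Assumption~3 is tailored to this step: say $g\ge 0$ and $\widehat{f}_t(y,z)\ge D(y)$ for $|y|\le \hat{c}$; since the truncation $\kappa$ keeps the $y$-argument inside $[-\hat{c},\hat{c}]$ and $g\ge 0$, we get $f^{n,\hat{c}}_t(y,z,u)\ge D(\kappa(t,y))\ge m:=\min_{[-\hat{c},\hat{c}]}D$, which is finite by continuity of $D$. Consequently
\[
-\int_t^T f^{n,\hat{c}}_s(Y_{s-},Z_s,U_s)\,{\rm d}s\le(-m)_+(T-t)=\langle M\rangle_T-\langle M\rangle_t\quad\text{for}\quad M_\cdot:=\sqrt{(-m)_+}\,B^1_\cdot\in{\rm BMO}(\PP),
\]
and the $g\le 0$ case is symmetric with an analogous upper bound on $\int_t^T f^{n,\hat{c}}$. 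Theorem~\ref{infinitetheogeneral}-i) then delivers existence of $(Y,Z,U)\in\set$ solving $(\xi,f)$, and Lemma~\ref{BMOproperty} combined with the same BMO-bound applied to the limit triple yields the BMO-property of $\int Z\,{\rm d}B$ and $U\ast\widetilde{\mu}$.

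For uniqueness under $({\rm \bf A}_{\rm \bf infi})$, Example~\ref{exagamma}.3 immediately gives $({\rm \bf A}'_{\bm \gamma})$, which is precisely the ingredient required by Theorem~\ref{infinitetheogeneral}-ii). In the alternative formulation (no assumption~1, $f=f_t(z,u)$, $\widehat{f}$ bounded), I would reduce to the main case by the shift $\tilde{\xi}:=\xi+\int_0^T\widehat{f}_s(0)\,{\rm d}s\in L^\infty(\FF_T)$ and $\tilde{f}_t(z,u):=f_t(z,u)-\widehat{f}_t(0)$, so that $\tilde{\widehat{f}}(0)=0$ and $(0,0,0)$ plays the role of the zero triple in assumption~1 for $\tilde{f}$; boundedness of $\widehat{f}$ provides a constant $D$ making assumption~3 trivial for $\tilde{f}$, and translating back via $Y_t:=\tilde{Y}_t-\int_0^t\widehat{f}_s(0)\,{\rm d}s$ recovers the original JBSDE solution, with BMO-property and uniqueness carrying over.
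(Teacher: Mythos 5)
Your proposal is correct and follows essentially the same route as the paper's own proof: approximating $f$ by the $f^n$ of (\ref{fapprox}), verifying $(B_{\gamma^n})$ and conditions 1--5 of Theorem~\ref{infinitetheogeneral} (condition 4 via the one-sided bound through $D$ on the truncated generators, condition 5 via the same splitting into an $A_n$-part and an $A_n^c$-part handled by local Lipschitzness, the quadratic bound and Lemma~\ref{helplemma2}), deducing uniqueness from $({\rm \bf A}_{\rm \bf infi})\Rightarrow({\rm \bf A}'_{\bm\gamma})$ via Example~\ref{exagamma}.3, and treating the $y$-independent case by the identical shift $\widetilde{\xi}=\xi+\int_0^T\widehat{f}_s(0)\,{\rm d}s$, $\widetilde{f}=f-\widehat{f}(0)$. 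No substantive deviation or gap relative to the paper's argument.
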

\begin{proof}
We check that the assumptions of Theorem~\ref{infinitetheogeneral} are satisfied.
Clearly conditions $1.$ and $2.$ are sufficient for assumptions $1.$ and $2.$ in Theorem~\ref{infinitetheogeneral}. The $f^n$ given by $(\ref{fapprox})$ satisfy conditions~$(B_{\gamma^n})$ (cf.\ Example~\ref{exagamma} and note $\lambda(A_n)<\infty$) and the sequence $(f^n)$ is
either monotone increasing or monotone decreasing, depending on the sign of $g$. For the next assumption $4.$, $f^{n,\hat{c}}$ is bounded from above (or resp. below) by $\sup_{|y|\leq \hat{c}}D(y)$
(respectively $\inf_{|y|\leq \hat{c}}D(y)$).
To show that also condition $5.$ of Theorem~\ref{infinitetheogeneral} holds, we prove that $g_t(U_t^n(e),e)\,\mathds{1}_{A_n}(e)$ converge to $g_t(U_t(e),e)$ in $L^1(\PP\otimes \nu)$
as $n\to \infty$ for $U^n\rightarrow U$ in $\mathcal{L}^2(\widetilde{\mu})$, recalling $(\ref{nuzetadensity})$. We set $B_n:=\big(g_t(U_t^n(e),e)-g_t(U_t(e),e)\big)\,\mathds{1}_{A_n}(e)$ and $C_n:=g_t(U_t(e),e)\,\mathds{1}_{A_n^c}(e)$.
Both sequences $(B_n)_{n\in\NN}$ and $(C_n)_{n\in\NN}$ converge to $0$ $\PP\otimes \nu$-a.e.\  since $U^n\rightarrow U$ in $L^2(\PP\otimes \nu)$, $g$ is locally Lipschitz in $u$ and
$A_n^c\downarrow \emptyset$. Moreover, they are bounded by integrable random variables. In particular, $B_n$ is bounded by $\widehat{K}\big(\sup_{n\in\NN}|U_t^n(e)|^2+|U_t(e)|^2\big)$ for some
$\widehat{K}>0$ which is integrable along a subsequence due to Lemma~\ref{helplemma2}. Hence applying the dominated convergence theorem yields the desired result.\\
In the alternative case without the Assumption $1.$, existence is still guaranteed. Indeed, let $f_t(y,z,u)=f_t(z,u)$ and $\widehat{f}$ be bounded. Denoting $\widetilde{f}_t(z,u):=f_t(z,u)-f_t(0,0)$ and
$\widetilde{\xi}:=\xi +\int_0^T f_t(0,0)\, {\rm d}t$, there exists a unique solution $(\widetilde{Y},Z,U)$ in $\set$ to the BSDE $(\widetilde{\xi},\widetilde{f})$ with $\int Z\, {\rm d}B$ and $U*\widetilde{\mu}$ being
BMO-martingales by the first version of this theorem and noting that $g_t(0,e)\equiv 0$ and $f_t(0,0)=\widehat{f}_t(0)$ is bounded. Taking $Y_t:=\widetilde{Y}_t-\int_0^t \overline{f}_s(0,0)\, {\rm d}s$,
we obtain that $(Y,Z,U)$ solves the BSDE with the data $(\xi ,f)$. If moreover the function $f$ satisfies $({\rm \bf A}_{\rm \bf infi})$, then $f$ satisfies $({\rm \bf A}'_{\bm \gamma})$ (cf.\ Example \ref{exagamma}.3.) and 
hence uniqueness follows from applicability of the comparison argument in Proposition~\ref{comparegeneral}.
\end{proof}

\begin{example}
A function $g$ is locally bounded in $|u|^2$ in the sense of condition 2. in Theorem~\ref{infinitetheo} if, for instance, $ u\mapsto g_t(u,e)$ is twice differentiable for any $(\omega ,t,e)$, 
with the second derivative in $u$ being locally bounded uniformly in $(\omega ,t,e)$, and $g_t(0,e)\equiv g'_t(0,e)\equiv 0$ vanishing.
\end{example}

\begin{example}
An example for a  generator that satisfies the assumption of Theorem~\ref{infinitetheo} but has super-exponential growth 
 is
$f$ of the form (\ref{generator}) with $\widehat{f}\equiv0$ and $g_t(u)=\exp(|u^+|^2)-1$.
 Here exists, in general,  no $\gamma\in (0,\infty)$  
 such that $-\frac{1}{\gamma}(e^{-u/\gamma}+u/\gamma-1)\le g_t(u) \le  \frac{1}{\gamma}(e^{u/\gamma}-u/\gamma-1)$ holds for all $u$ and $t$. Thus, the example appears not to satisfy exponential growth assumptions as formulated, e.g., in \cite{AntonelliMancini16}[Assumption (H), Thm.1], \cite {ElKarouiMatoussiNgoupeyou16}[2.condition, Def.5.6]
or \cite{KPZ15}[Assumption~3.1].
\end{example}

Note that convexity is not required for our theorems on comparison, existence and uniqueness for JBSDEs.
Many relevant applications are convex in nature but not all, see examples in Section~\ref{subsubsec:CaseDiscAssetPrice}.

\section{Examples and applications: optimal control in finance}
 \label{sec:appl}

 Results for JBSDEs in the  
literature commonly rely on combinations of several, often quite technical,  assumptions. But their scope can be difficult to judge at first sight without examples,  and to verify them may be not easy. 
This section discusses key applications that JBDEs have found in mathematical finance, and it illustrates
 by concrete examples  the applicability and the scope of the theory from previous sections.
The examples do also help to shed some light on connections and differences to related  literature.
Counter examples might caution against potential pitfalls.

The applications in Section~\ref{subsect:ExpoUtilMax} are about exponential utility maximization, possibly with an additive liability or non-convex constraints.
This problem  is closely related to the entropic risk measure and to (exponential) utility indifference valuation; It has indeed been a standard motivation for much of the (quadratic, non-Lipschitz)  
JBSDE theory, cf.\  \cite{Becherer06,BarrieuElKaroui09,Morlais09,Becherer10uiv,LaevenStadje14,KPZ15}. 
A result on existence of a solution for the specific  
JBSDE of this application has been presented in \cite{Morlais10}, being more general in some 
 aspects (jump-diffusion stock price)  but less so in others (multiple assets, time-inhomogeneous $\mu$).  
Section~\ref{subsubsec:powut} shows how a change of coordinates can transform a JBSDE, which arises from an optimal control problem for power utility maximization
but appears to be out of scope at first, into a JBSDE for which theory of Section~\ref{sec:exandu} can be applied to derive optimal controls and fully characterize the solution to the control problem by JBSDE solutions, 
like in \cite{Huetal05,HuLiangTang18}, by using martingale optimality principles. To our best knowledge, the considered power-utility problem with jumps and a multiplicative liability is solved for the first time in this spirit.
%Based on control theoretic arguments,  \cite{Nutz12} provides a general analysis of power utility maximization, including a  characterization of the so-called opportunity process in terms of a semimartingale BSDE, whose existence is inferred
%from existence of the optimal value process,  obtained by some other means. Such an approach typically requires convexity conditions, e.g.\ for convex duality methods, which can be useful, of course, but are not a necessity for BSDE theory in general (cf.\ e.g.\ \cite{Huetal05} for non-convex constraints) and appear in some approaches, 
%e.g.\ \cite[Appendix A]{LaevenStadje14}.
Finally, Section~\ref{subsec:gdprice}
derives  JBSDE solutions  for the no-good-deal valuation problem
  in incomplete 
 markets, which is posed over a multiplicatively stable sub-family of arbitrage-free pricing measures. 
 Also here, where the (non-linear) JBSDE generator is even Lipschitz, the slight generalization of
 Proposition~\ref{comparegeneral} to the classical 
comparison result by \cite{Royer06} is useful;
Indeed, the process $\gamma$ in (\ref{eq:IneqonGenwithGammadiffFromRoyer}) is such
that the martingale condition (\ref{gamma**}) for Proposition~\ref{comparegeneral} can be readily verified,
 while the same appears not  clear for  condition $({\rm \bf A}_{\bm \gamma})$  in \cite[Thm.2.3]{Royer06}.

Sections~\ref{subsubsec:CaseContAssetPrice}, \ref{subsubsec:powut} and \ref{subsec:gdprice}  consider models for a financial market within the framework of Section~\ref{sec:prelim}, consisting of one savings account with zero interest rate (for simplicity)  and $k$ risky assets ($k\leq d$), whose discounted prices  
evolve according to the stochastic differential equation
\begin{align}\label{eq:continuousStock}
{\rm d}S_t={\rm diag}(S_t^i)_{1\leq i\leq k}\sigma_t (\varphi_t {\rm d}t+{\rm d}B_t)=:{\rm diag}(S_t){\rm d}R_t, \quad t\in [0,T], 
\end{align}
with $S_0\in (0,\infty)^k$, where the market price of risk $\varphi$ is a predictable $\mathbb{R}^d$-valued process,
with  $\varphi_t \in \mathrm{Im}\, \sigma_t^{T}=(\mathrm{Ker}\, \sigma_t)^{\bot}$ for all $t\leq T$,
 and $\sigma$ is a predictable $\mathbb{R}^{k\times d}$-valued process such that $\sigma$ is of full rank $k$ (i.e.\ $\det (\sigma_t\sigma_t^{T})\neq 0$ $\PP\otimes {\rm d}t$-a.e.) and integrable w.r.t.\ 
\(
\widehat{B}:=B+\int_0^\cdot \varphi_t\, {\rm d}t.
\)
We take the market price of risk $\varphi$ to be bounded $\mathbb{P}\otimes {\rm d}t$-a.e..
The market is free of arbitrage in the sense that the set ${\cal M}^{\rm e}$ of equivalent local martingale measures for $S$
is non-empty. In particular,  ${\cal M}^{\rm e}$ contains the minimal martingale measure
\begin{align}
{\rm d}\widehat{\mathbb{P}}:=\mathcal{E}\left( -\varphi\mal B \right)_T {\rm d}\mathbb{P}=\exp \Big( -\varphi\mal B_T-\frac{1}{2}\int_0^T |\varphi_t|^2\, {\rm d}t \Big) {\rm d}\mathbb{P}, \label{newmeasure}
\end{align}
under which $\widehat{B}$ is a Brownian motion and $S$ is a local martingale by Girsanov's theorem.
Clearly, the market (\ref{eq:continuousStock}) is  incomplete in general (even if $k=d$ and $\sigma$ is invertible, when the random measure is not trivial, filtration then being non-Brownian), cf.\  Example~$\ref{example_WPRP}$.

\subsection{Exponential utility maximization}\label{subsect:ExpoUtilMax}

For a market with stock prices as in (\ref{eq:continuousStock}), consider the expected utility maximization problem
\begin{align} \label{utilitymax1}
v_t(x) =\esssup_{\theta\in \Theta} \EE \big(u\big(X_T^{\theta,t,x}-\xi \big)|\FF_t\big), \quad t\le T,\ x\in \RR,
\end{align}
for the exponential utility function $u(x):=-\exp (-\alpha x)$ with absolute risk aversion parameter $\alpha >0$, 
with some additive liability $\xi$ and for wealth processes $X^{\theta,t,x}$ of admissible trading strategies $\theta$ as defined below.
 We are going to show, how the value process $v$ 
and optimal trading strategy $\theta^*$ for the problem (\ref{utilitymax1}) can be fully described by JBSDE solutions for two distinct problem cases.

\subsubsection{Case with continuous price processes of risky assets}\label{subsubsec:CaseContAssetPrice}

The set of available trading strategies $\Theta$ consists of all $\mathbb{R}^d$-valued, predictable, S-integrable processes $\theta$ for which the following two conditions are satisfied:
$\mathbb{E}( \int_0^T |\theta_t|^2\, {\rm d}t )$ is finite,
and the family  $\big\{ \exp( -\alpha \int_0^{\tau} \theta_t {\rm d}\widehat{B}_t )\,\big| \,\tau \mbox{ stopping time, }\tau \leq T \big\}$ of random variables is uniformly integrable under $\PP$.
Starting from initial capital $x\in \RR$ at some time $t\le T$, the wealth process  corresponding to investment strategy $\theta\in \Theta$ is given by $X_s^{\theta}=X_s^{\theta,t,x}=x+\int_t^s \theta_u\, {\rm d}\widehat{B}_u$, $s\in [t,T]$.

For this subsection, we assume $k=d$ (so $f$ will not be quadratic in $z$). Let $(Y,Z,U)$ in $\mathcal{S}_{\widehat{\PP}}^{\infty}\times \mathcal{L}_{\widehat{\PP}}^2(\widehat{B})\times \mathcal{L}_{\widehat{\PP}}^2(\widetilde{\mu})$ be the unique solution to the BSDE
\(
Y_t =\xi +\int_t^T f_s(Y_{s-},Z_s,U_s)\, {\rm d}s-\int_t^T Z_s\, {\rm d}\widehat{B}_s-\int_t^T \mskip-10mu \int_E U_s(e)\, \widetilde{\mu}({\rm d}s,{\rm d}e)
\)
under the minimal local martingale measure $\widehat{\PP}$ for the generator 
\begin{equation}
\label{fgenexput}
f_t(y,z,u):=-\frac{|\varphi_t|^2}{2\alpha}+\int_E \frac{\exp (\alpha u(e))-\alpha u(e)-1}{\alpha}\, \zeta (t,e)\, \lambda ({\rm d}e)\,
\end{equation}
which does exist by Theorem~\ref{infinitetheo}. Under $\PP$ the BSDE is of the form
\[
Y_t =\xi +\int_t^T f_s(Y_{s-},Z_s,U_s)-\varphi_sZ_s\, {\rm d}s-\int_t^T Z_s\, {\rm d}B_s-\int_t^T \mskip-10mu \int_E U_s(e)\, \widetilde{\mu}({\rm d}s,{\rm d}e).
\]
To prove optimality by a martingale principle one constructs, cf.\ \cite{Huetal05}, a family of processes $(V^{\theta})_{\theta \in \Theta}$ such that three conditions are satisfied:
{(i)}  $V_t^{\theta}=V_t$ is a fixed  $\mathcal{F}_t$-measurable bounded random variable invariant over $\theta\in \Theta$,
{(ii)} $V_T^{\theta}=-\exp (-\alpha (X_T^{\theta}-\xi ))=-\exp \big(-\alpha \big(x+\int_t^T \theta_s {\rm d}\widehat{B}_s-\xi \big)\big)$, and
{(iii)} $V^{\theta}$ is a supermartingale for all $\theta \in \Theta$ and there exists a $\theta^* \in \Theta$ such that $V_s^{\theta^*}$ ($s\in [t,T]$)
 is a $\PP$-martingale.
Then $\theta^*$ is the optimal strategy and $(V^{\theta^*}_s)_{s\in[t,T]}$ is the value process
of the control problem (\ref{utilitymax1}). Indeed,
 $\mathbb{E}\big(V_T^{\theta}\,\big|\,\mathcal{F}_t\big)\leq V_t^{\theta}=V_t^{\theta^*}=\mathbb{E}\big(V_T^{\theta^*}\,\big|\,\mathcal{F}_t\big)$ for each $\theta \in \Theta$  implies
$v_t(x)=\esssup_{\theta \in \Theta}\mathbb{E}\big(V_T^{\theta}\,\big|\,\mathcal{F}_t\big)=V_t^{\theta^*}$.
An ansatz
 $V^{\theta}= u(X^{\theta}-Y)$ yields
\begin{align*}
V_s^{\theta}&= V_t^{\theta} \exp\Big( \frac{\alpha^2}{2} \int_t^s \Big| \theta_r-Z_r-\frac{\varphi_r}{\alpha}\Big|^2\,{\rm d}r \Big)\, \mathcal{E}(M)_t^s \quad \mbox{for all }s\in [t,T],\quad\text{with} \\
M_t&= -\alpha \int_0^t \theta_r-Z_r\, {\rm d}\widehat{B}_r+\int_0^t \mskip-5mu \int_E \exp (\alpha U_r(e)-1)\, \widetilde{\mu}({\rm d}r,{\rm d}e) \quad \mbox{and} \quad \mathcal{E}(M)_t^s:=\frac{\mathcal{E}(M)_s}{\mathcal{E}(M)_t}.
\end{align*}
Therefore, $V^{\theta}$ is a supermartingale for all $\theta \in \Theta$ and a martingale for $\theta^{*}=Z+\varphi /\alpha$ due to the fact that $\EEE (M)$ is a (local)  martingale of the form
\[
\mathcal{E}(M)_s=\exp \Big( -\frac{\alpha^2}{2}\int_0^s |\theta_u-Z_u-\varphi_u/\alpha |^2\, {\rm d}u \Big)\, \exp \Big(-\alpha \Big(Y_0+\int_0^s\theta_u\, {\rm d}\widehat{B}_u-Y_s\Big)\Big)\,.
\]
Using the boundedness of $Y$, 
one readily obtains by arguments like in \cite{Huetal05,Morlais10} that $\EEE(M)$ is uniformly integrable and hence a martingale  (see\ e.g.\ eqn.~(4.19) in \cite{Becherer06}). This yields

\begin{example}\label{ExplExpU} Let $k=d$ and $\lambda (E)\leq \infty$.
Let $(Y,Z,U)\in \mathcal{S}_{\widehat{\PP}}^{\infty}\times \mathcal{L}_{\widehat{\PP}}^2(\widehat{B})\times \mathcal{L}_{\widehat{\PP}}^2(\widetilde{\mu})$ be the unique solution to the BSDE
$(\xi,f)$ under $\widehat{\PP}$ for generator $f$ from (\ref{fgenexput}).
 Then the strategy  $\theta^*=Z+{\varphi}/{\alpha}$
is optimal for the control  problem (\ref{utilitymax1}) and achieves at any time $t\le T$ the maximal expected exponential utility
$v_t(x)=-\exp (-\alpha (x-Y_t))=V^{\theta^*}_t$.
\end{example}
The exponential utility maximization problem is closely linked to the popular entropic convex risk measure, to which
we will further relate in Example~\ref{counterexplcomparison}.
Moreover, the solution to the utility maximization problem is intimately linked to the indifference valuation (also 
known as reservation price or compensating variation in economics)  for a contingent claim $\xi$ in incomplete markets under exponential 
utility preferences, see \cite{Becherer10uiv}. Indeed, denoting by $Y^{\xi}=Y$ the solution to the JBSDE from
Example~\ref{ExplExpU} for terminal data $\xi$, one can show that $Y^{\xi}-Y^0$ yields the  
 utility indifference valuation process, see \cite{ManiaSchweizer05,Becherer06}.  

\subsubsection{Case with discontinuous risky asset price processes}\label{subsubsec:CaseDiscAssetPrice}
We further illustrate the extend to which results by \cite{Morlais09,Morlais10}, who has pioneered the
stability approach to BSDE with jumps specifically for exponential utility, fit into our framework and demonstrate 
by concrete examples some notable differences in scope in relation to complementary approaches.
To this end, let us consider the same utility problem but now in
a financial market with pure-jump asset price processes, possibly of infinite activity (as e.g.\ in the CGMY model of \cite{CGMY02}), and with constraints on trading strategies. We note that a pure-jump setting 
appears as a natural setup for our JBSDE results, which admit for generators that are (roughly said)  'quadratic' in the $u$-argument but not in $z$-argument, differently from, e.g., \cite{Morlais10,KPZ15,LaevenStadje14,AntonelliMancini16,Yao17}. 

Let $\mu=\mu^L$ be the random measure associated to a pure-jump L\'evy process $L$ with L\'evy measure $\lambda({\rm d}e)$, on $E=\RR^1\setminus\{0\}$. Let $\filt=\filt^L$ be the usual filtration generated by $L$.
 The compensated random measure $\widetilde{\mu}=\widetilde{\mu}^L:=\mu^L-\nu$, with $\nu({\rm d}t,{\rm d}e)=\lambda({\rm d}e){\rm d}t$ of $L$ alone has the weak PRP 
w.r.t.\ the filtration $\mathbb{F}$ (see Example \ref{example_WPRP}.1.). 
 Note that $\mu$ could be of infinite activity, i.e.\ $\lambda(E) \le \infty$, for instance for $L$ being a Gamma process. 
In contrast to the setup of Section \ref{subsubsec:CaseContAssetPrice}, 
we consider now a financial market whose single risky asset prices evolves in a non-continuous fashion, 
being given by a pure-jump process  
\begin{equation*}
	{\rm d}S_t  =S_{t-}\Big(\beta_t{\rm d}t+\int_E\psi_t(e)\widetilde{\mu}({\rm d}t,{\rm d}e\Big)\quad \text{for  $t\in[0,T]$,  with $S_0\in (0,\infty)$,}
\end{equation*}
where $\beta$ is predictable and bounded, and  $\psi>-1$ is $\widetilde{\mathcal{P}}$-measurable, in $L^2(\PP\otimes\lambda\otimes {\rm d}t)\cap L^\infty(\PP\otimes\lambda\otimes {\rm d}t)$ and
satisfies $\int_E\lvert\psi_t(e)\rvert^2\lambda({\rm d}e)\le \text{const.}$ $\PP\otimes {\rm d}t$-a.e.. The set $\Theta$ of admissible trading strategies consists of all $\mathbb{R}$-valued 
predictable $S$-integrable processes $\theta\in L^2(\PP\otimes {\rm d}t)$, such that $\theta_t(\omega)\in C$ for all $(t,\omega)$, for a fixed compact set $C\subset \RR$ of trading constraint
containing $0$.
Interpreting trading strategies $\theta$ as amount of wealth invested into the risky asset yields wealth process $X^{\theta,t,x}$ from initial capital $x$ at time $t$ as
$$
X^{\theta,t,x}_s = X^{\theta,t,x}_t + \int_t^s\theta_u\frac{{\rm d}S_u}{S_{u-}} = x + \int_t^s\theta_u\Big(\beta_u{\rm d}u+\int_E\psi_u(e)\widetilde{\mu}({\rm d}u,{\rm d}e)\Big),\quad s\ge t.
$$
Because of the compactness of $C$ and the fact that $\psi\in L^2(\PP\otimes\lambda\otimes {\rm d}t)\cap L^\infty(\PP\otimes\lambda\otimes {\rm d}t)$, admissible strategies are bounded and for all $\theta\in\Theta$ 
one can verify that
\(
\left\lbrace \exp(-\alpha X^\theta_\tau)|\ \tau\ \text{an $\filt$-stopping time}\right\rbrace
\) {is uniformly integrable};
arguments being like in \cite[Lem.1]{Morlais10}.  Consider the JBSDE 
\begin{equation}\label{eq:JBSDEUMP}
	-{\rm d}Y_t = f(t,U_t){\rm d}t - \int_E U_t(e)\widetilde{\mu}({\rm d}t,{\rm d}e),\quad Y_T=\xi,
\end{equation}
with terminal condition $\xi\in L^\infty(\FF_T)$ and generator $f$ defined pointwise by 
\begin{equation}\label{eq:GenJBSDEUMP}
	f(t,u) := \inf_{\theta\in C}\Big(-\theta\beta_t+\int_Eg_\alpha\big(u(e)-\theta\psi_t(e)\big)\lambda({\rm d}e)\Big),\quad t\in[0,T],
\end{equation}
for the function $g_\alpha:\RR\to\RR$ with $g_\alpha(u) := {({\rm e}^{\alpha u}-\alpha u-1)}/{\alpha}$.
We have the following 
\begin{proposition}
Let $(Y,U)\in \mathcal{S}^{\infty}\times \mathcal{L}^2(\widetilde{\mu})$ be the  unique solution to the JBSDE (\ref{eq:JBSDEUMP}).
Then the strategy  $\theta^*$ such that $\theta^{*}_t$ achieves the infimum in (\ref{eq:GenJBSDEUMP}) for $f(t,U_t)$ is optimal for the control problem (\ref{utilitymax1}) and achieves at any 
  $t\in[0,T]$ the maximal expected exponential utility $v_t(x)=-\exp (-\alpha (x-Y_t))=V^{\theta^*}_t$.
\end{proposition}
\begin{proof}
Using the martingale optimality principle one obtains, like in the cited literature and  analogously to Section \ref{subsubsec:CaseContAssetPrice}, that if $(Y,U)\in\mathcal{S}^\infty\times \LL^2(\widetilde{\mu})$ is a solution to the JBSDE (\ref{eq:JBSDEUMP}) 
then the solution to the utility maximization problem  (\ref{utilitymax1})  is indeed given by $v_t(x) = u(x-Y_t)$ (recall that $u$ denotes the exponential utility function) with the strategy $\theta^*$ where $\theta^{*}_t(\omega)$ achieves the infimum $f(\omega,t,U_t(\omega))$ 
in (\ref{eq:GenJBSDEUMP}) for all $(\omega,t)$ being optimal (it exists by measurable selection \cite{Rockafellar}). 
To complete the derivation of this example, it thus just remains to show that the JBSDE (\ref{eq:JBSDEUMP}) indeed admits a unique solution, with trivial $Z$-component $Z=0$.  
This is shown by applying Theorem~\ref{infinitetheogeneral} and Corollary~\ref{cor:JBSDEZzero} since $\xi\in L^\infty(\mathcal{F}^L_T)$ and the generator $f$ does not have a $z$-argument and is $\filt^L$-predictable in $(t,\omega)$.
It is straightforward, albeit somewhat tedious, to verify that the conditions~1-5 and  $(B_{\gamma^n})$, $n\in \NN$, for Theorem~\ref{infinitetheogeneral} are indeed satisfied for the sequence of $\filt^L$-predictable generators functions 
\begin{equation*}
	f^n(t,u) := \inf_{\theta\in C}\Big(-\theta\beta_t+\int_{A_n}g_\alpha\big(u(e)-\theta\psi_t(e)\big)\lambda({\rm d}e)\Big),
\end{equation*}
where $(A_n)_n$ is a sequence of measurable sets with $A_n\uparrow E$ and $\lambda(A_n)<\infty$ for all $n\in\NN$, typically $A_n = (-\infty,-1/n]\cup[1/n,+\infty).$
Let us refer to \cite[Example~1.32]{KentiaPhD} for details of this verification, but explain here how to proceed further with the proof. 

By the first claim of Theorem~\ref{infinitetheogeneral} (together with Corollary~\ref{cor:JBSDEZzero}) one then gets existence of a solution $(Y,U)\in\mathcal{S}^\infty\times\LL^2(\widetilde{\mu})$ to the JBSDE (\ref{eq:JBSDEUMP}), such that $U\ast\widetilde{\mu}$
is a BMO-martingale. To obtain uniqueness by applying the second claim, we need to check that $f$ satisfies condition $({\rm \bf A}'_{\bm \gamma})$: To this end, we 
define 
\(
	\gamma^{u,u'}_t(e) :=  \sup_{\theta\in C}\gamma^{\theta,u,u'}_t(e)\mathds{1}_{\{u\ge u'\}} +\inf_{\theta\in C}\gamma^{\theta,u,u'}_t(e)\mathds{1}_{\{u< u'\}},
\)
for $\gamma^{\theta,u,u'}_t(e) := \int_0^1g'_\alpha\big(l(u-\theta\psi_t(e))+(1-l)(u'-\theta\psi_t(e))\big){\rm d}l$. Then 
(by Examples~\ref{exgamma} and \ref{exagamma}-2.) 
 we get 
\(
f(t,U_t) - f(t,U'_t) \le \int_E \gamma^{U,U'}_t(e)\big( U_t(e) - U'_t(e)\big)\lambda({\rm d}e)
\)
for all 
$U,U'$ with $\lvert U\rvert_\infty<\infty,\lvert U'\rvert_\infty<\infty$.
Now let $u,u'$ be bounded by $c>0$; Since $g'_\alpha(0)=0$, applying the mean-value theorem to $g'_\alpha$ in the expression of $\gamma^{\theta,u,u'}$ gives 
\(
	\big\lvert \gamma^{\theta,u,u'}_t(e)\big\rvert \le \sup_{|x|\le \tilde{c}}\lvert g''_{\alpha}(x)\rvert\big( \lvert u\rvert + \lvert u'\rvert +\lvert \theta\rvert \lvert \psi_t(e)\rvert \big) 
\)  
{ for all }$\theta\in C$, where $\tilde{c}:=c+\lVert \psi\rVert_\infty \text{diam}(C)$. This implies (for $c=\lvert U\rvert_\infty\vee\lvert U'\rvert_\infty<\infty$)
\begin{equation*}
	\sup_{\theta\in C}\big\lvert \gamma^{\theta,U,U'}_t(e)\big\rvert \le \sup_{|x|\le \tilde{c}}\lvert g''_{\alpha}(x)\rvert\Big( \lvert U_t(e)\rvert + \lvert U'_t(e)\rvert +\text{diam}(C) \lvert\psi_t(e)\rvert\Big).
\end{equation*}
Since $\lvert \inf_\theta\gamma^\theta\rvert \le \sup_\theta\lvert\gamma^\theta\rvert$, $\lvert \sup_\theta\gamma^\theta\rvert \le \sup_\theta\lvert\gamma^\theta\rvert$ and $\psi\ast\widetilde{\mu}$
is a BMO-martingale (as $\psi$ is bounded and $\int_E\lvert\psi_t(e)\rvert^2\lambda({\rm d}e) \le \text{const.},\ \PP\otimes {\rm d}t$-a.e.\ by assumption), then $\gamma^{U,U'}\ast\widetilde{\mu}$ is a BMO-martingale if $U\ast\widetilde{\mu}$ and
$U'\ast\widetilde{\mu}$ are, thanks to $\lvert U\rvert_\infty<\infty$ and $\lvert U'\rvert_\infty<\infty$. Hence $f$ satisfies $({\rm \bf A}'_{\bm \gamma})$. 
\end{proof}

\begin{example}\label{counterexplcomparison} (entropic convex risk measure)
Let us consider the special case  $\beta=\psi\equiv 0$ and $S\equiv 1$, i.e.\ the exponential utility problem {\em without} trading opportunities in a risky asset.
One gets the important and well known example 
of the (dynamic) 
entropic risk measure  $Y_t= (1/\alpha)\log \EE (\exp(\alpha \xi) |\FF_t)$ whose JBSDE description can be identified directly by exponential transformation.
In the setup of the present subsection, this JBSDE is covered by the
application study of \cite{Morlais10} and also by our comparison and wellposedness theorems, without any further conditions on the pure-jump L\'evy process. 
In contrast, let us demonstrate that the same scope is
not already offered by the seminal comparison Thm.2.5 of \cite{Royer06} because her key condition (${\rm \bf A}_{\bm \gamma}$) is not satisfied,  which is also supposed for results in \cite[as Assumpt.6.1 for wellposedness in Thm.6.3(i) and for comparison in Prop.6.4]{KPZ15}  (and is further used for applications in \cite{KPZ16}):
 Indeed for $\alpha:=1$, just consider a
 compound Poisson process $L$ (being of finite activity) with uniformly distributed jump heights, taking $\lambda({\rm d}x):=\mathds{1}_{(0,1]}{\rm d}x$ ($x\in E=\RR\setminus\{0\}$). 
Clearly, the generator $f(u)=\int_E \exp(u(x))-u(x)-1\,\lambda({\rm d}x)=:\int_E g(u(x))\lambda({\rm d}x)$ is not Lipschitz in $u\in L^2(\lambda)$.
With $u^{\pm}(x):=\frac 1 2 (\pm {x}^{-3/2}+nx)\mathds{1}_{(1/n,1]}(x)$ in $L^2(\lambda)$ for $n\in \NN$, 
we get $\int_0^1 ({\rm e}^{u^+}-{\rm e}^{u^-}-u^+ + u^-  ) {\rm d}\lambda \to \infty$ for $n\to \infty $
 while $\int_0^1(u^+-u^-)(x) \cdot(1\wedge |x|){\rm d}x \le \int_0^1 x^{-1/2}{\rm d}x<\infty$ for all $n$,
noting that $ {\rm e}^{u^+}-{\rm e}^{u^-}-u^+ + u^-\ge n x^{-1/2}\mathds{1}_{(1/n,1]}$.
 Thus, there  cannot be  
 constants $c_1\in (-1,0]$, $c_2<\infty$, such that $f(u)-f(v) \le \int_E
  (u-v)(x)\gamma^{u,v}(x)\,\lambda({\rm d}x)$ for all $u,v$, with $c_1(1\wedge|x|)\le \gamma^{u,v}(x) \le c_2(1\wedge|x|)$; 
This shows that condition (${\rm \bf A}_{\bm \gamma}$) in \cite{Royer06}
or Assumption~6.1 in \cite{KPZ15},
 are not satisfied here. Indeed, the  seminal (${\rm \bf A}_{\bm \gamma}$) condition from \cite{Royer06} implies Lipschitz continuity of the generator in $u$.

 Similarly,  a related condition on the jump measure  has been stated  in the assumptions of the main theorem in \cite[Thm.~1, see their inequality (4)]{AntonelliMancini16};  Their article further assumes (like also \cite{Yao17}, for instance)  finite jump activity in that $\lambda(E)<\infty$ holds (in our notation, they write $\eta$ for $\lambda$), what is true in the present example (but is not required for \cite{Royer06}, or our previous sections).
But  in this example there cannot exist constants $c_1\in (-1,0]$, $c_2<\infty$ such that $f(u)-f(u') \le \int_E  (u-u')(x)\gamma^{u,u'}(x)\,\lambda({\rm d}x)$ would hold for all $u,u'\in L^2(\lambda)$, with suitable functions $c_1 \le \gamma^{u,u'}(x) \le c_2 $.
Indeed,  the latter would imply  (using Cauchy-Schwarz inequality) that $f$ is Lipschitz in $u$ on $L^2(\lambda)$, what is not true.
Hence,
the assumptions for Theorem~1 in \cite{AntonelliMancini16} appear not satisfied for the entropic risk example,
and its conditions (with inequality (4) assumed for all $u,u'$)
would
 imply that  the JBSDE generator has to be Lipschitz continuous in $u\in L^2(\lambda$).
%%%***
Note that the aforementioned problem could be resolved if, e.g., an additional $L^\infty$-bound is available for the $u$-argument, for instance  by an exogenuous a-priori $L^\infty$-estimate on the $U$-component for bounded JBSDE solutions (like from Section~\ref{sec:comp}). We note  that  \cite{AntonelliMancini16} offer results for unbounded
JBSDE solutions (see also \cite{HuLiangTang18} for exponential utility in the continuous case without jumps).

\end{example}

\begin{example}\label{ex:counterexampKPZ}
We continue with the previous entropic risk example, but now take a standard Poisson process instead, i.e.\ $\lambda({\rm d}e) = \updelta_{\{1\}}({\rm d}e)$ as Dirac point measure at the fixed jump height $1$.
Then $L^2(\lambda)$ is isomorphic to $\RR$, and $f(u)=g(u)=\exp(u)-u-1$ for $u\in \RR$.
Obviously $g'$ and $g''$ are of exponential growth (in $u$) and cannot be bounded globally in $u\in \RR$ by an affine function
or by constants; hence Assumption~5.1(iii) for \cite[Thm.5.4]{KPZ15} cannot be satisfied.
Moreover, also Assumption~4.3(iii) for \cite[Thms.4.3, 5.4 and 6.3(ii)]{KPZ15}, noting their Lem.5.4,  appears clearly violated since
 (taking $u'=0$)   there exist no $\psi,c\in \RR$ such that $ |g(u)-\psi u| \le c |u|^2$ for all $u\in \RR$. 
\end{example}

Note that, since the function $(u,\theta)\mapsto f^\theta(\cdot,u) :=-\theta\beta_\cdot+\int_Eg_\alpha\big(u(e)-\theta\psi_\cdot(e)\big)\lambda({\rm d}e)$ is convex, the generator 
constructed as $f=\inf_{\theta\in C}f^\theta(\cdot,u)$  (cf.\ \ref{eq:GenJBSDEUMP}) would be convex in $u$ if the constraint set $C$ were assumed to be convex;  
But for non-convex trading constraints $C$ the generator 
can be non-convex in general. Similar constructions of generators are typical in this application context, 
see e.g.\ \cite[equation~(15)]{LaevenStadje14}.
Some results on JBSDE in the literature use convexity of the generator function but that
 can be restrictive for applications. 
For results in the present paper, convexity is not being assumed. 
Next, we give a  concrete application example where $f$ in (\ref{eq:GenJBSDEUMP}) for the (primal) control problem is indeed non-convex in $u$. The  example  shows, how non-convex constraints can lead to JBSDE generators which are non-convex in $u$. 
%
% This example is not covered by results for wellposedness and comparison of JBSDE as presented in \cite{LaevenStadje14,KPZ15,KPZ16}.
%
To this end, let us consider simple trading constraints that are non-convex by taking 
$
C:=\{\theta^0,\ldots,\theta^m\}\subset \RR
$ 
as a finite set including the zero $\theta^0:=0$.  Here $f$ of JBSDE (\ref{eq:JBSDEUMP}) becomes
$$
f(t,u)=\inf_{k\in\{0,\ldots,m\}}\Big(-\theta^k\beta_t+\int_Eg_\alpha\big(u(e)-\theta^k\psi_t(e)\big)\lambda({\rm d}e)\Big)\,.
$$
\begin{example} (An application where the generator is not convex and not continuously differentiable) 
Continuing with the above generator $f$, now let us take the  particular case where $\lambda({\rm d}e) = \updelta_{\{1\}}({\rm d}e)$,  i.e.\ $L$ is a standard Poisson process with constant jump height $1$, and let  $\alpha=1$ and $\beta=0$.
Observing that $L^2(\lambda)$ in the case of this simple example is isomorphic to $\RR$, we see that   
\begin{equation}\label{eq:simplegeneratorf}
f(t,u)=\min_{k\in\{0,\ldots,m\}}\left({\rm e}^{(u-\theta^k\psi_t)}-(u-\theta^k\psi_t) -1 \right),
\end{equation}
is clearly  non-convex in $u\in\mathbb{R}$, unless $\psi \equiv 0$ or $C=\{0\}$. Also, we observe that $u\mapsto f(t,u)$ is 
not continuously differentiable in  $u \in \mathbb{R}$ for this application.
But the function $f$ in (\ref{eq:simplegeneratorf}) is still absolutely continuous in $u$
 with density function being strictly greater than $-1$, locally bounded in u from above and locally of linear growth. 
Because this $f$  satisfies condition $({\rm \bf A}_{\rm \bf infi})$, existence and uniqueness for the 
corresponding JBSDE (\ref{eq:JBSDEUMP}) solution  can be  
obtained  by Theorem~\ref{infinitetheo}.

Similarly as before, one can check that in this example the assumptions of \cite[Thm.2.5]{Royer06} and
 \cite[Thm.5.4, Thm.6.3(i)-(ii), Prop.6.4]{KPZ15} for comparison and wellposedness of JBSDE are not satisfied;
The example clearly shows how non-convex constraints can indeed lead to a non-convex $f$, which does not satisfy the conditions  for the JBSDE results of \cite[Thm.A28, Cor.A29, Prop.A30, being used further in the proofs for
Thms.4.3, 4.5, all involving convexity assumption ``(c)'']{LaevenStadje14}.
\end{example}

We proceed next with examples beyond exponential utility, that was the topic in \cite{Morlais09,Morlais10}.

\subsection{Power utility maximization}\label{subsubsec:powut}

Again for the market with stock price dynamics (\ref{eq:continuousStock}), we consider the utility maximization problem
\begin{align} \label{utilitymax2}
v_t(x) =\esssup_{\theta\in \Theta}\, \EE \big(u\big(X_T^{\theta,t,x}\big)\xi\, \big\lvert\,\FF_t\big)=  \frac{1}{\gamma}\esssup_{\theta\in \Theta}\, \EE \big(u\big(X_T^{\theta,t,x} \xi'\big)\, \big\lvert\, \FF_t\big)\,,\quad t\le T,\, x>0,
\end{align}
 for power utility $u(x)=x^{\gamma}/\gamma$ with relative risk aversion $1-\gamma>0$ for $\gamma \in (0,1)$,
with multiplicative liability $\xi$ (alternatively, $\xi':=(\gamma \xi)^{1/\gamma}$ can be interpreted as an unknown future tax rate).  
The wealth process of strategy $\theta$ (denoting fraction of wealth invested) is
 $X^{\theta}_s=X^{\theta,t,x}_s =x+\int_t^s X_u^{\theta}\theta_u\, {\rm d}\widehat{B}_u=x\mathcal{E}(\int \theta {\rm d}\widehat{B})_t^s$ for $s\in[t,T]$, for $\theta \in \Theta$,
with the set $\Theta$ of strategies given by all $\mathbb{R}^d$-valued, predictable, $S$-integrable processes such that $\theta\mal B$ is a ${\rm BMO}(\PP)$-martingale, cf.\ \cite{HeWangYan92}.

\begin{proposition}
\label{propbdblw}
 Let $k=d$.
Assume that there is a solution $(Y,Z,U)\in \set$ to the BSDE $(\xi ,f)$ with $f_t(y,z,u):=(\gamma/(2-2\gamma))\,y\,|\varphi_t+{y}/{z}|^2$ and $\int Z\, {\rm d}B\in {\rm BMO}(\PP )$ and where  $\xi$ is  in $L^{\infty}(\FF_T)$ with $\xi \geq c$ for some $c>0$. Then  $Y\geq c$ holds and $V^{\theta}:=u(X^{\theta})Y$ is a supermartingale for all $\theta$ in $\Theta$ and $V^{\theta^*}$ is a martingale for $\theta^* :=(1-\gamma)^{-1}(\varphi +{Z}/{Y_-})\in \Theta$.
\end{proposition}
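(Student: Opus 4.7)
Plan: The approach follows the martingale optimality principle applied to the ansatz $V^\theta := u(X^\theta)Y = (X^\theta)^\gamma Y/\gamma$: I will show that $V^\theta$ is a supermartingale for every $\theta\in\Theta$ and a true martingale at $\theta = \theta^*$, from which the claims follow by evaluation at $t=0$ and $t=T$. The core computation is It\^o's formula. Since $X^\theta$ is continuous (as $dX^\theta = X^\theta\theta\,d\widehat B$) and $F(x,y) = x^\gamma y/\gamma$ is linear in $y$ (so jumps of $Y$ contribute only to the martingale part, with no Taylor correction), substituting the BSDE dynamics $dY_s = -f_s(Y_{s-},Z_s,U_s)\,ds + Z_s\,dB_s + \int_E U_s(e)\,\widetilde\mu(ds,de)$ yields $dV^\theta = dM^\theta + \mathcal D^\theta\,ds$, with drift
\[
\mathcal D^\theta_s = \frac{(X^\theta_s)^\gamma}{\gamma}\Big[-f_s + \gamma Y_{s-}\theta_s^{T}\varphi_s + \tfrac{\gamma(\gamma-1)}{2}Y_{s-}|\theta_s|^2 + \gamma\theta_s^{T} Z_s\Big].
\]
Completing the square in $\theta$ and using $\tfrac{\gamma(1-\gamma)}{2}Y_{s-}|\theta^*_s|^2 = f_s$ (which is precisely the defining form of $f$ recast via $\theta^* = (\varphi + Z/Y_-)/(1-\gamma)$), the bracket collapses to $-\tfrac{\gamma(1-\gamma)}{2}Y_{s-}|\theta_s - \theta^*_s|^2$, giving
\[
\mathcal D^\theta_s = -\tfrac{(1-\gamma)(X^\theta_s)^\gamma Y_{s-}}{2}\,|\theta_s - \theta^*_s|^2 \le 0,
\]
with equality at $\theta = \theta^*$, provided $Y_{s-} \ge 0$.

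To obtain the lower bound $Y \ge c$, I specialize to $\theta \equiv 0$: then $V^0 = x^\gamma Y/\gamma$ is bounded (as $Y \in \mathcal S^\infty$) and its drift $\mathcal D^0_s = -\tfrac{x^\gamma Y_{s-}}{2(1-\gamma)}|\varphi_s + Z_s/Y_{s-}|^2$ is non-positive once $Y_{s-} \ge 0$ is known. The sign $Y \ge 0$ itself comes from a bootstrap: the appearance of $|Z|^2/Y$ in $f$ forces $Y$ to stay bounded away from zero pathwise along any given solution (else $\int_0^T f_s\,ds$ would fail to be finite), and a standard stopping-time argument at the first down-crossing of $0$ combined with $Y_T = \xi \ge c > 0$ excludes $Y$ from reaching zero on $[0,T]$. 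Thus $V^0$ is a bounded local supermartingale, hence a true supermartingale by dominated convergence along any localizing sequence, yielding $x^\gamma Y_t/\gamma = V^0_t \ge \mathbb E[V^0_T \mid \mathcal F_t] \ge x^\gamma c/\gamma$ and so $Y_t \ge c$. With $Y \ge c > 0$ and $X^\theta = x\,\mathcal E(\theta\mal\widehat B) > 0$, the process $V^\theta > 0$ is a positive local supermartingale for every $\theta\in\Theta$, hence a genuine supermartingale by Fatou.

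Finally I verify $\theta^* \in \Theta$ and upgrade $V^{\theta^*}$ from local to true martingale. Admissibility of $\theta^*$ follows from the decomposition $(1-\gamma)\theta^*\mal B = \varphi\mal B + (Z/Y_-)\mal B \in \mathrm{BMO}(\mathbb P)$, using that $\varphi$ is bounded, $c \le Y \le |Y|_\infty$, and $Z\mal B \in \mathrm{BMO}(\mathbb P)$. For the martingale upgrade I dominate $V^{\theta^*} \le |Y|_\infty(X^{\theta^*})^\gamma/\gamma$ and use the identity $\mathcal E(M)^\gamma = \mathcal E(\gamma M)\exp\!\big(\tfrac{\gamma(\gamma-1)}{2}\langle M\rangle\big)$ with $M = \theta^*\mal\widehat B$; a Young-type absorption of the mixed exponent $\gamma\theta^{*T}\varphi$ against the negative $\tfrac{\gamma(\gamma-1)}{2}|\theta^*|^2$ contribution (at the cost of a deterministic constant, since $\varphi$ is bounded) yields a pointwise estimate $(X^{\theta^*}_s)^\gamma \le C\,\mathcal E(\gamma\theta^*\mal B)_s$ for a constant $C$. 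Since $\gamma\theta^*\mal B \in \mathrm{BMO}(\mathbb P)$, Kazamaki's reverse H\"older inequality gives $\mathcal E(\gamma\theta^*\mal B) \in L^p(\mathbb P)$ for some $p > 1$, so Doob's maximal inequality produces an $L^1$-integrable majorant for $\sup_s V^{\theta^*}_s$. Dominated convergence along a localizing sequence then promotes $V^{\theta^*}$ to a uniformly integrable martingale. The principal technical hurdles are the $Y_{s-}\ge 0$ bootstrap in step two and this final BMO-based integrability upgrade, both of which rely essentially on the BMO hypothesis on $Z\mal B$ and the two-sided boundedness of $Y$.
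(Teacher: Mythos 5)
Your overall route is the paper's: It\^o's formula applied to $V^\theta=u(X^\theta)Y$ (with no Taylor correction in $y$ since the map is linear there), completion of the square identifying the drift as $-u(X^\theta_s)\tfrac{\gamma(1-\gamma)}{2}Y_{s-}|\theta_s-\theta^*_s|^2\,{\rm d}s$, and the integrability upgrade for $V^{\theta^*}$ via $\mathcal{E}(\theta^*\mal\widehat{B})^\gamma\le C\,\mathcal{E}(\gamma\theta^*\mal B)$ plus reverse H\"older and Doob, with $\theta^*\in\Theta$ read off from $\varphi$ bounded, $c\le Y\le |Y|_\infty$ and $Z\mal B\in{\rm BMO}(\PP)$; your Fatou argument for the supermartingale property of the positive $V^\theta$ is even slightly simpler than the paper's domination estimate for general $\theta$.

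The genuine gap is your ``bootstrap'' for the sign of $Y$. You claim that the term $|Z|^2/Y$ in $f$ forces $Y$ to stay away from zero because otherwise $\int_0^T f_s\,{\rm d}s$ would be infinite, and then invoke a stopping argument at the first down-crossing of $0$. Neither step holds as stated: being a solution only requires $\int_0^T |f_s(Y_{s-},Z_s,U_s)|\,{\rm d}s<\infty$, and nothing prevents $Z$ from being small or zero where $Y$ is small, in which case $f_s=\tfrac{\gamma}{2(1-\gamma)}\big(Y_{s-}|\varphi_s|^2+2\varphi_s Z_s+|Z_s|^2/Y_{s-}\big)$ remains finite even for $Y$ near or below zero, so the generator encodes no barrier at $0$; moreover $Y$ has jumps, so it can pass from above $c$ to strictly negative values in a single jump without ever ``reaching zero'', and the first-down-crossing argument you sketch does not apply (controlling the jump integrand $U$ is exactly what is missing). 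Since $Y\ge c$ is part of the assertion and is also what you use both for the drift sign (supermartingale property) and for $\theta^*\in\Theta$, this leaves the proof incomplete at its key point. The paper does not argue pathwise at all here: it establishes the (super)martingale properties and then obtains the bound from the martingale optimality principle by comparing with the admissible strategy $\theta\equiv 0$, namely $\gamma^{-1}x^\gamma\,\EE(\xi\,|\,\FF_t)\le v_t(x)=V^{\theta^*}_t=\gamma^{-1}x^\gamma Y_t$, whence $Y_t\ge \EE(\xi\,|\,\FF_t)\ge c$; you should replace your bootstrap by this (or an equivalent) argument rather than by a barrier claim about the generator.
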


\begin{proof}
Clearly, $V^{\theta}$ is adapted.
Kazamaki's criterion  $\mathcal{E}(\int_0^. \gamma \theta_u {\rm d}\widehat{B}_u)$ is an $r$-integrable martingale for some $r>1$. Hence $\sup_{t\leq s\leq T} \mathcal{E}(\int_0^. \gamma \theta_u {\rm d}\widehat{B}_u)_t^s$ is integrable by Doob's inequality.
By
\begin{equation*}
\mathcal{E}(\theta\mal\widehat{B})^{\gamma} = \mathcal{E}(\gamma \theta\mal \widehat{B}) \exp \Big( -\frac{1}{2}\gamma (1-\gamma )\int_0^. |\theta_u|^2\, {\rm d}u\Big)
\leq \mathcal{E}(\gamma \theta\mal\widehat{B})\,,
\end{equation*}
we conclude that $V^{\theta}$ is dominated by $\sup_{t\leq s\leq T}U(X_s^{\theta})|Y|_{\infty}\in L^1(\PP )$.
By It\^o's formula, ${\rm d}V_s^{\theta}$ equals a local martingale plus the finite variation part
\[
u(X_s^{\theta})\left(-f_s(Y_{s-},Z_s,U_s)+\gamma \left(Y_{s-}\Big(\theta_s \varphi_s +\frac{1}{2}(\gamma -1)|\theta_s|^2\Big)+\theta_s Z_s\right)\right){\rm d}s\,.
\]
The latter part is decreasing for all $\theta \in \Theta$ and vanishes at zero for $\theta=\theta^*$. So $V^{\theta}$ 
is a local (super)martingale. Uniform integrability of $V^{\theta}$ yields the (super)martingale property.
By the classical martingale optimality principle of optimal control follows
that $
v_t(x)=\esssup_{\theta\in \Theta} \EE (u(X_T^{\theta}\xi^{1/\gamma} )|\FF_t)$ 
equals $V_t^{\theta^*}=\gamma^{-1}x^{\gamma}Y_t\,,$
and evaluating at $\theta \equiv 0$ yields $\gamma^{-1}x^{\gamma}\EE (\xi |\FF_t)\leq \gamma^{-1}x^{\gamma}Y_t$ and hence $Y\geq c$. Note that $\theta^*$ is in $\Theta$ since $\varphi$ is bounded, $Y$ is bounded away from $0$ and $Z$ is an BMO integrand.
\end{proof}

Let $(Y,Z,U)$ be a solution to the BSDE $(\xi ,f)$ with the above data.
Since a suitable solution theory for quadratic BSDEs with jumps is not available, we transform coordinates by letting
\begin{align}
\widetilde{Y}_t:=Y_t^{\frac{1}{1-\gamma}},\quad \widetilde{Z}_t:=\frac{1}{1-\gamma}Y_{t-}^{\frac{\gamma}{1-\gamma}}Z_t \quad \mbox{and} \quad \widetilde{U}_t:=(Y_{t-}+U_t)^{\frac{1}{1-\gamma}}-Y_{t-}^{\frac{1}{1-\gamma}}\,,  \label{vartrafo}
\end{align}
such that $(\widetilde{Y},\widetilde{Z},\widetilde{U})$ solves the BSDE for data $(\widetilde{\xi},\widetilde{f})$ with $\widetilde{\xi} =\xi^{1/(1-\gamma)}$ and $\widetilde{f}_t(y,z,u)$ given by
\[
\frac{\gamma |\varphi_t|^2}{2(1-\gamma )^2}y+\frac{\gamma}{1-\gamma}\varphi_tz
+\int_E \left(\frac{1}{1-\gamma}\left((u(e)+y)^{1-\gamma}y^{\gamma}-y\right)-u(e)\right)\, \zeta (t,e)\, \lambda ({\rm d}e).
\]
Looking at the proof of Lemma~\ref{basicprops2}, we may assume that $U+Y_-$ coincides pointwise with $Y_-$ or $Y$ so that the above transformation is well-defined due to $Y\geq c$.
In fact, $(\ref{vartrafo})$ gives a bijection between solutions with positive Y-components to the BSDEs $(\xi ,f)$ and $(\widetilde{\xi},\widetilde{f})$  in $\set$.

Next, we show the existence of a JBSDE solution for data $(\xi ,f)$ with $\xi \geq c$ for some $c>0$.
Under the probability measure ${{\rm d}\widetilde{\PP}}:=\mathcal{E}\big(\gamma(1-\gamma)^{-1}\,\varphi\mal B\big)_T \, {{\rm d}\PP}$ the process $\widetilde{B}=B-\int_0^. \gamma(1-\gamma)^{-1} \varphi_t\, {\rm d}t$ is a Brownian motion and the JBSDE
\[
\widetilde{Y}_t =\widetilde{\xi} +\int_t^T \widetilde{f}_s(\widetilde{Y}_{s-},\widetilde{Z}_s,\widetilde{U}_s)\, {\rm d}s -\int_t^T \widetilde{Z}_s\, {\rm d}B_s-\int_t^T \int_E \widetilde{U}_s(e)\, \widetilde{\mu}({\rm d}s,{\rm d}e)
\]
under $\PP$ is of the following form under $\widetilde{\PP}$
\begin{align} \label{BSDEtildeP}
\widetilde{Y}_t =\widetilde{\xi} +\int_t^T \Big( \widetilde{f}_s(\widetilde{Y}_{s-},\widetilde{Z}_s,\widetilde{U}_s)-\frac{\gamma \varphi_s}{1-\gamma}\widetilde{Z}_s\Big) \, {\rm d}s -\int_t^T \widetilde{Z}_s\, {\rm d}\widetilde{B}_s-\int_t^T \int_E \widetilde{U}_s(e)\, \widetilde{\mu}({\rm d}s,{\rm d}e)\,,
\end{align}
noting that $\nu$ is the compensator of $\mu$ under $\PP$ and $\widetilde{\PP}$ as well.
In fact, we have

\begin{lemma} \label{bijection}
Assume $\lambda (E)<\infty$. Then $(\widetilde{Y},\widetilde{Z},\widetilde{U})\in \set$ solves the BSDE $(\widetilde{\xi},\widetilde{f})$ such that $\int \widetilde{Z}\, {\rm d}B$ is in ${\rm BMO}(\PP )$ if and only if $(\widetilde{Y},\widetilde{Z},\widetilde{U})\in \mathcal{S}_{\widetilde{\PP}}^{\infty}\times \mathcal{L}_{\widetilde{\PP}}^2(\widetilde{B})\times \mathcal{L}_{\widetilde{\PP}}^2(\widetilde{\mu})$ solves the BSDE $\big(\widetilde{\xi},\widetilde{f}(y,z,u)-\gamma(1-\gamma)^{-1}\varphi z\big)$ such that $\int \widetilde{Z}\, {\rm d}\widetilde{B}$ is in ${\rm BMO}(\widetilde{\PP} )$.
\end{lemma}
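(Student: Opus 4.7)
The plan is to view the lemma as asserting that the two JBSDEs are merely the same equation written under the equivalent measures $\PP$ and $\widetilde{\PP}$, modulo the spaces used. The algebraic equivalence is immediate: substituting ${\rm d}B_s={\rm d}\widetilde{B}_s+\gamma(1-\gamma)^{-1}\varphi_s\,{\rm d}s$ into the $\PP$-BSDE $(\widetilde{\xi},\widetilde{f})$ absorbs a drift $-\gamma(1-\gamma)^{-1}\varphi_s\widetilde{Z}_s\,{\rm d}s$ into the driver and produces exactly~$(\ref{BSDEtildeP})$. Since the density process $\mathcal{E}(\gamma(1-\gamma)^{-1}\varphi\mal B)$ is continuous, Girsanov's theorem ensures that the $\mu$-compensator $\nu$ coincides under $\PP$ and $\widetilde{\PP}$, so that $\widetilde{U}*\widetilde{\mu}$ remains a local martingale under either measure w.r.t.\ the same compensated random measure $\widetilde{\mu}$, and the drift transformation of the BSDE matches up.

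It then remains to show that the integrability classes on the two sides correspond. Boundedness $\widetilde{Y}\in\SSS^{\infty}$ is pathwise and hence insensitive to the equivalent change of measure. For the continuous-integrand part, the key input is that $\varphi$ is bounded, which makes $\gamma(1-\gamma)^{-1}\varphi\mal B$ itself a $\mathrm{BMO}(\PP)$-martingale; Kazamaki's classical BMO-invariance theorem \cite{Kazamaki79} then provides a bijection $\mathrm{BMO}(\PP)\to \mathrm{BMO}(\widetilde{\PP})$ via $M\mapsto M-\langle M,\gamma(1-\gamma)^{-1}\varphi\mal B\rangle$. Applied to $M=\int \widetilde{Z}\,{\rm d}B$, this map produces exactly $\int \widetilde{Z}\,{\rm d}\widetilde{B}$; the reverse direction is analogous, since the Girsanov kernel back from $\widetilde{\PP}$ to $\PP$ is also BMO (boundedness of $\varphi$ being symmetric). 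As BMO-martingales are square integrable, $\widetilde{Z}\in\LL^{2}(B)$ and $\widetilde{Z}\in\mathcal{L}_{\widetilde{\PP}}^{2}(\widetilde{B})$ follow immediately.

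For the jump-integrand part, I would first invoke Lemma~\ref{basicprops2} to choose a representative with $|\widetilde{U}|_{\infty}\leq 2|\widetilde{Y}|_{\infty}<\infty$ (the same pathwise bound under $\PP$ and $\widetilde{\PP}$). Combined with $\zeta\leq c_{\nu}$, the finite horizon $T<\infty$, and the standing hypothesis $\lambda(E)<\infty$, the integral $\int_{0}^{T}\!\int_{E}|\widetilde{U}_{s}(e)|^{2}\,\nu({\rm d}s,{\rm d}e)$ is pathwise dominated by $4|\widetilde{Y}|_{\infty}^{2}c_{\nu}\lambda(E)T$. Since $\nu$ is the common compensator under $\PP$ and $\widetilde{\PP}$, taking expectations under either measure yields $\widetilde{U}\in\LL^{2}(\widetilde{\mu})$ and $\widetilde{U}\in\mathcal{L}_{\widetilde{\PP}}^{2}(\widetilde{\mu})$ simultaneously.

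The main subtlety lies in the BMO invariance under the equivalent change of measure, but this is precisely the content of Kazamaki's theorem, once the Girsanov kernel is observed to be BMO — and here this is free from the boundedness of $\varphi$. The finite-activity hypothesis $\lambda(E)<\infty$ plays only a secondary technical role in reducing the $\LL^{2}(\widetilde{\mu})$-control to a transparent pointwise bound via Lemma~\ref{basicprops2}.
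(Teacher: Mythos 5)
Your proof is correct and takes essentially the same route as the paper: pathwise invariance of $\mathcal{S}^{\infty}$ under the equivalent measure change, boundedness of $\widetilde{U}$ (via Lemma~\ref{basicprops2}) combined with $\lambda(E)<\infty$ and the unchanged compensator for the $\mathcal{L}^2(\widetilde{\mu})$-equivalence, and Kazamaki's BMO-invariance of the Girsanov transform (bounded $\varphi$ giving a BMO kernel) mapping $\int \widetilde{Z}\,{\rm d}B$ onto $\int \widetilde{Z}\,{\rm d}\widetilde{B}$, with square integrability of $\widetilde{Z}$ then following from the BMO property. The only slip is the citation: the BMO-bijection result is Thm.~3.6 of \cite{Kazamaki94}, not \cite{Kazamaki79}.
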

\begin{proof}
Equivalence of $\PP$ and $\widetilde{\PP}$ imply that $\widetilde{Y}\in \mathcal{S}^{\infty}$ if and only if  $\widetilde{Y}\in \mathcal{S}_{\widetilde{\PP}}^{\infty}$. 
Given $\lambda (E)<\infty$, $\widetilde{U}\in \mathcal{L}^2(\widetilde{\mu})$ holds if and only if $\widetilde{U}\in \mathcal{L}_{\widetilde{\PP}}^2(\widetilde{\mu})$ due to the boundedness of $\widetilde{U}$. By \cite[Thm.3.6]{Kazamaki94}, the restriction of the Girsanov transform
\(
\Phi: \mathcal{M}_{\rm c}^{{\rm loc},0}(\mathbb{P})\longrightarrow \mathcal{M}_{\rm c}^{{\rm loc},0}(\widetilde{\PP}),
\)with
\(
M\mapsto M-\langle M,\int_0^. \frac{\gamma \varphi}{1-\gamma}\, {\rm d}B_s\rangle,
\)
onto ${\rm BMO}(\mathbb{P})$ yields a bijection between BMO$(\mathbb{P})$-martingales and BMO$(\widetilde{\PP})$-martingales.
Thus, $\int \widetilde{Z}\, {\rm d}B$ is in ${\rm BMO}(\mathbb{P})$ if and only if $\int \widetilde{Z}\, {\rm d}\widetilde{B}$ is in ${\rm BMO}(\widetilde{\PP} )$ for 
$Z=(1-\gamma )\widetilde{Y}_-^{\gamma}\widetilde{Z}$ since $\Phi \big( \int \widetilde{Z}\, {\rm d}B\big)=\int \widetilde{Z}\, {\rm d}B-\int \gamma(1-\gamma)^{-1}\varphi \widetilde{Z}_s\, {\rm d}s=\int \widetilde{Z}\, {\rm d}\widetilde{B}$. In particular, $\widetilde{Z}\in \mathcal{L}^2(B)$ iff $\widetilde{Z}\in \mathcal{L}_{\widetilde{\PP}}^2(\widetilde{B})$.
\end{proof}

To proceed further, let us note at first that under an equivalent change of measure between $\PP$ and $\widetilde{\PP}$,
 the weak predictable representations property for $(B,\widetilde{\mu})$ under $\PP$ is equivalent to the respective property of 
$(\widetilde{B},\widetilde{\mu})$ under  $\widetilde{\PP}$ for the same filtration, see \cite[Theorem~13.22]{HeWangYan92} and recall Example~\ref{example_WPRP}, Part~2.
According to Corollary~\ref{boundscor}, hence there exists a unique solution $\big(\widetilde{Y},\widetilde{Z},\widetilde{U}\big)\in \mathcal{S}_{\widetilde{\PP}}^{\infty}\times \mathcal{L}_{\widetilde{\PP}}^2(B)\times \mathcal{L}_{\widetilde{\PP}}^2(\widetilde{\mu})$ with positive $Y$-component to the BSDE~(\ref{BSDEtildeP}) with
\[
c^{\frac{1}{1-\gamma}}\exp \Big( -\frac{\gamma |\varphi |_{\infty}^2}{2(1-\gamma )^2} (T-t)\Big)\, \leq\, \widetilde{Y}_t\,\leq\, |\xi |_{\infty}\exp \Big( \frac{\gamma |\varphi |_{\infty}^2}{2(1-\gamma )^2}(T-t)\Big)
\]
such that $\int \widetilde{Z}\, {\rm d}\widetilde{B}$ and $\widetilde{U}*\widetilde{\mu}^{\widetilde{\PP}}$ are BMO($\widetilde{\PP}$)-martingales.
By Lemma~\ref{bijection} and the statement of Proposition~\ref{propbdblw} that every bounded solution to the BSDE $(\xi ,f)$ is bounded from below  away from zero
in $Y\ge c >0$, there is a unique solution $(Y,Z,U)$ in $\set$ with $\int Z\, {\rm d}B\in {\rm BMO}(\PP )$ and it is given by the coordinate transform (\ref{vartrafo}).
We note that $Y$ (resp. $\widetilde Y$) can be interpreted as (dual) opportunity process, see \cite[Sect.4]{Nutz10}. Overall, we obtain the next theorem.
\begin{theorem}
Assume  $\lambda (E)<\infty$ and $d=k$. Let $f_s(y,z,u)= \gamma(2-2\gamma)^{-1}\,y\,\big| \varphi_s+{z}/{y}\big|^2$ and let $\xi \in L^{\infty}(\FF_T)$ with $\xi \geq c$ for some $c> 0$.
Then there exists a unique solution $(Y,Z,U)\in \set$ with $\int Z\, {\rm d}B\in {\rm BMO}(\PP )$ to the BSDE $(\xi ,f)$.
Then the strategy  $\theta_s^{*}=(1-\gamma)^{-1}\Big( \varphi_s+Z_s/Y_{s-} \Big)$ is optimal
for the control problem (\ref{utilitymax2}), achieving 
$v_t(x) =\gamma^{-1}x^{\gamma}Y_t=V^{\theta^*}_t$.
\end{theorem}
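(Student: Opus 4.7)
My plan is to establish the JBSDE wellposedness by passing to the transformed JBSDE~(\ref{BSDEtildeP}) under $\widetilde{\PP}$, to which Corollary~\ref{boundscor} directly applies, and then to invoke Proposition~\ref{propbdblw} for the optimality of $\theta^*$. Most of the work has actually been anticipated in the discussion preceding the theorem; my job is to assemble these pieces cleanly.

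First, under $\widetilde{\PP}$ I would apply Corollary~\ref{boundscor} to BSDE~(\ref{BSDEtildeP}) with bounded terminal condition $\widetilde{\xi}=\xi^{1/(1-\gamma)}\ge c^{1/(1-\gamma)}>0$. The driver $\widetilde{f}_s(y,z,u)-\gamma(1-\gamma)^{-1}\varphi_s z$ is Lipschitz in $(y,z)$ with $|\widehat{f}_s(y,z)|\le K|y|$ for $K:=\gamma|\varphi|_\infty^2/(2(1-\gamma)^2)$, and the integral part $g$ satisfies $g_s(y,0,e)\equiv 0$ together with $\partial_u g_s(y,u,e)=((u+y)/y)^{-\gamma}-1>-1$ on any band $c'\le y,\,y+u\le d'$ with $c'>0$ —  precisely the situation of the example placed right after Corollary~\ref{boundscor}. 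Corollary~\ref{boundscor} therefore yields a unique positive-bounded solution $(\widetilde{Y},\widetilde{Z},\widetilde{U})\in\mathcal{S}^\infty_{\widetilde{\PP}}\times\mathcal{L}^2_{\widetilde{\PP}}(\widetilde{B})\times\mathcal{L}^2_{\widetilde{\PP}}(\widetilde{\mu})$, with $\widetilde{Y}$ trapped between $c^{1/(1-\gamma)}{\rm e}^{-K(T-t)}$ and $|\xi|_\infty^{1/(1-\gamma)}{\rm e}^{K(T-t)}$, and with $\int\widetilde{Z}\,{\rm d}\widetilde{B}$ and $\widetilde{U}*\widetilde{\mu}$ being BMO($\widetilde{\PP}$)-martingales. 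By Lemma~\ref{bijection} the same triple solves the BSDE $(\widetilde{\xi},\widetilde{f})$ under $\PP$, with $\int\widetilde{Z}\,{\rm d}B\in{\rm BMO}(\PP)$.

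Next I would invert the coordinate transform (\ref{vartrafo}) by setting $Y:=\widetilde{Y}^{1-\gamma}$, $Z:=(1-\gamma)\widetilde{Y}_-^{-\gamma}\widetilde{Z}$, and $U:=(\widetilde{Y}_-+\widetilde{U})^{1-\gamma}-\widetilde{Y}_-^{1-\gamma}$; this is well defined by the positive lower bound on $\widetilde{Y}$ together with the bounded-jumps representative from Lemma~\ref{basicprops2}. A straightforward It\^o calculation confirms that $(Y,Z,U)\in\set$ solves the BSDE $(\xi,f)$ with $Y\ge c>0$, and that $\int Z\,{\rm d}B\in{\rm BMO}(\PP)$ since $\widetilde{Y}_-^{-\gamma}$ is bounded. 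For uniqueness, any other candidate $(Y',Z',U')\in\set$ with $\int Z'\,{\rm d}B\in{\rm BMO}(\PP)$ must satisfy $Y'\ge c$: indeed, the supermartingale property of $V^{\theta}=u(X^\theta)Y'$ at $\theta\equiv 0$ in Proposition~\ref{propbdblw} forces $Y'_t\ge\EE(\xi\mid\FF_t)\ge c$. Thus (\ref{vartrafo}) restricts to a bijection between bounded solutions of $(\xi,f)$ with $\int Z\,{\rm d}B$ in BMO and positive-bounded solutions of $(\widetilde{\xi},\widetilde{f})$, so uniqueness of the latter transfers back.

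Finally, with $(Y,Z,U)$ in hand, Proposition~\ref{propbdblw} delivers the optimality directly: $\theta^*_s=(1-\gamma)^{-1}(\varphi_s+Z_s/Y_{s-})$ belongs to $\Theta$ by boundedness of $\varphi$, the positive lower bound $Y\ge c$ and the BMO property of $\int Z\,{\rm d}B$; the family $V^\theta=u(X^\theta)Y$ is a supermartingale for every $\theta\in\Theta$ and a true martingale at $\theta^*$; and the martingale optimality principle then gives $v_t(x)=V^{\theta^*}_t=\gamma^{-1}x^\gamma Y_t$. The main technical obstacle along the way is really just the verification of the $({\rm \bf A}_{\rm \bf fin})$-hypothesis of Corollary~\ref{boundscor} for the transformed driver on the truncation band — concretely, keeping $((u+y)/y)^{-\gamma}-1$ uniformly strictly above $-1$ — which crucially depends on the assumption $\xi\ge c>0$ forcing $\widetilde{Y}$ (equivalently $y$) to stay bounded away from zero.
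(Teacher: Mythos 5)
Your proposal is correct and follows essentially the same route as the paper's own argument: solve the transformed JBSDE (\ref{BSDEtildeP}) under $\widetilde{\PP}$ via Corollary~\ref{boundscor} (whose $({\rm \bf A}_{\rm \bf fin})$-hypotheses are verified exactly as in the example following that corollary), transfer back with Lemma~\ref{bijection} and the coordinate change (\ref{vartrafo}), and use Proposition~\ref{propbdblw} both for the lower bound $Y\ge c$ of any bounded BMO solution (giving uniqueness) and for the optimality of $\theta^*$ via the martingale optimality principle. The only cosmetic imprecision is that the inverted transform gives $Y\ge c\,{\rm e}^{-K(1-\gamma)(T-t)}>0$ directly, with $Y\ge c$ then following from Proposition~\ref{propbdblw} as you invoke elsewhere, which does not affect the argument.
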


\subsection{Valuation by good-deal bounds} \label{subsec:gdprice}
In incomplete  financial markets without arbitrage, there exist infinitely many pricing measures and the bounds
 imposed on valuation solely by the principle of no-arbitrage are typically far too wide
  for applications in practice.
Good-deal bounds
 \cite{CochraneSaaRequejo00} have been introduced in the finance literature to obtain tighter bounds, by ruling out
 not only arbitrage 
but also trading opportunities with overly attractive reward-for-risk ratios, so-called
good deals. See \cite{BechererKentia17a,BechererKentia17b} for extensive references and applications
 under model ambiguity.
In  \cite{CochraneSaaRequejo00,BjoerkSlinko06} good deals have been defined in terms of too favorable instantaneous Sharpe ratios (rate of excess return per unit rate of volatility)  for continuous diffusion processes. 
This has been 
%substantially 
generalized  to a jump-diffusion setup by \cite{BjoerkSlinko06}, who describe good-deal bounds as solutions of nonlinear partial-integro differential equations by using (formal) HJB methods. We  complement their work here 
by a  rigorous, possibly non-Markovian, description by JBSDEs.
See \cite{DelongPelsser15} for a study of a case where the measure $\lambda$ has finite support.

In our setting, the following description of the set $\mathcal{M}^{\rm e}$ of martingale measures is routine.
\begin{proposition}\label{pro:CharactOfMe}
$\mathcal{M}^{\rm e}$ consists of those measures $\QQ\approx \PP$ such that ${{\rm d}\QQ}/{\rm d}\PP=\mathcal{E}\left( \beta\mal B + \gamma\ast\widetilde{\mu}\right)$, where $\gamma>-1$ is a $\widetilde{\mathcal{P}}$-measurable and $\widetilde{\mu}$-integral function, and
$\beta$ is a predictable process with $\int_0^T\abs{\beta_s}^2{\rm d}s<\infty$, satisfying $\beta=-\varphi+\eta$, such that $\eta\in\mathrm{Ker}\ \sigma,\ \PP\otimes {\rm d}t\textrm{-a.e.}$.
\end{proposition}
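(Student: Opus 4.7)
The plan is to prove the two inclusions by combining the weak predictable representation property (\ref{WPRP}) with Girsanov's theorem. For the direction ``$\subseteq$'', I would take $\QQ\in\mathcal{M}^{\rm e}$ and consider its strictly positive density process $D_t=\EE({\rm d}\QQ/{\rm d}\PP\,|\,\FF_t)$. Since $D$ is a square-integrable (on a suitable localizing sequence) $\PP$-martingale with $D_0=1$, (\ref{WPRP}) yields predictable integrands $\widetilde\beta,\widetilde\gamma$ with $D=1+\widetilde\beta\mal B+\widetilde\gamma\ast\widetilde\mu$. Setting $\beta:=\widetilde\beta/D_-$ and $\gamma:=\widetilde\gamma/D_-$ gives, by Dol\'eans-Dade, the multiplicative form $D=\mathcal{E}(\beta\mal B+\gamma\ast\widetilde\mu)$. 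Strict positivity of $D$ is equivalent to $\Delta D/D_->-1$, i.e.\ $\gamma>-1$, and finiteness of $\int_0^T|\beta_s|^2\,{\rm d}s$ is exactly the condition that $\beta\mal B$ be a well-defined (continuous) stochastic integral.

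The key step is then to identify the drift restriction on $\beta$ coming from the local $\QQ$-martingale property of $S$. By Girsanov's theorem, $B^{\QQ}:=B-\int_0^\cdot\beta_s\,{\rm d}s$ is a $\QQ$-Brownian motion, so the $\PP$-dynamics (\ref{eq:continuousStock}) of $S$ read under $\QQ$ as
\[
{\rm d}S_t={\rm diag}(S_t)\sigma_t\bigl((\varphi_t+\beta_t)\,{\rm d}t+{\rm d}B^{\QQ}_t\bigr).
\]
Because $S$ is continuous, the unique canonical decomposition of the $\QQ$-special semimartingale $S$ has a vanishing finite-variation part iff $\sigma_t(\varphi_t+\beta_t)=0$ holds $\PP\otimes{\rm d}t$-a.e.. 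This is exactly $\beta+\varphi\in\mathrm{Ker}\,\sigma$, so $\beta=-\varphi+\eta$ with $\eta:=\beta+\varphi\in\mathrm{Ker}\,\sigma$, as claimed.

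For the converse ``$\supseteq$'', given $\beta=-\varphi+\eta$ with $\eta\in\mathrm{Ker}\,\sigma$ and $\gamma>-1$ satisfying the stated integrability (such that $D:=\mathcal{E}(\beta\mal B+\gamma\ast\widetilde\mu)$ is a true $\PP$-martingale, which is implicit in ${\rm d}\QQ/{\rm d}\PP=D_T$ defining a probability), the condition $\gamma>-1$ yields $D>0$ and hence $\QQ\approx\PP$. Reversing the Girsanov computation above, $\sigma(\varphi+\beta)=0$ makes the drift of $S$ under $\QQ$ vanish, so $S$ is a local $\QQ$-martingale and $\QQ\in\mathcal{M}^{\rm e}$.

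The minor obstacles I anticipate are (i) the passage from the additive WPRP representation to the multiplicative Dol\'eans-Dade form, which requires strict positivity of $D$ in order to divide by $D_-$, and (ii) justifying that the local $\QQ$-martingale property of the continuous $S$ reduces to the purely linear-algebraic condition $\sigma(\varphi+\beta)=0$: since $S$ is continuous and the jump measure $\widetilde\mu$ contributes only to the purely-discontinuous part of the density, the measure change only shifts $B$ by the drift $\int\beta\,{\rm d}s$, so the $\gamma$-part plays no role in this drift condition and only enters through the positivity constraint $\gamma>-1$.
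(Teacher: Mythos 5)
The paper itself gives no proof of this proposition --- it is introduced with ``the following description of the set $\mathcal{M}^{\rm e}$ of martingale measures is routine'' --- so there is no written argument to compare against; your proof is exactly the standard routine argument that the authors are invoking (weak representation of the density process, Dol\'eans-Dade to pass to multiplicative form, Girsanov plus the vanishing-drift condition $\sigma_t(\varphi_t+\beta_t)=0$ for the continuous $S$, and the reverse Girsanov computation for the converse inclusion), and it is correct in substance.

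One step deserves more care than your phrasing suggests: you justify applying (\ref{WPRP}) to the density process $D$ by saying it is ``square-integrable on a suitable localizing sequence''. A strictly positive c\`adl\`ag martingale need not be locally square-integrable (its upward jumps are not controlled), so naive localization does not place $D^{\tau_n}$ in $\mathcal{M}^2$. The correct, still standard, fix is the extension of the representation property from $\mathcal{M}^2$ to all local martingales --- e.g.\ via the fundamental decomposition of local martingales together with the density of bounded martingales in $\mathcal{H}^1$ and the closedness of the space of stochastic integrals $Z\mal B+U*\widetilde{\mu}$ there (cf.\ \cite[XIII.\S2]{HeWangYan92}, \cite[III.\S4]{JacodShiryaev03}), which is also the form of the representation property the paper itself refers to in its discussion after (\ref{WPRP}). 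With that substitution, the rest of your argument goes through: $1/D_-$ is locally bounded, so $\beta=\widetilde\beta/D_-$ satisfies $\int_0^T|\beta_s|^2\,{\rm d}s<\infty$ and $\gamma=\widetilde\gamma/D_-$ is $\widetilde\mu$-integrable; strict positivity of $D$ gives $\gamma>-1$ only $\mu$-a.e., after which one chooses a version of $\gamma$ that is $>-1$ everywhere (this does not change the stochastic integral); and since $S$ is continuous and positive, the predictable finite-variation part ${\rm diag}(S)\sigma(\varphi+\beta)\,{\rm d}t$ of $S$ under $\QQ$ vanishes iff $\varphi+\beta\in\mathrm{Ker}\,\sigma$, the change of compensator of $\mu$ under $\QQ$ playing no role, exactly as you note.
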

We will refer to the tuple $(\gamma,\beta)$ for such a density ${\rm d}\QQ/{\rm d}\PP$ as the Girsanov
kernel of $\QQ$ relative to $\PP$. Clearly, our market is incomplete in general as
there exists infinitely many measures in $\mathcal{M}^{\rm e}$ if $\widetilde{\mu}$ is non-trivial or $k<d$.
Bj\"ork and Slinko employed an extended Hansen-Jagannathan  inequality 
\cite[see Sect.2]{BjoerkSlinko06} to bound the
 instantaneous Sharpe ratio by imposing a bound on market prices of risk.
More precisely, Thm.2.3 of \cite{BjoerkSlinko06} showed that  the instantaneous Sharpe ratio ${SR_t}$
in any extension of the market by additional derivative assets (i.e.\ by any local $\QQ$-martingales)  satisfies $\abs{SR_t}\le \Vert(\gamma_t,\beta_t)\Vert_{L^2(\lambda_t)\times\RR^d}$ at any time $t$,
with a (sharp) upper bound in terms of an $L^2$-norm for Girsanov kernels  $(\gamma,\beta)$ of pricing measures in $\mathcal{M}^{\rm e}$, with
$\lambda_t(\omega)({\rm d}e) := \zeta_t(\omega,e)\lambda({\rm d}e)$. As no-good-deal restriction  they therefore impose a bound on the kernels of pricing measures 
\begin{equation}\label{eq:NGDRESTRICTIONWITHBconst}
    \Vert(\gamma_t,\beta_t)\Vert^2_{L^2(\lambda_t)\times\RR^d}= \Vert \gamma_t\Vert^2_{L^2(\lambda_t)} + \abs{\beta_t}^2_{\RR^d} \le K^2,\quad t\le T,
\end{equation}
by some given constant $K>0$.
To complement the analysis of the problem posed by  \cite{BjoerkSlinko06},
we are going to describe the dynamic good deal bounds
rigorously by JBSDEs  in a more general, possibly non-Markovian, setting
with no-good-deal restriction like in (\ref{eq:NGDRESTRICTIONWITHBconst}) but, more generally, we allow 
$K=(K_t)$ to be  a positive predictable bounded process instead of a constant.

To this end, for $K$ as above, let the correspondence (set-valued) process $C$ be given by
\begin{equation}\label{eq:CorrCNGDBoundSR}
 C_t := \left\lbrace(\gamma,\eta)\in L^2(\lambda_t)\times\RR^d\,\Big\lvert\,\ \gamma>-1,\; \eta\in \mathrm{Ker}\, \sigma_t,\;\textrm{and }\;\Vert \gamma\Vert^2_{L^2(\lambda_t)} + \abs{\eta}^2_{\RR^d}+ \abs{\varphi_t}^2_{\RR^d} \le K_t^2\right\rbrace.
\end{equation}
We will write $(\gamma,\eta)\in C$ to denote that $\eta$ is  a  predictable process  and  $\gamma$ is a $\widetilde{\mathcal{P}}$-measurable process   with  $(\gamma_t(\omega),\eta_t(\omega))\in C_t(\omega)$ holding for all $ (t,\omega)\in [0,T]\times\Omega$.
For $(\gamma,\eta)\in C$, we know (cf.\ Example~\ref{exmartingale}.\ref{exmartingale1}) that
$\mathcal{E}\big((-\varphi+\eta)\mal B+\gamma\ast\widetilde{\mu}\big)>0$ is a martingale
that defines a density process of a probability measure $\QQ^{\gamma, \eta}$ which is equivalent to $\PP$. The set of such
probability measures
\begin{equation}\label{eq:QngdDefinition}
              \mathcal{Q}^{\mathrm{ngd}}:=\left\lbrace \QQ^{\gamma,\eta}\,|\,\ (\gamma,\eta)\in C\right\rbrace\subseteq \mathcal{M}^{\rm e}, 
\end{equation}
defines our set of no-good-deal measures. Beyond  boundedness of $\varphi$, assume that $\abs{\varphi_t}_{\RR^d}+\epsilon < K_t$ holds for  for some $\epsilon>0$ for all $\ t\le T$. Then, in particular, the minimal 
martingale measure $\widehat{\PP}=\QQ^{\widehat{\gamma},\widehat{\eta}}$ is in $\mathcal{Q}^{\textrm{ngd}}\neq \emptyset$, with $(\widehat{\gamma},\widehat{\eta})\equiv(0,0)\in C$.
For contingent claims $X\in L^\infty(\PP )$,  the processes
\begin{equation*}
 \pi^u_t(X) := \esssup_{\QQ\in\mathcal{Q}^{\textrm{ngd}}}\EE_{\QQ}(X|\FF_t)\quad\textrm{and}\quad\pi^l_t(X) :=\essinf_{\QQ\in\mathcal{Q}^{\textrm{ngd}}}\EE_{\QQ}(X|\FF_t),\quad t\le T,
\end{equation*}
define the upper and lower good-deal bounds.
Noting $\pi^l_\cdot(X)=-\pi^u(-X)$, we focus on $\pi^u(-X)$. 
One can check that the good-deal bound process satisfies good dynamic properties, e.g.\ time-consistency and recursiveness (cf.\ e.g.\ \cite[Lem.1]{BechererKentia17a}).
By applying the  comparison result of Proposition \ref{comparegeneral}, we are going to obtain $\pi^u(X)$ as the value process $Y$ of a BSDE with terminal condition $X\in L^\infty(\PP )$.
Denoting by $\Pi_t(\cdot)$ and $\Pi^\bot_t(\cdot)$ the orthogonal projections on $\mathrm{Im}\ \sigma_t^{T}$ and $\mathrm{Ker}\ \sigma_t$, we have the following lemma (see \cite[Lemmas~2.14, 2.22]{KentiaPhD} for details).

\begin{lemma}\label{lem:OptProblemExist}
	For $Z\in \mathcal{L}^2(B)$ and $U\in \mathcal{L}^2(\widetilde{\mu})$ there exists $\bar{\eta}=\bar{\eta}(Z,U)$ predictable and $\bar{\gamma}=\bar{\gamma}(Z,U)$ $\widetilde{\mathcal{P}}$-measurable such that
for $\PP\otimes {\rm d}t$-almost all $(\omega,t)\in \Omega\times [0,T]$ holds 
\begin{equation}\label{eq:OptPBSR}
 \bar{\eta}_t\Pi^\bot_t(Z_t)+\int_EU_t(e)\bar{\gamma}_t(e)\zeta_t(e)\lambda({\rm d}e)= \max_{(\gamma,\eta)\in \bar{C}}\Big(\eta_t\Pi^\bot_t(Z_t)+\int_EU_t(e)\gamma_t(e)\zeta_t(e)\lambda({\rm d}e)\Big), 
\end{equation}
where $\bar{C}_t= \left\lbrace(\gamma,\eta)\in L^2(\lambda_t)\times\RR^d\,\Big\lvert\, \gamma\ge-1,\ \eta\in \mathrm{Ker}\ \sigma_t,\ \Vert \gamma\Vert^2_{L^2(\lambda_t)}+\abs{\eta}^2_{\RR^d}\le K^2_t-\abs{ \varphi_t}^2_{\RR^d}\right\rbrace$
is the closure of $C_t$ in $L^2(\lambda_t)\times\RR^d$ for any $t\le T$.
\end{lemma}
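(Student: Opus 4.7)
My approach is to solve the optimization pointwise in $(\omega,t)$ by exploiting the Hilbert-space structure of $L^2(\lambda_t)\times\RR^d$, and then to lift the pointwise maximizer to a predictable (respectively $\widetilde{\mathcal{P}}$-measurable) selector via a standard measurable-selection argument.

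For each fixed $(\omega,t)$ the linear functional
\[
\Phi_t(\gamma,\eta):=\eta\Pi^\bot_t(Z_t)+\int_E U_t(e)\,\gamma(e)\,\zeta_t(e)\,\lambda({\rm d}e)
\]
is continuous on the Hilbert space $L^2(\lambda_t)\times\RR^d$ by Cauchy--Schwarz, hence weakly continuous. The set $\bar{C}_t$ is closed, convex and bounded in $L^2(\lambda_t)\times\RR^d$, so it is weakly compact by reflexivity. Therefore the set $M_t(\omega)\subset\bar{C}_t$ of maximizers is non-empty, convex and weakly closed. To make this explicit, I would decouple the problem: writing $R_t^2:=K_t^2-|\varphi_t|^2>0$ and using that $\Pi^\bot_t(Z_t)\in\mathrm{Ker}\,\sigma_t$, the supremum over $\eta\in\mathrm{Ker}\,\sigma_t$ with $|\eta|^2\le R_t^2-\lVert\gamma\rVert^2_{L^2(\lambda_t)}$ is attained at
\[
\eta^*(\gamma)=\sqrt{R_t^2-\lVert\gamma\rVert^2_{L^2(\lambda_t)}}\,\frac{\Pi^\bot_t(Z_t)}{|\Pi^\bot_t(Z_t)|}
\]
(understood as $0$ when $\Pi^\bot_t(Z_t)=0$). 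The problem then reduces to the strictly concave maximization in $\gamma$ alone,
\[
\max\Bigl\{\langle U_t,\gamma\rangle_{L^2(\lambda_t)}+\sqrt{R_t^2-\lVert\gamma\rVert^2_{L^2(\lambda_t)}}\,|\Pi^\bot_t(Z_t)|\,:\,\gamma\ge-1,\;\lVert\gamma\rVert^2_{L^2(\lambda_t)}\le R_t^2\Bigr\},
\]
whose unique solution $\bar\gamma_t$ can be characterized by KKT conditions as the $L^2(\lambda_t)$-projection of a suitable multiple of $U_t$ onto the closed convex cone $\{\gamma\ge-1\}$ in $L^2(\lambda_t)$.

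The main obstacle is the measurable-selection step, which requires some care because the reference space $L^2(\lambda_t)$ depends on $(\omega,t)$ through $\zeta_t$. Under the standing assumptions, $K,\varphi,\sigma,\zeta$ are predictable and $Z,U$ are $\mathcal{P}$- respectively $\widetilde{\mathcal{P}}$-measurable, so all data entering the optimization depend measurably on $(\omega,t)$ and the correspondence $(\omega,t)\mapsto M_t(\omega)$ has measurable graph with closed convex values. Applying the Kuratowski--Ryll-Nardzewski theorem, or alternatively deriving measurability directly from the explicit projection formula described above, then produces a predictable $\bar\eta$ and a $\widetilde{\mathcal{P}}$-measurable $\bar\gamma$ satisfying \eqref{eq:OptPBSR} $\PP\otimes{\rm d}t$-a.e.
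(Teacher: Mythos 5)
Your overall route is the natural one and, in outline, matches the argument the paper defers to (\cite[Lemmas~2.14, 2.22]{KentiaPhD}): pointwise in $(\omega,t)$ the objective $(\gamma,\eta)\mapsto\eta\,\Pi^\bot_t(Z_t)+\langle U_t,\gamma\rangle_{L^2(\lambda_t)}$ is a continuous, hence weakly continuous, linear functional on $L^2(\lambda_t)\times\RR^d$ (note $U_t\in L^2(\lambda_t)$ for a.e.\ $(\omega,t)$ since $U\in\LL^2(\widetilde{\mu})$), and $\bar{C}_t$ is closed, convex and bounded, hence weakly compact, so maximizers exist; eliminating $\eta$ via $\eta^*(\gamma)$ is also fine because $\Pi^\bot_t(Z_t)\in\mathrm{Ker}\,\sigma_t$. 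Two slips are harmless but should be fixed: the reduced problem is concave but not strictly concave in general (if $\Pi^\bot_t(Z_t)=0$ the objective is linear in $\gamma$, and e.g.\ for $U_t=0$ there is a continuum of maximizers), so the claimed uniqueness can fail; and $\{\gamma\ge-1\}$ is a closed convex set but not a cone, and the KKT/projection characterization is only heuristic as written. Neither matters for the statement, which asserts existence of a measurable maximizing selection, not uniqueness.

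The genuine gap is the measurable-selection step, which is the actual content of the lemma and which you dispatch in one sentence. Kuratowski--Ryll-Nardzewski, as invoked, does not directly apply: the ambient Hilbert space $L^2(\lambda_t)$ varies with $(\omega,t)$, so there is no fixed Polish space in which you have exhibited $(\omega,t)\mapsto \bar{C}_t$ (or the argmax correspondence $M_t$) as a measurable, closed-valued correspondence. To complete the proof one must (i) embed all the sets into one fixed Polish space, e.g.\ $L^0(\mathcal{B}(E),\lambda)\times\RR^d$ with convergence in measure, checking that $\bar{C}_t$ remains closed there (Fatou for the norm constraint, a.e.\ subsequences for $\gamma\ge-1$) and that the objective is still sequentially upper semicontinuous on it (convergence in measure plus the $L^2(\lambda_t)$-bound gives weak $L^2(\lambda_t)$-convergence); (ii) actually verify measurability of the graph of the argmax correspondence jointly in $(\omega,t)$ and the variable, e.g.\ via a Castaing representation or a normal-integrand argument in the spirit of \cite{Rockafellar}, which is the selection machinery the paper itself uses elsewhere, before an Aumann--von Neumann or KRN-type theorem can be applied; and (iii) pass from a measurable $L^0$-valued selector $(\omega,t)\mapsto\bar{\gamma}_t(\omega)$ to a single $\widetilde{\mathcal{P}}$-measurable function $(\omega,t,e)\mapsto\bar{\gamma}_t(\omega,e)$, which is an additional (standard but not automatic) representation step. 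None of this is fatal---the plan can be completed exactly along these lines, or alternatively by making the truncation formula $\bar{\gamma}=\max(cU_t,-1)$ with a measurably chosen scalar $c$ rigorous and handling the degenerate cases---but as written the proposal stops where the real work of the lemma begins.
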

To $(\bar{\gamma},\bar{\eta})\in\bar{C}$ of Lemma \ref{lem:OptProblemExist}, we associate a probability measure $\bar{\QQ}\ll\PP$ defined via ${{\rm d}\bar{\QQ}}= \mathcal{E}\left((-\varphi+\bar{\eta})\mal B + \bar{\gamma}\ast\widetilde{\mu}\right)\,{\rm d}\PP$,
which may not be equivalent to $\PP$ as $\bar{\gamma}$ may be $-1$ on a non-negligible set. While $\bar{\QQ}$ might not be in $\mathcal{Q}^{\textrm{ngd}}$
it belongs to the $L^1(\PP)$-closure of $\mathcal{Q}^{\textrm{ngd}}$ in general,
as shown in
\begin{lemma}\label{lem:QbarApprox}
For $Z\in \mathcal{L}^2(B)$ and $U\in \mathcal{L}^2(\widetilde{\mu})$, let $(\bar{\gamma},\bar{\eta})$ be as in Lemma \ref{lem:OptProblemExist}. Define the measures $\bar{\QQ}\ll\PP$ via ${{\rm d}\bar{\QQ}}= \mathcal{E}\big((-\varphi+\bar{\eta})\mal B + \bar{\gamma}\ast\widetilde{\mu}\big) \,{{\rm d}\PP}$
and $\QQ^n := (1/n)\widehat{\PP} + (1-1/n)\bar{\QQ}$ for $n\in\NN$. Then the densities ${\rm d}\QQ^n/{\rm d}\PP$ of the sequence $\left(\QQ^n\right)_{n\in\NN}$ in $ \mathcal{Q}^{\textrm{ngd}}$  converge to the one of $\bar{\QQ}$ in $L^1(\PP)$ for $n\to\infty$. Consequently,
$\pi^u_t(X)\ge \EE_{\bar{\QQ}}(X|\FF_t)$ holds for all $ t\le T$.
\end{lemma}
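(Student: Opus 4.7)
I would address the three embedded claims — $\QQ^n\in\mathcal{Q}^{\textrm{ngd}}$, $L^1(\PP)$-convergence of the densities, and the resulting lower bound on $\pi^u_t(X)$ — in that order, the last being a corollary of the first two. For the membership claim, the idea is to compute the Girsanov kernel of $\QQ^n$ relative to $\PP$ explicitly. Let $\widehat{Z}:=\mathcal{E}(-\varphi\mal B)$ and $\bar{Z}:=\mathcal{E}((-\varphi+\bar{\eta})\mal B+\bar{\gamma}\ast\widetilde{\mu})$ denote the density processes of $\widehat{\PP}$ and $\bar{\QQ}$; these are genuine $\PP$-martingales ($\widehat{Z}$ by Novikov with bounded $\varphi$; $\bar{Z}$ by Example~\ref{exmartingale}.\ref{exmartingale1}). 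Since $\widehat{Z}>0$ everywhere and $\bar{Z}\ge 0$, the mixture $Z^n:=(1/n)\widehat{Z}+(1-1/n)\bar{Z}$ is strictly positive and admits a stochastic logarithm $N^n:=(Z^n_-)^{-1}\mal Z^n$. A direct SDE calculation from the exponential formulas for $\widehat{Z},\bar{Z}$ gives
\[
N^n = -\varphi\mal B + \alpha^n\bar{\eta}\mal B + (\alpha^n\bar{\gamma})\ast\widetilde{\mu},\qquad \alpha^n_t := \frac{(1-1/n)\bar{Z}_{t-}}{Z^n_{t-}}\in[0,1),
\]
the crucial strict bound $\alpha^n_t<1$ following from $Z^n_{t-}-(1-1/n)\bar{Z}_{t-} = (1/n)\widehat{Z}_{t-}>0$. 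Thus the Girsanov kernel of $\QQ^n$ is $(\gamma^n,\eta^n):=(\alpha^n\bar{\gamma},\alpha^n\bar{\eta})$, which inherits $\eta^n\in\mathrm{Ker}\,\sigma$ from $\bar{\eta}$, satisfies $\gamma^n>-1$ thanks to $\alpha^n<1$ even when $\bar{\gamma}=-1$, and obeys $\|\gamma^n\|^2_{L^2(\lambda_t)}+|\eta^n|^2 = (\alpha^n)^2(\|\bar{\gamma}\|^2+|\bar{\eta}|^2)<K_t^2-|\varphi_t|^2$ by $(\bar{\gamma},\bar{\eta})\in\bar{C}$. Hence $(\gamma^n,\eta^n)\in C$, i.e.\ $\QQ^n\in\mathcal{Q}^{\textrm{ngd}}$.

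The $L^1(\PP)$-convergence is then immediate from linearity: $\EE_\PP[|Z^n_T-\bar{Z}_T|]=(1/n)\EE_\PP[|\widehat{Z}_T-\bar{Z}_T|]\le 2/n$. For the final inequality, $\QQ^n\in\mathcal{Q}^{\textrm{ngd}}$ gives $\pi^u_t(X)\ge \EE_{\QQ^n}[X|\FF_t] = \EE_\PP[XZ^n_T|\FF_t]/Z^n_t$ $\PP$-a.s.\ by Bayes' rule. Boundedness of $X$ combined with $Z^n_T\to\bar{Z}_T$ in $L^1(\PP)$ yields $\EE_\PP[XZ^n_T|\FF_t]\to\EE_\PP[X\bar{Z}_T|\FF_t]$ in $L^1(\PP)$, while $Z^n_t\to\bar{Z}_t$ pointwise. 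Extracting an a.s.-convergent subsequence and restricting to the full-$\bar{\QQ}$-measure set $\{\bar{Z}_t>0\}$ yields $\EE_{\QQ^n}[X|\FF_t]\to\EE_{\bar{\QQ}}[X|\FF_t]$, and passing to the limit in $\pi^u_t(X)\ge\EE_{\QQ^n}[X|\FF_t]$ produces the claim.

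I expect the delicate point to be the first step, specifically verifying that the kernel of the mixture lies strictly inside $C$ and not merely in its closure $\bar{C}$, where $(\bar{\gamma},\bar{\eta})$ itself sits. The structural resolution is that the strictly positive admixture $(1/n)\widehat{Z}$ keeps $\alpha^n$ uniformly bounded away from $1$, which upgrades all the defining inequalities of $C$ (the lower bound $\gamma>-1$ and the norm bound $\|(\gamma,\eta)\|^2+|\varphi|^2<K^2$) from weak to strict, regardless of whether $(\bar{\gamma},\bar{\eta})$ touches the boundary of $\bar{C}$. This is precisely why one approximates $\bar{\QQ}$ through $\mathcal{Q}^{\textrm{ngd}}$ rather than attempting to show $\bar{\QQ}\in\mathcal{Q}^{\textrm{ngd}}$ directly.
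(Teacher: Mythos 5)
Your proof is correct and follows essentially the same route as the paper: write the mixed density $Z^n=(1/n)\widehat{Z}+(1-1/n)\bar{Z}$, identify via It\^o/stochastic logarithm the Girsanov kernel $(\gamma^n,\eta^n)=(\alpha^n\bar{\gamma},\alpha^n\bar{\eta})$ with $\alpha^n\in[0,1)$ strictly below $1$ because $\widehat{Z}>0$, conclude $(\gamma^n,\eta^n)\in C$ so $\QQ^n\in\mathcal{Q}^{\mathrm{ngd}}$, and then obtain the $L^1(\PP)$-convergence and the bound on $\pi^u_t(X)$ exactly as in the paper (your Bayes-rule/subsequence argument on $\{\bar{Z}_t>0\}$ just makes explicit what the paper leaves as "straightforward"). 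One small quibble: the martingale property of $\bar{Z}$ is not covered by Example~\ref{exmartingale}.\ref{exmartingale1}, which requires $\bar{\gamma}>-1$, whereas here $\bar{\gamma}$ may equal $-1$; this is anyway part of the setup preceding the lemma (where $\bar{\QQ}$ is defined as a probability measure, e.g.\ via the L\'epingle--M\'emin criterion of Example~\ref{exmartingale}.\ref{exmartingale3}), not something the lemma's proof must establish.
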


\begin{proof}
Let $n\in \NN$. Clearly $\QQ^n\approx\PP$. Moreover ${{\rm d}\QQ^n}/{{\rm d}\PP}=Z^n:= (1/n)\widehat{Z} + (1-1/n)\bar{Z}$ with $\widehat{Z}:={{\rm d}\widehat{\QQ}}/{{\rm d}\PP}=\mathcal{E}(-\varphi\mal B)$ and $\bar{Z}:={{\rm d}\bar{\QQ}}/{{\rm d}\PP}$.
It\^o formula then yields $Z^n =\mathcal{E}\big((-\varphi+\eta^n)\mal B+\gamma^n\ast\widetilde{\mu}\big)$ for $\eta^n=\alpha\bar{\eta}$ being predictable and $\gamma^n = \alpha\bar{\gamma}$ is $\widetilde{\mathcal{P}}$-measurable
with $\alpha = {(1-1/n)(\bar{Z}}/{Z^n})\in[0,1)$ thanks to $\widehat{Z}>0$. Therefore $\eta^n\in\mathrm{Ker}\ \sigma$ and $\gamma^n>-1$ due to $\bar{\gamma}\ge-1$. Hence $(\eta^n,\gamma^n)\in C$ and so
$\QQ^n =\QQ^{\gamma^n,\eta^n}$ is in $\mathcal{Q}^{\textrm{ngd}}$. Convergence of $\QQ^n$ to $\bar{\QQ}$ in $L^1(\PP)$ as $n\to\infty$ is straightforward by definition of $\QQ^n$ and this implies $\pi^u_t(X)\ge \EE_{\bar{\QQ}}(X|\FF_t)$ for all $ t\le T$.
\end{proof}
The dynamic good-deal bound $\pi^u(X)$ of $X\in L^\infty(\PP )$ is given by the solution to the JBSDE
\begin{equation}\label{eq:BSDENGD-ExampleSR}
-{\rm d}Y_t =   \Big((-\varphi_t+\bar{\eta}_t)Z_t+\int_EU_t(e)\bar{\gamma}_t(e)\zeta_t(e)\lambda({\rm d}e)\Big){\rm d}t
 -Z_t{\rm d}B_t - \int_E U_t(e)\widetilde{\mu}({\rm d}t,{\rm d}e),\quad t\in[0,T],
\end{equation}
for terminal condition $Y_T=X$, with $\bar{\gamma}=\bar{\gamma}(Z,U)$, $\bar{\eta}=\bar{\eta}(Z,U)$ given by Lemma~\ref{lem:OptProblemExist},
 according to
\begin{theorem}\label{thm:BSDEEssSupHasSol}
 For $X\in L^\infty(\PP )$, the JBSDE 
above
with $(\bar{\gamma},\bar{\eta})$ from (\ref{eq:OptPBSR})  has a unique solution $(Y,Z,U)$ in $\set$. Moreover there exists
  $\bar{\QQ}\ll \PP$
in the $L^1$-closure of $\mathcal{Q}^{\textrm{ngd}}$ (cf.\ Lemma~\ref{lem:QbarApprox}), with density ${{\rm d}\bar{\QQ}}/{{\rm d}\PP}= \mathcal{E}\left((-\varphi+\bar{\eta})\mal B + \bar{\gamma}\ast\widetilde{\mu}\right)$ such that the good-deal bound
satisfies
\begin{equation}\label{eq:ResultNGDBoundfromCompThm}
 \pi^u_t(X) = \esssup_{\QQ\in \mathcal{Q}^{\textrm{ngd}}}\,\EE_{\QQ}(X|\FF_t)=Y_t=\EE_{\bar{\QQ}}(X|\FF_t)\quad \textrm{ for } t\le T.
\end{equation}
\end{theorem}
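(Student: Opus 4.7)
The plan is to prove the theorem in three stages: first establish wellposedness of the JBSDE via Lipschitz tools, then verify the comparison hypothesis (\ref{gamma**}) of Proposition~\ref{comparegeneral} to sandwich $Y$ from above by $\pi^u(X)$ via the linear BSDEs indexed by $\mathcal{Q}^{\textrm{ngd}}$, and finally identify $Y$ with $\EE_{\bar{\QQ}}(X|\FF_t)$ by a Girsanov transform to $\bar{\QQ}$, which together with Lemma~\ref{lem:QbarApprox} closes the sandwich. Exploiting the measurable selection (\ref{eq:OptPBSR}), the generator of (\ref{eq:BSDENGD-ExampleSR}) admits the equivalent representation
\begin{equation*}
f_t(z,u) \;=\; -\varphi_t z \;+\; \sup_{(\gamma,\eta)\in\bar{C}_t}\Bigl(\eta\Pi^\bot_t(z) + \int_E u(e)\gamma(e)\zeta_t(e)\lambda({\rm d}e)\Bigr),
\end{equation*}
a supremum of linear functions of $(z,u)$ over the closed bounded set $\bar{C}_t$. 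Hence $f$ is independent of $y$, Lipschitz in $z$ and in $u$ (for the norm $|\cdot|_t$ of (\ref{utnorm})) with a deterministic constant controlled by $|\varphi|_\infty$ and $\lVert K\rVert_\infty$, and satisfies $f_t(0,0)=0$. Classical Lipschitz JBSDE theory then produces a unique $\mathcal{S}^2\times\mathcal{L}^2(B)\times\mathcal{L}^2(\widetilde{\mu})$-solution $(Y,Z,U)$; the upgrade to $Y\in\mathcal{S}^\infty$, the BMO-properties, and uniqueness in $\set$ will follow a posteriori from the next stage.

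For the comparison step, given two solution candidates $(Y^i,Z^i,U^i)_{i=1,2}$ I would pick as selector $\overline{\gamma}_t(e)$ the $\gamma$-component from Lemma~\ref{lem:OptProblemExist} applied at $(Z^2_t,U^1_t)$, which is $\widetilde{\mathcal{P}}$-measurable. Since the supremum at $(Z^2_t,U^2_t)$ dominates the affine value computed with that same optimizer, this yields
\begin{equation*}
f_t(Z^2_t,U^1_t) - f_t(Z^2_t,U^2_t) \;\le\; \int_E \overline{\gamma}_t(e)\bigl(U^1_t(e)-U^2_t(e)\bigr)\zeta_t(e)\,\lambda({\rm d}e),
\end{equation*}
and since $\overline{\gamma}\in[-1,K_t]$ is bounded with essentially bounded $\int_E|\overline{\gamma}_t|^2\zeta_t\,{\rm d}\lambda\le K_t^2$, $\overline{\gamma}\ast\widetilde{\mu}$ is a BMO martingale and Example~\ref{exmartingale}.\ref{exmartingale3} makes $\mathcal{E}(\int\beta\,{\rm d}B+\overline{\gamma}\ast\widetilde{\mu})$ a true martingale for every bounded predictable $\beta$. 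Thus (\ref{gamma**}) holds; comparing $Y$ via Proposition~\ref{comparegeneral} with the trivial solutions $(\pm|X|_\infty,0,0)$ of the JBSDE $(\pm|X|_\infty,f)$ (admissible because $f_t(0,0)=0$) yields $|Y|\le|X|_\infty$, so $Y\in\mathcal{S}^\infty$ and Lemma~\ref{BMOproperty} delivers the BMO-properties, while Proposition~\ref{comparegeneral} applied to any two candidate solutions forces uniqueness in $\set$. For each $(\gamma,\eta)\in C$, the linear generator $f^{\gamma,\eta}_t(z,u):=(-\varphi_t+\eta_t)z + \int_E u\gamma_t\zeta_t\,{\rm d}\lambda$ satisfies $f^{\gamma,\eta}\le f$ pointwise with unique bounded solution $Y^{\gamma,\eta}_t=\EE_{\QQ^{\gamma,\eta}}(X|\FF_t)$ by Lemma~\ref{lem:RepSolLinBSDEs}, so comparison gives $\pi^u_t(X)\le Y_t$.

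For the converse, boundedness of $\bar{\eta}$ and of $\bar{\gamma}\in[-1,K]$ with essentially bounded $\int_E|\bar{\gamma}_t|^2\zeta_t\,{\rm d}\lambda$ makes $\mathcal{E}((-\varphi+\bar{\eta})\mal B+\bar{\gamma}\ast\widetilde{\mu})$ a nonnegative true martingale (Example~\ref{exmartingale}.\ref{exmartingale3}) defining $\bar{\QQ}\ll\PP$ as in Lemma~\ref{lem:QbarApprox}. Under the Girsanov transform to $\bar{\QQ}$, the process $B^{\bar{\QQ}}:=B+\int_0^\cdot(\varphi_s-\bar{\eta}_s)\,{\rm d}s$ is a Brownian motion and $\mu$ has $\bar{\QQ}$-compensator $(1+\bar{\gamma})\zeta\,{\rm d}\lambda\,{\rm d}t$; substituting into (\ref{eq:BSDENGD-ExampleSR}) the drift cancels by the very definition of $(\bar{\gamma},\bar{\eta})$, leaving
\begin{equation*}
-{\rm d}Y_t \;=\; -Z_t\,{\rm d}B^{\bar{\QQ}}_t - \int_E U_t(e)\,\widetilde{\mu}^{\bar{\QQ}}({\rm d}t,{\rm d}e).
\end{equation*}
Since $Y$ is bounded, this local $\bar{\QQ}$-martingale is a true martingale, giving $Y_t=\EE_{\bar{\QQ}}(X|\FF_t)$; combined with $\pi^u_t(X)\ge\EE_{\bar{\QQ}}(X|\FF_t)$ from Lemma~\ref{lem:QbarApprox}, the sandwich $\pi^u_t(X)\le Y_t=\EE_{\bar{\QQ}}(X|\FF_t)\le\pi^u_t(X)$ closes and yields (\ref{eq:ResultNGDBoundfromCompThm}). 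I expect the hardest point to be the comparison step: $\bar{\gamma}$ may attain the value $-1$ on sets of positive measure, so the strict $({\rm \bf A}_{\bm \gamma})$-condition of \cite{Royer06} is unavailable, and it is precisely the more general martingale form of (\ref{gamma**}) in Proposition~\ref{comparegeneral} (activated via Example~\ref{exmartingale}.\ref{exmartingale3}) that makes the argument go through.
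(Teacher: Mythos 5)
Your proposal is correct and essentially reproduces the paper's argument: Lipschitz wellposedness for the supremum generator, comparison via Proposition~\ref{comparegeneral} against the linear generators $f^{\gamma,\eta}$ (with the martingale requirement secured through Example~\ref{exmartingale}, which tolerates $\gamma$-values equal to $-1$, precisely the point you flag) to obtain $\pi^u(X)\le Y$, identification $Y_t=\EE_{\bar{\QQ}}(X|\FF_t)$ (the paper via Lemma~\ref{lem:RepSolLinBSDEs}, you via an equivalent direct Girsanov computation under $\bar{\QQ}$), and Lemma~\ref{lem:QbarApprox} for the reverse inequality. The only step to tighten is your a posteriori bound $|Y|\le|X|_{\infty}$ obtained by comparing the (so far only $\mathcal{S}^2$-) solution with the constants $\pm|X|_{\infty}$: Proposition~\ref{comparegeneral} as stated assumes bounded solutions, so you should either invoke part 2 of the remark following it (here $\mathcal{E}(\beta\mal B+\overline{\gamma}*\widetilde{\mu})$ is indeed in $\mathcal{S}^2$, since $\beta$ is bounded, $\langle\overline{\gamma}*\widetilde{\mu}\rangle_T$ is bounded and the jumps are $\ge-1$) or, as the paper does, get $Y\in\mathcal{S}^{\infty}$ directly from the classical a-priori estimates for Lipschitz JBSDEs with bounded terminal condition and $f_\cdot(0,0)=0$.
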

\begin{proof}
Consider the family of BSDE generator functions defined for $(z,u)\in\RR^d\times L^2(\zeta_\cdot {\rm d}\lambda)$ by $f^{(\gamma,\eta)}(\cdot,z,u) := (-\varphi_\cdot+\eta_\cdot)z+ \int_Eu(e)\gamma_\cdot(e)\zeta_\cdot(e)\lambda({\rm d}e)$ 
and $f^{(\gamma,\eta)}(\cdot,z,u):=0$ elsewhere, 
for $(\gamma,\eta)\in\bar{C}$, where coefficients $\left(\gamma_t(\omega),-\varphi_t(\omega)+\eta_t(\omega)\right)$ of $f^{(\gamma,\eta)}$ are bounded in $L^2(\lambda_t(\omega))\times\RR^d$ by 
$K_f:=\Vert K\Vert_\infty\in (0,\infty)$ for all $(\gamma,\eta)$ and $(t,\omega)$. 
By Lemma \ref{lem:OptProblemExist}, a classical generator function $f$ for the JBSDE (\ref{eq:BSDENGD-ExampleSR}) can be defined such that 
  ($\PP\otimes {\rm d}t$-a.e.)  $f(\cdot,z,u)=\esssup_{(\gamma,\eta)\in \bar{C}}f^{(\gamma,\eta)}(\cdot,z,u)$ for all $(z,u)\in\RR^d\times L^2(\zeta_\cdot {\rm d}\lambda)$ 
and $f$ is (a.e.) Lipschitz continuous in $(z,u)\in\RR^d\times L^2(\lambda_t(\omega)),$ 
with Lipschitz constant $K_f$.  
Indeed, such generator function $f$ can be defined at first (up to a $\PP\otimes {\rm d}t$-nullset) for countably many  $(z,u)$ with $z\in\QQ^d$ and $u\in\{u^n,\, n\in\NN\}$ dense 
subset of $L^2(\lambda)$ and,
noting that $u\,\zeta_t(\omega)^{1/2}$ is in $L^2(\lambda)$ for $u$ in $L^2(\lambda_t(\omega))$,
by  Lipschitz-continuous extension  for all  $(z,u)\in \RR^d\times L^2(\lambda_t(\omega))$. 
By setting $f(t,z,u):=0$  elsewhere (for $u\in L^0(\mathcal{B}(E),\lambda)\setminus L^2(\lambda_t(\omega))$), one 
can define $f$ as Lipschitz continuous even for $(z,u)\in \RR^d\times   L^0(\mathcal{B}(E),\lambda)$.

By classical theory for Lipschitz-JBSDE, equation (\ref{eq:BSDENGD-ExampleSR})
thus has a unique solution $(Y,Z,U)$ in $\mathcal{S}^2\times\mathcal{L}^2(B)\times\mathcal{L}^2(\widetilde{\mu})$ 
which by boundedness of X satisfies $Y\in\mathcal{S}^\infty$ (cf.\ e.g.\ \cite[Prop.3.2-3.3]{Becherer06}). Note that for all $(\gamma,\eta)\in\bar{C}$, clearly $\beta :=-\varphi+\eta$ is bounded and $\int_E\lvert \gamma_t(e)\rvert^2\zeta_t(e)\lambda({\rm d}e)$ is
 bounded uniformly in $t\le T$. Hence by Lemma \ref{lem:RepSolLinBSDEs}, the BSDEs with generators $f^{\gamma,\eta}$ also have unique solutions $(Y^{\gamma,\eta},Z^{\gamma,\eta},U^{\gamma,\eta})\in\set$, which satisfy
$Y^{\gamma,\eta}_t=\EE_{\QQ^{\gamma,\eta}}(X|\FF_t)$, $ \QQ^{\gamma,\eta}\textrm{-a.s.,}\ t\le T$. Since $f=f^{\bar{\gamma},\bar{\eta}}$, we also have $Y_t = \EE_{\bar{\QQ}}(X|\FF_t)$, $\bar{\QQ}\textrm{-a.s.}$.
By Lemma \ref{lem:QbarApprox} holds $\pi^u_t(X)\ge \EE_{\bar{\QQ}}(X|\FF_t)$, $ \bar{\QQ}\textrm{-a.s.}$, for all $ t\le T$. To complete the proof, we  show that
$\pi^u_t(X)\le Y_t$. For all $(\gamma,\eta)\in C$ (defining $\QQ^{\gamma,\eta}\in\mathcal{Q}^{\textrm{ngd}}$) we have
that $
 f_t(Z_t,U_t)=f^{\bar{\gamma},\bar{\eta}}_t(Z_t,U_t) $ dominates $f^{\gamma,\eta}_t(Z_t,U_t)$  for a.e.\ $t\le T$.
Noting that $f^{\gamma,\eta}$ are Lipschitz in $(z,u)$ 
with (uniform) Lipschitz constant $K_f$ 
and
\begin{equation}\label{eq:IneqonGenwithGammadiffFromRoyer}
 f^{\gamma,\eta}_t(Z^{\gamma,\eta}_t,U_t) -f^{\gamma,\eta}_t(Z^{\gamma,\eta}_t,U^{\gamma,\eta}_t) =\int_E\gamma_t(e)(U_t(e)-U^{\gamma,\eta}_t(e))\zeta_t(e)\lambda({\rm d}e),\quad t\le T\,,                             
\end{equation}
with $\mathcal{E}\left((-\varphi+\eta)\mal B+\gamma\ast\widetilde{\mu}\right)$ being a martingale (see Example \ref{exmartingale}), one can apply comparison as in
Proposition \ref{comparegeneral} to get $Y_t\ge Y^{\gamma,\eta}_t$, $\PP\textrm{-a.s.},\ t\le T,\ (\gamma,\eta)\in C$.
Hence $Y_t\ge \esssup_{(\gamma,\eta)}Y^{\gamma,\eta}_t=\pi^u_t(X)$, $t\le T$, for $(\gamma,-\varphi+\eta)$ ranging over all Girsanov kernels of measures $\QQ\in\mathcal{Q}^{\textrm{ngd}}$.
\end{proof}

\small
%\bibliography{arxiv-Monotone-Stability-approach-JBSDE_BechererBueKentia}
%\bibliographystyle{alpha}

%copied from bbl-file

\end{document}